\documentclass[hidelinks,onefignum,onetabnum]{siamart250211}


\usepackage{lipsum}
\usepackage{xurl}
\usepackage{amsfonts}
\usepackage{graphicx}
\usepackage{epstopdf}
\usepackage{amsmath,amstext,amssymb}
\usepackage{mathrsfs}
\usepackage{booktabs}
\usepackage{tabularx}
\usepackage{bm}
\usepackage{multirow}
\usepackage[algo2e, ruled, linesnumbered]{algorithm2e}
\ifpdf
\DeclareGraphicsExtensions{.eps,.pdf,.png,.jpg}
\else
\DeclareGraphicsExtensions{.eps}
\fi


\newtheorem{example}{Example}
\newsiamremark{remark}{Remark}
\newsiamremark{hypothesis}{Hypothesis}
\crefname{hypothesis}{Hypothesis}{Hypotheses}
\newsiamthm{claim}{Claim}
\newsiamremark{fact}{Fact}
\crefname{fact}{Fact}{Facts}

\headers{XI-PINN for moving interface problems}{Ran Bi, Weibing Deng, and Yameng Zhu}

\title{Extended Interface Physics-Informed Neural Networks Method for Moving Interface Problems\thanks{Submitted to the editors DATE.
		\funding{This work was 
			partially supported by the NSF of China grant  12171237, and by the
			Ministry of Science and Technology of China grant 2020YFA0713800.}}}

\author{Ran Bi\thanks{School of Mathematics, Nanjing University, Nanjing 210093, People’s Republic of China
		{(\email{ranbi@smail.nju.edu.cn}, \email{wbdeng@nju.edu.cn}, \email{dg21210022@smail.nju.edu.cn}).}}\and Weibing Deng\footnotemark[2] \and Yameng Zhu\footnotemark[2]
}

\usepackage{amsopn}


\ifpdf
\hypersetup{
  pdftitle={XI-PINN},
  pdfauthor={Ran Bi and Weibing Deng and Weibing Deng}
}
\fi




\definecolor{gold}{rgb}{0,0,1}

\begin{document}

\maketitle

\begin{abstract}
	Physics-informed neural networks (PINNs) have emerged as an effective class of mesh-free methods for solving partial differential equations (PDEs), particularly on complex geometries. In this paper, we introduce an Extended Interface Physics-Informed Neural Network (XI-PINN) framework designed to solve parabolic moving interface problems. The proposed method employs a level set function—which can be either analytically prescribed or learned via a neural network—to capture the moving interface. Furthermore, we establish an a priori error analysis for the XI-PINN method and derive error bounds for the approximation. {Extensive numerical experiments are provided to validate the accuracy and robustness of the framework, and its applicability is further demonstrated by solving the Oseen equations.}
\end{abstract}

\begin{keywords}
Moving interface, PINNs, Extended variable technique, Error analysis
\end{keywords}

\begin{MSCcodes}
65D15, 65M12, 68T07, 68Q32
\end{MSCcodes}

\section{Introduction}
Interface problems arise in many scientific and engineering models that involve multiple materials with different chemical or physical properties (see~\cite{greengard1994numerical, sussman1999efficient} and references therein). In these models, the computational domain is composed of subdomains that are separated by smooth curves (or surfaces) known as interfaces. The interface geometry itself may be dynamic; that is, the interface between different subdomains varies in time. The low global regularity of the solutions and the evolving interface pose challenges for numerical simulations, particularly when the interface undergoes large deformations.

Let $\Omega \subset \mathbb{R}^d$, $d\in \mathbb{N}$, be a fixed bounded domain and let $\Omega^+(t)$ and $\Omega^-(t)$ be two subdomains of $\Omega$ separated by the evolving interface $\Gamma(t)$ on a time interval $[0,T_{\text{end}}]$. In this paper, we consider the following parabolic equation:
\begin{equation}\label{eq1.1}
	\left\{
	\begin{aligned}
		\partial_t u - \nabla \cdot \left( \beta \nabla u\right)  &= f \quad  &&\mathrm{in}\; \Omega = \Omega^+(t) \cup \Omega^-(t), \quad  t \in [0, T_{\text{end}}],  \\
		u\left( \cdot, t\right) &= g \quad  &&\mathrm{on}\; \partial \Omega, \quad t \in [0, T_{\text{end}}], \\
		u\left( \cdot, 0\right) &= u_0 \quad &&\mathrm{in}\; \Omega = \Omega^+(0) \cup \Omega^-(0),
	\end{aligned}\right.
\end{equation}
with the jump conditions on the moving interface $\Gamma(t)$:
\begin{align}
	[u]_{\Gamma(t)} &:= u^+|_{\Gamma(t)} - u^-|_{\Gamma(t)} = h_D, \quad t\in [0, T_{\text{end}}]  \label{eq1.4},\\
	[\beta \nabla u \cdot \mathbf{n}]_{\Gamma(t)}&:= \beta^+ \nabla u^+ \cdot \mathbf{n}|_{\Gamma(t)} - \beta^- \nabla u^- \cdot \mathbf{n}|_{\Gamma(t)} = h_N, \quad t\in [0, T_{\text{end}}]  \label{eq1.5},
\end{align}
where $f \in L^2(0,T_{\text{end}}; L^2(\Omega))$, and $\mathbf{n}$ is the unit normal vector to $\Gamma(t)$ pointing from $\Omega^-(t)$ to $\Omega^+(t)$. The restrictions of $u$ on $\Omega^+(t)$ and $\Omega^-(t)$ are denoted by $u^+ = u|_{\Omega^+(t)}$ and $u^- = u|_{\Omega^-(t)}$.
The coefficient function $\beta$ is a piecewise positive constant defined as follows:
\begin{equation}
	\beta(\bm{x}, t) = \begin{cases}
		\beta^+, & \text{for } \bm{x} \in \Omega^+(t), \\
		\beta^-, & \text{for } \bm{x} \in \Omega^-(t). 
	\end{cases}
	\label{eq1.6}
\end{equation}

Furthermore, we assume that there is a prescribed velocity field $\mathcal{V}(\bm{x}, t)$ that governs the movement of the interface, i.e.,
\begin{equation}
	\dfrac{\mathrm{d}\bm{x}}{\mathrm{d}t} = \mathcal{V}(\bm{x}, t), \quad \bm{x} \in \Gamma(t).  \label{eq1.7}
\end{equation}

The parabolic moving interface problem (\ref{eq1.1})--(\ref{eq1.7}) appears in many applications, such as the Stefan problem~\cite{chen1997simple} and the Burton--Cabrera--Frank-type model~\cite{caflisch2003analysis}. For instance, $u$ represents the temperature and $\mathcal{V}$ is determined by the heat flux across the interface in Stefan problems. Under appropriate assumptions on the initial, boundary, and jump conditions (see Section~\ref{sec3} for detailed discussions), it can be proved that Eqs.~(\ref{eq1.1})--(\ref{eq1.7}) admit a unique solution $u$ belonging to the space $L^2(0, T_{\text{end}}; H^1(\Omega))$.

It is well known that moving interface problems pose significant challenges for numerical simulations because the computational domain itself evolves. When employing traditional finite element methods (FEMs), one must ensure the generation of body-fitted meshes that conform to the interface; otherwise, the accuracy of the numerical solution will be severely degraded (cf.~\cite{babuvska2000can}). Chen and Zou~\cite{chen1997simple} studied the linear FEM on nearly fitted quasi-uniform meshes and proved optimal-order error estimates up to some logarithmic factors. In general, mesh regeneration should be minimized or even avoided whenever possible, as mesh generation is notoriously laborious and time-consuming, particularly for complex interface geometries and higher-dimensional PDEs. Consequently, arbitrary Lagrangian--Eulerian methods~\cite{lan2020finite} and space-time finite element methods~\cite{tezduyar1992new} were proposed to overcome this difficulty.

To completely eliminate the need for mesh generation in solving PDEs, unfitted methods have attracted significant attention over recent decades. Unfitted methods allow the interface to cut through mesh elements, while special techniques are required to incorporate the jump conditions across the interface. To handle interface-cutting elements, one class of methods enforces the jump conditions through computational schemes such as CutFEM~\cite{burman2015cutfem, chen2023arbitrarily} and IPFEM~\cite{huang2017unfitted}. Another class of methods constructs specialized basis functions within interface elements that are designed to satisfy the interface jump conditions, such as GFEM~\cite{babuvska1983generalized, zhang2022condensed}, XFEM~\cite{dolbow2001extended}, and IFEM~\cite{li1998immersed, adjerid2024high}. Guo~\cite{guo2021solving} presented the first discrete analysis and optimal error estimates for the backward Euler IFEM applied to parabolic moving interface problems. All the aforementioned methods are effective for solving interface problems and eliminate the need for body-fitted mesh regeneration at each time step. However, they still require either careful handling of small cut elements~\cite{chen2021adaptive} near the interface or the construction of specialized basis functions~\cite{adjerid2024high} within interface elements that satisfy the jump conditions.

Solving interface problems via deep learning methods has attracted extensive attention in recent years. As mesh-free methods, neural networks are particularly well suited for addressing problems in computational domains with complex geometries and higher-dimensional settings. Deep learning methods effectively mitigate the curse of dimensionality and eliminate the need for handling solution meshes. The well-known PINNs~\cite{raissi2019physics} and DeepONet~\cite{lu2021learning}, along with various improved methods~\cite{tseng2023cusp, hu2022discontinuity, wu2022inn, ying2024accurate, wu2024solving, bi2025xi}, have been successfully used to solve elliptic interface problems, yielding promising results. {To address more challenging moving interface problems, Wang et al.~\cite{wang2021deep} proposed a multi-network framework for Stefan problems. In their approach, two distinct networks approximate the temperature fields in the two phases, and an additional network captures the interface evolution, serving essentially to classify the domain points. Lin et al.~\cite{lin2025discontinuous} extended the discontinuous ELM method by advancing the solution in time with an implicit--explicit discretization scheme. PF-PINNs~\cite{qiu2022physics} were developed for two-dimensional immiscible incompressible two-phase flows.}

{
	In this work, we develop an improved PINN method, termed the Extended Interface PINN (XI-PINN), for solving moving interface problems. The method is built upon the extended variable technique (EVT)~\cite{tseng2023cusp, hu2022discontinuity}. Compared with existing numerical methods for moving interface problems, this approach offers the following main advantages:
	\begin{itemize}
		\item[(1)] As a meshfree method, it avoids the difficulties that traditional grid-based methods face when dealing with high-dimensional problems or large deformations of the interface geometry.
		
		\item[(2)] Unlike existing multi-network approaches~\cite{jagtap2020extended, wang2021deep}, XI-PINN treats both subdomains with a single neural network. This means the network can adaptively distribute its parameters according to the complexity of the solution in each region, leading to a more uniform error distribution across subdomains. In contrast, multi-network methods that assign independent networks to different subdomains may suffer from unsuitable a priori parameter allocation, which can degrade the accuracy of the coupled solution for interface problems.
		
		\item[(3)] By designing the extension variable, XI-PINN can accurately capture various types of interface conditions, circumventing the limitations inherent in diffuse interface models.
		
		\item[(4)] When the solution exhibits no jump across the interface, XI-PINN can exactly enforce the zero-jump condition~\cite{li2025continuity}. This is particularly important for two-phase flow problems, as it helps preserve the correct physical behavior.
	\end{itemize}
	
	One limitation of the EVT is that it presupposes an explicitly given level set function to describe the interface. In practice, obtaining this function typically requires solving a level set evolution equation, which is a nontrivial task when an analytical expression is unavailable. To overcome this difficulty, we represent the level set function using an additional neural network. By employing a tailored temporal training strategy, this network can accurately track the moving interface, even under large deformations.
	
	We further present a comprehensive error analysis for the XI-PINN framework. Following recent developments \cite{jiao2025drm, ma2020machine}, we decompose the total error into three distinct components: the approximation error associated with the neural network space, the statistical error, and the optimization error. The bounds for the approximation error are guaranteed by the universal approximation properties of deep neural networks \cite{guhring2021approximation}, while the optimization error vanishes in the over-parameterized regime, as established in the literature \cite{liu2022loss}. Consequently, our primary theoretical contribution lies in providing a detailed proof of the statistical error specifically tailored to parabolic moving interface problems, thereby completing the overarching analytical framework. 
	
	We evaluate the performance of XI-PINN through a series of numerical experiments, including high-dimensional problems, the Oseen equations, and test cases characterized by severe interface deformations. Leveraging second-order optimization algorithms, XI-PINN delivers highly accurate solutions. In cases where the exact solution is continuous across the interface, or when employing a small network architecture, XI-PINN exhibits better accuracy compared to multi-network approaches. Furthermore, for a benchmark problem with the interface condition $h_D=0$, we show the spectrum of the neural tangent kernel (NTK). The results indicate that XI-PINN yields a more favorable spectral distribution than the Vanilla-PINN, demonstrating that within a single-network framework, XI-PINN effectively mitigates the spectral bias of the Vanilla-PINN and successfully captures sharp interface features.
}

The rest of this paper is organized as follows. In Section~\ref{sec2}, we first present the fundamental framework of Deep Neural Networks (DNNs), then introduce the XI-PINN method and propose our novel approach for solving the level set function. In Section~\ref{sec3}, we conduct a detailed error analysis of the XI-PINN method. Numerical results are presented in Section~\ref{sec4} to demonstrate the accuracy and efficacy of XI-PINN. Finally, we give some concluding remarks in Section~\ref{sec5}.

\section{Extended Interface PINN Method}\label{sec2}
In this section, we first review preliminaries on DNNs and the EVT. Subsequently, we introduce the XI-PINN method for solving parabolic interface problems. This method efficiently handles problems with geometrically complex interfaces and can be seamlessly extended to higher-dimensional scenarios.

\subsection{DNNs} \label{sec2.1}
We consider a standard fully connected feedforward neural network consisting of multiple linear transformations and nonlinear activation functions. 
Let the network have depth $L \ge 2$, $L \in \mathbb{Z}$, and let $\{ n_l \}_{l=0}^L$ denote the layer dimensions, where $n_0$ is the input dimension and $n_L$ is the output dimension. For each layer $l = 1, \dots, L$, define an affine transformation $\mathbf{T}^l: \mathbb{R}^{n_{l-1}} \to \mathbb{R}^{n_l}$ as follows:
\begin{equation*}
	\mathbf{T}^l(\mathbf{X}^{l-1}):= \mathbf{A}^l \mathbf{X}^{l-1} + \mathbf{b}^l,
\end{equation*}
where $\mathbf{A}^l \in \mathbb{R}^{n_l \times n_{l-1}}$ is the weight matrix and $\mathbf{b}^l \in \mathbb{R}^{n_l}$ is the bias vector. The $l$-th hidden layer ($l = 1, \dots, L-1$) applies an element-wise nonlinear activation function $\sigma: \mathbb{R} \to \mathbb{R}$:
\begin{equation*}
	\mathbf{X}^{l} = f^{l}(\mathbf{X}^{l-1}) := \sigma\bigl( \mathbf{T}^l( \mathbf{X}^{l-1} ) \bigr).
\end{equation*}
The final layer ($l = L$) produces the output via a purely affine transformation without activation, i.e., $\mathbf{X}^L = \mathbf{T}^L ( \mathbf{X}^{L-1})$.
The complete $L$-layer DNN is then the composition
\begin{equation*}
	\mathcal{NN}(\bm{x}; \theta) := \bigl( \mathbf{T}^L \circ f^{L-1} \circ \cdots \circ f^1 \bigr) (\mathbf{X}^0), \quad \mathbf{X}^0 = \bm{x},
\end{equation*}
where $\bm{x} \in \mathbb{R}^{n_0}$ is the input of the neural network, and the set of trainable parameters $\{ \mathbf{A}^l, \mathbf{b}^l\}_{l=1}^L$ is collected into a parameter vector $\theta$. The integers $L$ and $W:=\max \{ n_l,\ l=1,\dots,L\}$ are called the depth and width of the DNN, respectively.

Following the definition in~\cite{hu2024solving}, we denote by $\mathcal{H}_\sigma(L,N_\theta,B_\theta)$ the collection of DNN functions of depth $L$, with at most $N_\theta$ nonzero trainable parameters, where each parameter is bounded in absolute value by $B_\theta$, i.e.,
\begin{align*}
	\mathcal{H}_\sigma( L,N_\theta,B_\theta) := \bigl\{ \text{$L$-layer } \mathcal{NN}(\bm{x}; \theta): |\theta|_{\ell^0} \le N_\theta,\ |\theta|_{\ell^\infty} \le B_\theta \bigr\}.
\end{align*}
Here $|\cdot|_{\ell^0}$ and $|\cdot|_{\ell^\infty}$ denote the number of nonzero entries and the maximum norm of a vector, respectively. For brevity, we often write $\mathcal{H}$ to mean $\mathcal{H}_\sigma(L,N_\theta,B_\theta)$.

\subsection{EVT}\label{sec2.2}
The EVT has demonstrated remarkable success in solving elliptic interface problems, enabling highly accurate approximations of solutions with low regularity across interfaces using a single neural network (see~\cite{tseng2023cusp, hu2022discontinuity, ying2024accurate, bi2025xi}).

We first introduce two important functions: the level set function $\phi: \Omega \times [0,T_{\text{end}}] \to \mathbb{R}$ and the indicator function $\chi: \Omega \times [0,T_{\text{end}}] \to \mathbb{R}$. The level set function $\phi$ characterizing the interface satisfies~\cite{osher2004level}
\begin{equation*}
	\phi(\bm{x}, t) \begin{cases}
		<0, & \text{if } \bm{x} \in \Omega^-(t), \\
		=0, & \text{if } \bm{x} \in \Gamma(t), \\
		>0, & \text{if } \bm{x} \in \Omega^+(t),
	\end{cases}
\end{equation*}
and the indicator function is defined as
\begin{equation*}
	\chi(\bm{x}, t)= \begin{cases}
		-1, & \text{if } \bm{x} \in \Omega^-(t), \\ 
		1,  & \text{if } \bm{x} \in \Omega^+(t).
	\end{cases}
\end{equation*}
It is clear that the indicator function can be written in terms of the level set function. Consequently, we define an extension function $\widetilde{u}(\bm{x}, t, z)$ on $\widetilde{\Omega}:= \Omega \times [0, T_{\text{end}}] \times \mathbb{R} \subset \mathbb{R}^{d+2}$ such that
\begin{equation} \label{eq2.5}
	\widetilde{u}(\bm{x}, t, z(\bm{x}, t)) = u(\bm{x}, t), \quad \bm{x} \in \Omega, \quad t\in[0,T_{\text{end}}],
\end{equation}
where $z$ represents the extension variable. Depending on the interface jump condition $h_D$, the extension variable is defined as
\begin{equation}
	z( \bm{x}, t)  = \begin{cases}
		\chi( \bm{x}, t), & \text{if } h_D \neq 0,\\[4pt]
		\Phi( \bm{x}, t):= | \phi( \bm{x}, t) |, & \text{if } h_D = 0,
	\end{cases}
\end{equation}
As demonstrated in~\cite{li2025continuity}, setting $z=\Phi(\bm{x}, t)$ for the case $h_D=0$ improves approximation accuracy, since the neural network component $u_{\mathcal{NN}} \in \mathcal{H}$ naturally enforces continuity across the entire domain $\Omega$. It can be observed that, by the definitions of $\widetilde{u}$ and the extended variable $z(\bm{x}, t)$, $\widetilde{u}$ naturally exhibits discontinuities or jumps in its derivatives at the interface $\Gamma(t)$.
\subsection{XI-PINN}
As discussed in Section~\ref{sec2.2}, although $u$ (or its derivative) is discontinuous across the interface, the EVT lifts it to a continuous function $\widetilde u$ defined on $\widetilde{\Omega} \subset \mathbb{R}^{d+2}$. We therefore approximate the solution of Eqs.~\eqref{eq1.1}--\eqref{eq1.7} by a DNN $\widetilde u_\theta\in\mathcal{H}$ with input dimension $d+2$, trained through the following discrete optimization problem:
\begin{equation}\label{eq2.7}
	\operatorname*{arg\,min}_{\widetilde{u}_{\theta} \in \mathcal{H}} \ell = \sum_{i=1}^{5} \ell_i(\widetilde{u}_{\theta}),
\end{equation}
with
{
\begin{align}
	\ell_1(\widetilde{u}_{\theta})  &= \dfrac{1}{N_\Omega} \sum_{\bm{s}_j \in \mathcal{S}_{\Omega}} \bigl| \partial_t \widetilde{u}_{\theta}(\bm{s}_j, z(\bm{s}_j))  -\nabla \cdot \bigl( \beta \nabla \widetilde{u}_{\theta}(\bm{s}_j, z(\bm{s}_j)) \bigr) - f(\bm{s}_j) \bigr|^2,  \label{eq2.8} \\
	\ell_2(\widetilde{u}_{\theta}) &= \dfrac{1}{N_{\partial \Omega}} \sum_{\bm{s}_j \in \mathcal{S}_{\partial \Omega}} \bigl| \widetilde{u}_{\theta}(\bm{s}_j, z(\bm{s}_j)) - g(\bm{s}_j) \bigr|^2,  \label{eq2.9} \\
	\ell_3(\widetilde{u}_{\theta}) &= \dfrac{1}{N_{\Omega_0}} \sum_{\bm{s}_j \in \mathcal{S}_{\Omega_0}} \bigl| \widetilde{u}_{\theta}(\bm{s}_j, z(\bm{s}_j)) - u_0(\bm{s}_j) \bigr|^2,  \label{eq2.10}\\
	\ell_4(\widetilde{u}_{\theta}) &= \dfrac{1}{N_{\Gamma}} \sum_{\bm{s}_j \in \mathcal{S}_{\Gamma}} \bigl| \bigl[ \beta \nabla \widetilde{u}_{\theta}(\bm{s}_j, z(\bm{s}_j)) \cdot \mathbf{n} \bigr]_{\Gamma(t)} - h_N(\bm{s}_j) \bigr|^2,  \label{eq2.12}\\
	\ell_5(\widetilde{u}_{\theta}) &= \dfrac{1}{N_{\Gamma}} \sum_{\bm{s}_j \in \mathcal{S}_{\Gamma}} \bigl| \bigl[ \widetilde{u}_{\theta}(\bm{s}_j, z(\bm{s}_j)) \bigr]_{\Gamma(t)} - h_D(\bm{s}_j) \bigr|^2, \label{eq2.11}
\end{align}
where $\mathcal{S}_\Omega$, $\mathcal{S}_{\partial\Omega}$, $\mathcal{S}_{\Omega_0}$, and $\mathcal{S}_\Gamma$ denote the sets of uniformly sampled collocation points within the respective domains, with $N_\Omega, N_{\partial \Omega}, N_{\Omega_0}$, and $N_{\Gamma}$ denoting the total number of points in each corresponding set.}

\begin{remark}
	When the interface jump condition satisfies $h_D=0$, the extension variable $z$ can be set to $z({\bm{x}},t) = \Phi({\bm{x}},t)$. Consequently, the $\ell_5$ term can be omitted from the optimization formulation~(\ref{eq2.7}). 
\end{remark}

\begin{remark} \label{remark2}
	For the derivative components within the discrete loss formulations (\ref{eq2.8}) and (\ref{eq2.12}), applying the chain rule yields the following three identities:
	\begin{itemize}
		\item[(1)] $\partial_t \widetilde{u}_\theta = D_t \widetilde{u}_\theta + D_z \widetilde{u}_\theta \, \partial_t z$;
		\item[(2)] $\nabla \widetilde{u}_\theta = \nabla_{\bm{x}}\widetilde{u}_\theta + D_z \widetilde{u}_\theta \, \nabla z$;
		\item[(3)] $\nabla \cdot \nabla \widetilde{u}_\theta = \Delta_{\bm{x}} \widetilde{u}_\theta + 2 \nabla z \cdot \nabla_{\bm{x}}( D_z \widetilde{u}_\theta) + |\nabla z|^2 D_z^2 \widetilde{u}_\theta + D_z \widetilde{u}_\theta \, \Delta z$.
	\end{itemize}
	Here, $D_t$ and $D_z$ denote the partial derivatives of $\widetilde{u}_\theta$ with respect to time $t$ and the extended variable $z$, respectively. Moreover, $\nabla_{\bm{x}} \widetilde{u}_\theta \in \mathbb{R}^{d}$ represents the spatial gradient of $\widetilde{u}_\theta$, and $\Delta_{\bm{x}}$ denotes the spatial Laplacian.
	
	{When $z = \chi(\bm{x},t)$ (i.e., $h_D \neq 0$), the derivatives of $z$ vanish almost everywhere; thus, the smoothness of the interface does not affect the numerical method. In contrast, setting $z = \Phi(\bm{x},t)$ (i.e., $h_D = 0$) requires the underlying level set function $\phi(\bm{x},t)$ to be twice differentiable with respect to $\bm{x}$ and differentiable with respect to $t$. Because the regularity of the interface $\Gamma(t)$ directly governs the regularity of the level set function, we assume $\Gamma(t)$ to be of class $C^2$ whenever $z = \Phi(\bm{x},t)$ is employed. Moreover, if the interface lacks smoothness, the exact solution $u$ may develop singularities at corners in the subdomains $\Omega^{\pm}$. Solving PDEs with such geometric singularities naturally poses well-known challenges for the PINNs \cite{hu2024solving}.}
\end{remark}

\subsection{Construction of the Level Set Function} \label{sec2.4}
The level set function plays a pivotal role in the EVT, as it determines the location of points relative to the subdomains, thereby defining the indicator function. Furthermore, when $h_D=0$, it is used to directly construct the extended variable. Consequently, obtaining the time-evolving level set function is essential for the proposed framework. Assuming the velocity field $\mathcal{V}$ is continuously extended to the entire domain $\Omega$ such that the trajectory of every point $\bm{x} \in \Omega$ is governed by the ODE (\ref{eq1.7}), the level set function $\phi(\bm{x}, t)$ satisfies the following advection equation:
\begin{align} \label{advection equation}
	\frac{\partial \phi(\bm{x}, t)}{\partial t} + \mathcal{V} \cdot \nabla \phi(\bm{x}, t) &= 0, \quad \bm{x} \in \Omega,\; t \in [0, T_{\text{end}}],\\
	\phi(\bm{x}, 0) &= \phi_0(\bm{x}), \quad \bm{x} \in \Omega,
\end{align}
where $\phi_0(\bm{x})$ denotes the prescribed initial level set function at $t=0$. The governing equation may be solved using the method of characteristics. For any point $\bm{x}_0 \in \Omega$ at $t=0$, provided that its position at $t=T$ admits an explicit representation $\bm{x}_T = \bm{x}_0 + \mathcal{F}(\bm{x}_0, T)$, where $\mathcal{F}$ is the displacement function (i.e., $\mathcal{F}(\bm{x}_0, T) = \bm{x}_T - \bm{x}_0$), the level set function can be constructed as
\begin{equation*}
	\phi(\bm{x}, T) = \phi_0\bigl( \bm{x} - \mathcal{F}(\bm{x}, T) \bigr). 
\end{equation*}
Alternatively, to bypass solving the evolution equation entirely, we can employ a neural network to directly construct an appropriate level set function. For a given reference time $t_0 \in [0,T_{\text{end}}]$, we define the flow map as follows:
\begin{equation} \label{eq2.16}
	\mathbf{X}: \Gamma(t_0) \times [t_0, T_{\text{end}}] \to \bigcup_{t\in[t_0,T_{\text{end}}]}\Gamma(t), \quad \bm{x}_t = \mathbf{X}(\bm{x}, t; t_0), \quad t\in[t_0,T_{\text{end}}],
\end{equation}
where $\bm{x} \in \Gamma(t_0)$ and $\bm{x}_t \in \Gamma(t)$. The flow map can be obtained using the solution of the initial-value problem (\ref{eq1.7}):
\begin{equation*}
	\frac{\mathrm{d}}{\mathrm{d}t} \mathbf{X}(\bm{x},t; t_0) = \mathcal{V}\bigl( \mathbf{X}(\bm{x},t; t_0), t \bigr), \quad \mathbf{X}(\bm{x},t_0; t_0) = \bm{x}, \quad \bm{x} \in \Gamma(t_0).
\end{equation*}
The inverse of $\mathbf{X}(\bm{x},t; t_0)$ is denoted by $\widetilde{\mathbf{X}}(\bm{x}, t; t_0): \Gamma(t) \times [t_0 ,T_{\text{end}}] \to \Gamma(t_0)$. We extend the mapping $\widetilde{\mathbf{X}}$ to the whole domain and define the extension mapping
\[
\widehat{\mathbf{X}} : \Omega \times [t_0,T_{\text{end}}] \to \mathbb{R}^d,
\]
which is required to satisfy the following properties:
\begin{equation} \label{flow map properties}
	\small
	\widehat{\mathbf{X}}(\mathbf{x},t;t_0)|_{\mathbf{x} \in \Gamma(t)} =  \widetilde{\mathbf{X}}(\mathbf{x},t;t_0) \text{ and }
	\widehat{\mathbf{X}}(\mathbf{x},t;t_0) \text{ is orientation-preserving and injective.}
\end{equation}
Therefore, the level set function at time $t$ can be defined as
\begin{equation} \label{eq2.20}
	\phi( \bm{x}, t) = \phi_0\bigl( \widehat{\mathbf{X}}(\bm{x},t;0) \bigr).
\end{equation}
The gradient of this function can be computed via the chain rule:
\begin{equation*}
	\nabla_{\bm{x}} \phi(\bm{x},t) = \nabla_{\widehat{\mathbf{X}}} \phi_0\bigl( \widehat{\mathbf{X}} \bigr) \, \nabla_{\bm{x}} \widehat{\mathbf{X}}(\bm{x},t;0),
\end{equation*}
where $\nabla_{\widehat{\mathbf{X}}} \phi_0$ is the gradient vector and $\nabla_{\bm{x}} \widehat{\mathbf{X}}$ is the Jacobian matrix.

Next, we employ neural networks to approximate the mapping $\widehat{\mathbf{X}}$. Denote by $\mathcal{F}_\theta$ the neural network function, and define the approximating function for $\widehat{\mathbf{X}}$ as
\begin{equation} \label{eq2.21}
	\widehat{\mathbf{X}}_\theta( \bm{x}, t; 0) = \mathcal{F}_\theta( \bm{x}, t) + \bm{x}. 
\end{equation}
Let $\mathbf{X}_{\mathrm{RK}}$ be the approximation of $\mathbf{X}$ computed using the 4th-order explicit Runge--Kutta method~\cite{verner1978explicit}. We then determine the approximate displacement function $\mathcal{F}_\theta$ and the resulting flow map $\widehat{\mathbf{X}}_\theta$ by minimizing the following loss function:
\begin{equation*}
	\ell_{\mathrm{LSF}} = \frac{1}{N_{\Omega}} \sum_{i=1}^{N_{\Omega}} \bigl| \mathcal{F}_\theta( \bm{x}_{i, \Omega}, 0) \bigr|^2 + \frac{1}{N_t N_\Gamma} \sum_{j=1}^{N_t} \sum_{i=1}^{N_\Gamma} \bigl| \widehat{\mathbf{X}}_\theta\bigl( \mathbf{X}_{\mathrm{RK}}(\bm{x}_{i,\Gamma}, t_j; 0), t_j; 0 \bigr) - \bm{x}_{i,\Gamma} \bigr|^2,
\end{equation*}
where $\{ \bm{x}_{i,\Omega} \}_{i=1}^{N_{\Omega}}$ and $\{ \bm{x}_{i,\Gamma} \}_{i=1}^{N_{\Gamma}}$ are training points sampled from the domain $\Omega$ and the initial interface $\Gamma(0)$, respectively, and $\{ t_j \}_{j=1}^{N_{t}}$ forms a sufficiently fine temporal discretization of $[0,T_{\text{end}}]$. 

Notably, we require $\widehat{\mathbf{X}}_\theta$ to satisfy the properties \eqref{flow map properties}. If the approximate flow map $\widehat{\mathbf{X}}_\theta$ fails to be injective or orientation-preserving, the level set function  
\[
\phi_\theta(\bm{x}, t) := \phi_0\bigl( \widehat{\mathbf{X}}_\theta(\bm{x}, t; 0) \bigr)
\]
loses its validity as an implicit interface representation. Lack of injectivity produces global folding: two distinct points $\bm{x}_1 \neq \bm{x}_2$ in the current domain can be pulled back to the same reference point $\bm{y}$, forcing $\phi_\theta(\bm{x}_1, t) = \phi_\theta(\bm{x}_2, t) = \phi_0(\bm{y})$. If $\bm{y}$ lies on the initial interface, the zero level set spuriously bifurcates, creating non-physical internal boundaries and corrupting the subdomain indicator function. Violation of orientation preservation, on the other hand, leads to local Jacobian degeneracy or sign reversal; the map can then erroneously assign points that belong to $\Omega^+(t)$ to $\Omega^-(0)$, destroying the inside/outside sign convention. These defects are the direct analogues of mesh entanglement and element inversion in finite element computations.

To mitigate this issue, we adopt a temporal partitioning strategy, dividing $[0, T_{\text{end}}]$ into smaller sub-intervals:
\[
[0, T_{\text{end}}] = \bigcup_{k=1}^{K}[T_{k-1}, T_k], \quad T_0 = 0,\; T_K = T_{\text{end}},
\]
where the set $\{ T_k \}_{k=1}^{K} \subseteq \{ t_j \}_{j=1}^{N_t}$ can be adaptively chosen by monitoring the Jacobian determinant of $\widehat{\mathbf{X}}_\theta$. For any $t$ in the $k$-th sub-interval $[T_{k-1}, T_k]$, we can decompose $\widehat{\mathbf{X}}(\bm{x}, t; 0)$ into a composition of sub-mappings:
\begin{equation} \label{eq2.24}
	\widehat{\mathbf{X}}(\bm{x}, t;0) = \bigl[ \widehat{\mathbf{X}}(\cdot, t;T_{k-1}) \circ \widehat{\mathbf{X}}(\cdot, T_{k-1}; T_{k-2}) \circ \cdots \circ \widehat{\mathbf{X}}(\cdot, T_{1}; 0) \bigr](\bm{x}).
\end{equation}
For each short time sub-interval $[T_{k-1}, T_k]$, we compute a high-precision approximate sub-mapping $\widehat{\mathbf{X}}_{\theta, k}$ using a small neural network. Note that employing an excessively large network may lead to overfitting, which in turn could introduce undesirable mapping entanglement.

Moreover, the displacement of interface points, $\Delta \bm{x}$, remains moderate over short time intervals, effectively preventing mesh entanglement, particularly for problems involving large interface deformations. Correspondingly, we employ a series of sub-networks $\widehat{\mathbf{X}}_{\theta, k}$ to approximate $\widehat{\mathbf{X}}(\cdot, t; T_{k-1})$ for $t \in [T_{k-1}, T_k]$, with the loss function adjusted as follows:
\begin{equation}\label{eq2.25}
	\begin{aligned}
		\ell_{\mathrm{LSF}}^k = &\frac{1}{N_{\Omega}} \sum_{i=1}^{N_{\Omega}} \bigl| \mathcal{F}_\theta(\bm{x}_{i, \Omega}, T_{k-1}) \bigr|^2 + \frac{1}{N_{t,k} N_{\Gamma}} \sum_{j=1}^{N_{t,k}} \sum_{i=1}^{N_{\Gamma}} \\
		& \qquad \bigl| \widehat{\mathbf{X}}_\theta\bigl( \mathbf{X}_{\mathrm{RK}}(\bm{x}_{i,\Gamma}, t_j^k; 0) , t_j^k; T_{k-1} \bigr) - \mathbf{X}_{\mathrm{RK}}(\bm{x}_{i,\Gamma}, T_{k-1}; 0) \bigr|^2,
	\end{aligned}
\end{equation}
where $\{ t_j^k \}_{j=1}^{N_{t,k}}$ denotes a discrete time series within $[T_{k-1}, T_k]$, sampled from the global set $\{ t_j \}_{j=1}^{N_t}$.

To detect potential mesh entanglement (indicated by negative Jacobian determinants), we locally linearize the mapping via a first-order Taylor expansion. This approach employs uniform sampling over $\Omega$ with a sufficiently small grid spacing $h$, which ensures the validity of the local linear approximation:
\begin{equation} \label{eq2.26}
	\widehat{\mathbf{X}}_{\theta, k}(\bm{x},t;T_{k-1}) \approx \widehat{\mathbf{X}}_{\theta, k}(\mathbf{x}_0,t;T_{k-1}) + \mathbf{J}(\bm{x}_0)(\bm{x} - \bm{x}_0), \quad \forall \bm{x} \in B_\varepsilon(\bm{x}_0),
\end{equation}
where $\mathbf{J}(\bm{x}_0):=\nabla_{\bm{x}} \widehat{\mathbf{X}}_{\theta,k}$ denotes the Jacobian matrix of the mapping and $B_\varepsilon \subset \Omega$ is a small neighborhood of the sample point $\bm{x}_0 \in \Omega$. A negative Jacobian determinant ($\det(\mathbf{J}) < 0$) indicates that the mapping fails to preserve orientation. It is worth noting that, for a smooth map between two compact simply connected domains, orientation preservation (positive Jacobian determinant everywhere) guarantees global injectivity. To enforce this condition numerically, we impose a positivity constraint by maintaining $\det(\mathbf{J}) > \delta$, where $\delta>0$ is a prescribed threshold. A Jacobian determinant falling below $\delta$ (i.e., $\det(\nabla_{\bm{x}} \widehat{\mathbf{X}}_{\theta,k}) < \delta$) serves as a topological early warning signal. At this critical point, we initialize a new neural network $\widehat{\mathbf{X}}_{\theta,k+1}$ to learn the deformation mapping for the subsequent interval $[T_k, T_{k+1}]$.

We therefore develop an adaptive time-stepping algorithm, which consists of (1) a sequence of sub-mappings $\{ \widehat{\mathbf{X}}_{\theta, k} \}_{k=1}^{K}$ and (2) a corresponding partition of the time domain into sub-intervals $\{ [T_{k-1}, T_k] \}_{k=1}^{K}$ based on the discrete temporal grid $\{ t_j \}_{j=1}^{N_t}$ of $[0,T_{\text{end}}]$.
This adaptive network growth strategy ensures that $\phi_\theta$ remains valid over the entire time domain. We describe the complete time-adaptive training procedure in Algorithm~\ref{algorithm 1} as follows:

\begin{algorithm}
\caption{Time-Stepping Neural Network Training for (\ref{eq2.24})}
	\label{algorithm 1}
	\SetAlgoLined
	\DontPrintSemicolon
	\KwIn{Initial time $t_0=0$, threshold $\delta > 0$; Time sequence $\{t_i\}_{i=0}^{N_t}$, sample points $\{{\bm{x}}_{i,\Omega}\}_{i=1}^{N_{\Omega}}$ and $\left\lbrace {\bm{x}}_{i,\Gamma} \right\rbrace_{i=1}^{N_{\Gamma}}$}
	\KwOut{Trained neural networks $\widehat{\mathbf{X}}_{\theta,k}$ for each adaptive interval $[T_{k-1}, T_k]$, $k=1,\cdots,K$}
	
	Set network index $k \leftarrow 1$ and current time index $i \leftarrow 0$\; 
	
	Initialize neural network $\widehat{\mathbf{X}}_{\theta,k}$\;
	
	Set current start time $t_s \leftarrow t_i$, $T_0 \leftarrow t_i$\;
	
	\While{$i \le N_t$}{
		$i \leftarrow i + 1$\;
		
		Train $\widehat{\mathbf{X}}_{\theta,k}$ on time steps $\{t_s, \dots, t_i\}$ using loss function (\ref{eq2.25})\;
		
		Compute Jacobian determinant $|\mathbf{J}|$ of $\widehat{\mathbf{X}}_{\theta,k}$\;
		
		\eIf{$|\mathbf{J}| > \delta$ \textbf{and} $i \le N_t$}{
			Continue to next time step \;
		}{
			\If{$|\mathbf{J}| \leq \delta$}{
				\textbf{Terminate training} of $\widehat{\mathbf{X}}_{\theta,k}$ for current interval $[t_s, t_i]$\;
				
				Save the sub-network $\widehat{\mathbf{X}}_{\theta,k}$\;
				
				$T_k \leftarrow t_i$\;
			}
			$k \leftarrow k + 1$ \tcp*{Move to next network}
			Initialize new network $\widehat{\mathbf{X}}_{\theta,k}$\;
			
			$t_s \leftarrow t_i$ \tcp*{Reset start time to current endpoint}
		}
	}
	$K \leftarrow k, T_K \leftarrow T_{\text{end}}$\;
\end{algorithm}

\begin{remark}
	In Algorithm~\ref{algorithm 1}, we assume that the sampling density satisfies the local linearization validity condition. Here $\det(\mathbf{J})$ denotes the minimum Jacobian determinant computed over all uniformly sampled points in $\Omega$.
\end{remark}
\begin{remark}
	To guarantee the mapping accuracy on the interface, we use only the interface points as training data for the neural network. Moreover, we can enforce the identity mapping at the beginning of each sub-interval by setting
	\begin{equation*}
		\widehat{\mathbf{X}}_{\theta, k}(\bm{x}, t; 0) = (t - T_{k-1})\, \mathcal{F}_\theta(\bm{x}, t) + \bm{x},
	\end{equation*}
	so that $\widehat{\mathbf{X}}_{\theta, k}(\bm{x}, T_{k-1}; 0) = \bm{x}$.
\end{remark}
\section{Error analysis} \label{sec3}

In this section, we present the error analysis of the XI-PINN framework introduced in Section~\ref{sec2}. We assume that the domain $\Omega \subseteq \mathbb{R}^d$ is bounded with a Lipschitz boundary $\partial \Omega$, and $\Gamma(t)$ is a closed smooth curve (or surface) within $\Omega$. To simplify the analysis, we assume $h_D = 0$. This implies that the neural network approximation $\widetilde{u}_{\mathcal{NN}} \in \mathcal{H}$ remains continuous across the interface.

The space-time domain is denoted by $Q:=\Omega \times [0,T_{\text{end}}]$ and $\Sigma:= \partial \Omega \times [0,T_{\text{end}}]$. The extended space-time domain is defined as 
\[
\widetilde{Q}:= Q \times \bigl[ \min_{(\bm{x}, t)\in Q} z(\bm{x}, t), \max_{(\bm{x}, t)\in Q} z(\bm{x}, t) \bigr].
\]
For each subdomain $\omega \subseteq \Omega$, we denote by $H^k(\omega)$ the standard Sobolev space equipped with the norm $\|\cdot\|_{H^k(\omega)}$, and define the time-dependent Bochner space $H^l(0, T; H^k(\omega))$ as the space of all measurable functions $u: [0,T] \to H^k(\omega)$ with the norm $\|\cdot\|_{H^l(0, T; H^k(\omega))}$. Here, we let $H^k(\Omega^+(t) \cup \Omega^-(t))$ denote the space of functions that belong to $H^k$ on each subdomain separately, equipped with the piecewise norm
\[
\|v\|_{H^k(\Omega^+(t) \cup \Omega^-(t))}:= \|v\|_{H^k(\Omega^+(t))} + \|v\|_{H^k(\Omega^-(t))}.
\]
We also employ the Besov space $B_{2,1}^{1/2}(\omega):=\bigl( L^2(\omega), H^1(\omega) \bigr)_{1/2, 1}$ obtained by real interpolation (see~\cite{tartar2007introduction}). The piecewise Besov space $B_{2,1}^{1/2}(\Omega^+(t) \cup \Omega^-(t))$ and its norm $\|v\|_{B_{2,1}^{1/2}(\Omega^+(t) \cup \Omega^-(t))}$ are defined analogously. Moreover, we have the embedding $H^{1}(\omega) \hookrightarrow B_{2,1}^{1/2}(\omega)$. Let $r$ and $s$ be two non-negative real numbers, and define the space $H^{r,s}(Q) := L^2(0,T; H^{r}(\Omega)) \cap H^s(0, T; L^2(\Omega))$ with the norm $\|\cdot\|_{H^{r,s}}$; this is a Hilbert space (see~\cite{Non1972}). For a function $v(t)$, we write $v' := \frac{\mathrm{d}}{\mathrm{d} t} v$ for its time derivative (or $\partial_t v$ if $v$ depends on space as well) and let $v_0:=v(0)$ denote its initial value. For simplicity, we use the shorthand notation $A \lesssim B$ for the inequality $A \le C B$, where $C>0$ is a generic constant independent of $N_\Omega$, $N_{\partial \Omega}$, $N_{\Omega_0}$, and $N_\Gamma$, the numbers of training points specified in the loss functions (\ref{eq2.8})--(\ref{eq2.12}).

\subsection{Error analysis of the XI-PINN method}
We first establish an energy estimate for the solution $u$ to Eqs. (\ref{eq1.1})-(\ref{eq1.7}).
\begin{lemma}\label{lemma1}
Assume $h_N \in L^2\left( 0, T_{\text{end}}; L^2(\Gamma(t))\right)$,  $u_0 \in H^1(\Omega)$, $g \in H^{2, 1}(\Sigma)$, with $u_0$ and $g$ satisfying the compatibility conditions. Then there exists a unique solution $u \in L^2\left( 0, T; H^1(\Omega)\right)$ to (\ref{eq1.1})-(\ref{eq1.7}) satisfying the estimate:
	\begin{equation}
		\begin{aligned}
			\|u\|_{L^2\left( 0, T; H^1(\Omega)\right) }^2 \lesssim& \|f\|_{L^2(0,T_{\text{end}};L^2(\Omega))}^2 + \|u_0\|_{H^1(\Omega)}^2 + \|h_N\|_{L^2\left( 0, T_{\text{end}}; L^2(\Gamma(t))\right) }^2 \\
			&+ \|g\|_{L^2\left( 0, T_{\text{end}}, H^2(\partial \Omega)\right) }^2 + \|g'\|_{L^2\left( 0, T_{\text{end}}, L^2(\partial \Omega)\right)}^2.
		\end{aligned}
	\end{equation}
\end{lemma}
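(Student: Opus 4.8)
The plan is to use the classical energy method for linear parabolic equations, treating the moving interface through the jump conditions in a variational formulation and lifting away the Dirichlet data. Since $h_D=0$, the trace of $u$ is continuous across $\Gamma(t)$, so $u(t)\in H^1(\Omega)$ (not merely piecewise $H^1$); multiplying (\ref{eq1.1}) by a test function $v\in H^1_0(\Omega)$, integrating separately over $\Omega^+(t)$ and $\Omega^-(t)$, and using the flux jump condition (\ref{eq1.5}) gives the weak formulation: $u(t)|_{\partial\Omega}=g(t)$, $u(0)=u_0$, and for a.e.\ $t$
\begin{equation*}
	\langle\partial_t u(t),v\rangle + \int_\Omega \beta(\cdot,t)\nabla u(t)\cdot\nabla v\dx = \int_\Omega f(t)\,v\dx - \int_{\Gamma(t)} h_N(t)\,v\ds \qquad \forall v\in H^1_0(\Omega).
\end{equation*}

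First I would homogenize the boundary data. By the parabolic trace/lifting theory for the scale $H^{r,s}$, the datum $g\in H^{2,1}(\Sigma)$ can be extended (e.g.\ by solving the heat equation with boundary value $g$) to a function $G\in H^{2,1}(Q)$ with $G|_\Sigma=g$, $G$ continuous across $\Gamma(t)$, and $\|G\|_{H^{2,1}(Q)}\lesssim\|g\|_{L^2(0,T_{\text{end}};H^2(\partial\Omega))}+\|g'\|_{L^2(0,T_{\text{end}};L^2(\partial\Omega))}$. Setting $w:=u-G$, the compatibility conditions give $w(0)=u_0-G(0)\in H^1_0(\Omega)$, and $w$ solves the same weak problem with homogeneous Dirichlet data, with $f$ replaced by $\widetilde f:=f-\partial_t G+\nabla\cdot(\beta\nabla G)$ and $h_N$ replaced by $\widetilde h_N:=h_N+(\beta^+-\beta^-)\nabla G\cdot\mathbf n|_{\Gamma(t)}$; here $G(t)\in H^2(\Omega)$ together with $\beta$ piecewise constant make $\nabla\cdot(\beta\nabla G)$ well defined in $L^2(\Omega^+(t)\cup\Omega^-(t))$, and the trace theorem gives $\widetilde h_N\in L^2(0,T_{\text{end}};L^2(\Gamma(t)))$. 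This is exactly where the hypothesis $g\in H^{2,1}(\Sigma)$ enters, and one checks directly that $\|\widetilde f\|_{L^2(0,T_{\text{end}};L^2(\Omega))}+\|\widetilde h_N\|_{L^2(0,T_{\text{end}};L^2(\Gamma(t)))}+\|w(0)\|_{H^1(\Omega)}$ is controlled by the right-hand side of the asserted estimate together with $\|f\|_{L^2(0,T_{\text{end}};L^2(\Omega))}$ and $\|u_0\|_{H^1(\Omega)}$.

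Next I would prove existence together with the a priori bound by a Galerkin approximation in a countable basis of $H^1_0(\Omega)$; the time-dependent form $a(t;u,v):=\int_\Omega\beta(\cdot,t)\nabla u\cdot\nabla v\dx$ is bounded and coercive uniformly in $t$ because $0<\beta_{\min}\le\beta(\cdot,t)\le\beta_{\max}$ for every $t$, so the finite-dimensional systems have global solutions $w_m$. Testing the $m$-th system with $w_m(t)$ yields
\begin{equation*}
	\tfrac12\frac{\mathrm{d}}{\mathrm{d}t}\|w_m(t)\|_{L^2(\Omega)}^2 + \beta_{\min}\|\nabla w_m(t)\|_{L^2(\Omega)}^2 \le \Big|\int_\Omega \widetilde f\,w_m\dx\Big| + \Big|\int_{\Gamma(t)}\widetilde h_N\,w_m\ds\Big|.
\end{equation*}
The volume term is absorbed by Cauchy--Schwarz and the Poincar\'e inequality. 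For the interface term I would use the sharp trace embedding $H^1(\Omega)\hookrightarrow B_{2,1}^{1/2}(\Omega)\hookrightarrow L^2(\Gamma(t))$ --- whose constant is uniform in $t$ since $\Gamma(t)$ is uniformly smooth on the compact interval $[0,T_{\text{end}}]$ --- together with the interpolation bound $\|w_m\|_{L^2(\Gamma(t))}\lesssim\|w_m\|_{L^2(\Omega)}^{1/2}\|w_m\|_{H^1(\Omega)}^{1/2}$, the Poincar\'e inequality, and Young's inequality, so that $\big|\int_{\Gamma(t)}\widetilde h_N w_m\ds\big|\le\tfrac{\beta_{\min}}{2}\|\nabla w_m\|_{L^2(\Omega)}^2 + C\big(\|w_m\|_{L^2(\Omega)}^2 + \|\widetilde h_N(t)\|_{L^2(\Gamma(t))}^2\big)$. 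Absorbing the gradient contributions into the left-hand side and applying Gr\"onwall's inequality over $[0,T_{\text{end}}]$ gives a bound on $\sup_t\|w_m(t)\|_{L^2(\Omega)}^2 + \int_0^{T_{\text{end}}}\|\nabla w_m(t)\|_{L^2(\Omega)}^2\,\mathrm{d}t$ in terms of $\|w(0)\|_{L^2(\Omega)}^2+\|\widetilde f\|_{L^2(0,T_{\text{end}};L^2(\Omega))}^2+\|\widetilde h_N\|_{L^2(0,T_{\text{end}};L^2(\Gamma(t)))}^2$, uniformly in $m$; reading $w_m'$ off the equation shows it is bounded in $L^2(0,T_{\text{end}};H^{-1}(\Omega))$. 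A standard weak-compactness and passage-to-the-limit argument then produces $w\in L^2(0,T_{\text{end}};H^1_0(\Omega))$ (with $w'\in L^2(0,T_{\text{end}};H^{-1}(\Omega))$) solving the weak problem and inheriting the same bound, where $\frac{\mathrm{d}}{\mathrm{d}t}\|w\|_{L^2}^2=2\langle w',w\rangle$ is justified by the Lions--Magenes lemma; finally $u:=w+G$ solves (\ref{eq1.1})--(\ref{eq1.7}) and $\|u\|_{L^2(0,T;H^1(\Omega))}^2\lesssim\|w\|_{L^2(0,T;H^1(\Omega))}^2+\|G\|_{L^2(0,T;H^1(\Omega))}^2$ yields the claim. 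Uniqueness is immediate: the difference of two solutions solves the homogeneous problem, and the same energy estimate forces it to vanish.

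The step I expect to be the main obstacle is the moving-interface flux term $\int_{\Gamma(t)}h_N v\ds$. One must select exactly the right trace inequality: the Besov embedding $H^1(\Omega)\hookrightarrow B_{2,1}^{1/2}(\Omega)\hookrightarrow L^2(\Gamma(t))$ is what allows this term to be split between a piece absorbed by the coercive dissipation and a piece absorbed by Gr\"onwall, whereas a cruder $H^{1/2+\varepsilon}$ trace would not close the estimate sharply. Moreover, because $\Gamma(t)$ and the subdomains $\Omega^\pm(t)$ defining $\beta$ vary in time, one must check that all trace, Poincar\'e, and coercivity constants are uniform in $t\in[0,T_{\text{end}}]$ --- this uses the uniform smoothness of $\Gamma(t)$ and the boundedness of $\Omega$. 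The accompanying technical point is the parabolic lifting of $g$, which is the reason the hypotheses demand $g\in H^{2,1}(\Sigma)$ (so that $\widetilde f$ and $\widetilde h_N$ land in $L^2$) rather than only $H^{1/2}$-type regularity.
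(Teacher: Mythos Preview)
Your proposal is correct and follows essentially the same route as the paper: lift the inhomogeneous data to reduce to a problem in $H^1_0(\Omega)$, invoke Galerkin approximation for existence, test with the solution to get an energy identity, handle the interface flux term via a trace inequality plus Young's inequality, and close with Gr\"onwall. Two minor deviations are worth noting: the paper lifts $g$ and $u_0$ \emph{simultaneously} (so the reduced problem has both zero boundary \emph{and} zero initial data), and for the interface term the paper uses only the standard trace bound $\|v\|_{L^2(\Gamma(t))}\lesssim\|v\|_{H^1(\Omega)}$ together with a small-$\varepsilon$ Young inequality---so your worry that one must use the sharper Besov embedding to close the estimate is unnecessary here.
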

\begin{proof} 
	The first step involves homogenizing the boundary conditions. Invoking the Trace Theorem\cite{Non1972}, there exists $w \in H^{2,1}(Q)$ satisfying
	\begin{equation*}
		\left. w({\bm{x}}, t)\right|_{\Sigma} = g({\bm{x}},t), \quad
		\left. w({\bm{x}}, 0)\right|_{\Omega} = u_0({\bm{x}}),
	\end{equation*}
	and
	\begin{equation} \label{eq3.2}
		\|w\|_{H^{2,1}(Q)} \lesssim \|g\|_{H^{2, 1}(\Sigma)} + \|u_0\|_{H^1(\Omega)}.
	\end{equation}
	Setting $v=u-w$, we reformulate problem (\ref{eq1.1})-(\ref{eq1.5}) as
	\begin{equation}\label{eq3.3}
	\left\{	\begin{aligned}
			\partial_t v - \nabla \cdot \left( \beta \nabla v\right)  &= \widetilde{f} \quad \mathrm{in}\; \Omega = \Omega^+(t) \cup \Omega^-(t), \quad  t \in [0, T_{\text{end}}], \\
			v\left( \cdot, t\right) &= 0 \quad \mathrm{on}\; \partial \Omega, \quad t \in [0, T_{\text{end}}],\\
			v\left( \cdot, 0\right) &= 0 \quad \mathrm{in}\; \Omega = \Omega^+(0) \cup \Omega^-(0),
		\end{aligned}\right.
	\end{equation}
	with the interface condition
	\begin{equation}\label{eq3.7}
		\begin{aligned}
		[u]_{\Gamma(t)} &= 0, \quad t\in [0,T_{\text{end}}],\\
		[\beta \nabla v \cdot \mathbf{n}]_{\Gamma(t)}& = \widetilde{h}_N, \quad t\in [0,T_{\text{end}}], 			
		\end{aligned}
	\end{equation}
	where $\widetilde{f} := f - w' + \nabla \cdot (\beta \nabla w) \in L^2(0, T_{\text{end}}; L^2(\Omega))$, and $\widetilde{h}_N:=h_N - [\beta \nabla w \cdot \mathbf{n}]_{\Gamma(t)} \in L^2(0, T_{\text{end}}; L^2(\Gamma(t)))$. 
	{The existence of a weak solution \( v \in L^2(0,T_{\text{end}}; H_0^1(\Omega)) \) with \( v' \in L^2(0,T_{\text{end}}; H^{-1}(\Omega)) \) to (\ref{eq3.3})-(\ref{eq3.7}) follows by the Galerkin approximation method (cf.~\cite[Section 7.1]{evans2022partial}). 
	Then, by combining standard energy estimates for parabolic equations\cite{evans2022partial} with the triangle inequality, we obtain
	\begin{equation*}
		\begin{aligned}
			\|u\|_{L^2(0, T_{\text{end}}; H^1(\Omega))}^2 
			&\lesssim \|v\|_{L^2(0, T_{\text{end}}; H^1(\Omega))}^2  + \|w\|_{L^2(0, T; H^1(\Omega))}^2 \\
			&\lesssim \|v\|_{L^2(0, T_{\text{end}}; H^1(\Omega))}^2 +  \|g\|_{H^{2,1}(\Sigma)}^2 + \|u_0\|_{H^1(\Omega)},
		\end{aligned}
	\end{equation*}
	which combines the definition of norm $\|g\|_{H^{2,1}(\Sigma)}$  immediately yields the result.}
\end{proof}

For subsequent error analysis, we define the continuous loss function as:
\begin{equation} \label{eq3.12}
	{\mathcal{L}}(\widetilde{u}_{\theta}) = \mathop{\sum}_{i=1}^4 {\mathcal{L}_i}(\widetilde{u}_{\theta}), 
\end{equation}
\begin{align*}
	&{\mathcal{L}_1}(\widetilde{u}_{\theta}) = \int_{0}^{T_{\text{end}}}\int_{\Omega} \left| \partial_t \widetilde{u}_{\theta}({\bm{x}}, t, z({\bm{x}}, t)) - \nabla \cdot (\beta \nabla\widetilde{u}_{\theta}({\bm{x}}, t, z({\bm{x}}, t))) - f({\bm{x}}, t) \right|^2 \mathrm{d}{\bm{x}} \mathrm{d}t, 
	\\
	&\begin{aligned}
		{\mathcal{L}_2}(\widetilde{u}_{\theta}) = &\int_{0}^{T_{\text{end}}}\int_{\partial \Omega} { \mathop{\sum}_{|\alpha|=0}^2 }\left| \partial^\alpha_{\bm{x}} \left( \widetilde{u}_{\theta}({\bm{x}}, t, z({\bm{x}}, t)) - g({\bm{x}}, t)\right)  \right|^2 \mathrm{d}{\bm{x}} \mathrm{d}t\\
		&+\int_{0}^{T_{\text{end}}}\int_{\partial \Omega} \left| \partial_t \widetilde{u}_{\theta}({\bm{x}}, t, z({\bm{x}}, t)) - \partial_t g({\bm{x}}, t) \right|^2 \mathrm{d}{\bm{x}} \mathrm{d}t 
	\end{aligned}
	\\
	&{\mathcal{L}_3}(\widetilde{u}_{\theta}) = \int_{\Omega} {\mathop{\sum}_{|\alpha| = 0}^1 }\left| \partial^\alpha_{\bm{x}} \left( \widetilde{u}_{\theta}({\bm{x}}, 0, z({\bm{x}}, 0)) - u_0({\bm{x}}) \right) \right|^2 \mathrm{d}{\bm{x}},
	\\
	&{\mathcal{L}_4}(\widetilde{u}_{\theta}) = \int_{0}^{T_{\text{end}}}\int_{\Gamma(t)} \left| [\beta \nabla \widetilde{u}_{\theta}({\bm{x}}, t, z({\bm{x}}, t)) \cdot \mathbf{n}]_{\Gamma(t)} - h_N({\bm{x}}, t)\right|^2 \mathrm{d}{\bm{x}} \mathrm{d}t,
\end{align*}
where {$\alpha = (\alpha_1, \alpha_2, \dots, \alpha_d) \in \left( \mathbb{N}\cup \left\lbrace 0\right\rbrace \right) ^d$} is a multi-index for spatial coordinates with $|\alpha| = \sum_{k=1}^d \alpha_k$, and $\partial_{{\bm{x}}}^\alpha = \dfrac{\partial^{|\alpha|}}{\partial x_1^{\alpha_1} \cdots \partial x_d^{\alpha_d}}$. Further, the corresponding discrete form of loss function \eqref{eq3.12} is defined by
\begin{equation} \label{eq3.17}
	 {\mathcal{L}}^d(\widetilde{u}_{\theta}) = \mathop{\sum}_{i=1}^4 \widehat{\ell}_i(\widetilde{u}_{\theta}),
\end{equation}
with $\widehat{\ell}_1(\widetilde{u}_{\theta})=\left|\Omega(t)\right| {\ell}_1(\widetilde{u}_{\theta})$, $\widehat{\ell}_4(\widetilde{u}_{\theta}) =\left|\Gamma(t)\right| {\ell}_4(\widetilde{u}_{\theta})$, and
\begin{align*}
	&\begin{aligned}
		\widehat{\ell}_2(\widetilde{u}_{\theta}) 
		&= \dfrac{\left| \partial \Omega(t)\right| }{N_{\partial \Omega}} \sum_{\bm{s}_j \in \mathcal{S}_{\partial \Omega}} \left[ { \mathop{\sum}_{|\alpha|=0}^2 }\left| \partial^\alpha_{\bm{x}} \left( \widetilde{u}_{\theta}(({\bm{s}}_j), z({\bm{s}}_j)) - g(\bm{s}_j)\right) \right|^2 \right.  \\
		&\left. \qquad \qquad \qquad \qquad + \left| \left( \partial_t \widetilde{u}_{\theta}((\bm{s}_j), z(\bm{s}_j)) -  \partial_t g(\bm{s}_j)\right)  \right|^2\right]  ,
	\end{aligned}
	\\
	&\widehat{\ell}_3(\widetilde{u}_{\theta}) = \dfrac{\left| \Omega(0) \right|}{N_{\Omega_0}} \mathop{\sum}_{\bm{s}_j \in \mathcal{S}_{\Omega_0}} { \mathop{\sum}_{|\alpha| = 0}^1 } \left| \partial^\alpha_{\bm{x}}\left(  \widetilde{u}_{\theta}\left(\bm{s}_j,z\left(\bm{s}_j\right) \right) - u_0\left(\bm{s}_j\right)\right) \right| ^2,
\end{align*}

Compared to (\ref{eq2.7})-(\ref{eq2.12}), the term \( \ell_5 \) has been eliminated by exploiting the level-set function's continuity, while derivative information has been incorporated into both the initial condition (\ref{eq2.10}) and boundary term (\ref{eq2.9}) inspired by the energy estimates. We remark that although the error analysis motivates including derivative terms in the loss function, our neural network implementation retains the original loss function defined in (\ref{eq2.7})-(\ref{eq2.12}).

Let $\widetilde{u}^*$ and $\widetilde{u}^*_d$ denote the best approximations in the function class $\mathcal{H}$ according to formulas (\ref{eq3.12}) and (\ref{eq3.17}), respectively. Namely,
\begin{equation*}
	\widetilde{u}^* := \mathop{\arg \min}_{v_\theta \in \mathcal{H}} {\mathcal{L}}(v_\theta), \quad \widetilde{u}_d^* := \mathop{\arg \min}_{v_\theta \in \mathcal{H}}  {\mathcal{L}}^d(v_\theta).
\end{equation*}
Further denote by $\widetilde{u}^*_{d\mathscr{A}}$ the numerical solution obtained via optimization procedures based on the loss function (\ref{eq3.17}). We then have the following result:
\begin{theorem} \label{thm1}
	Under the assumption of Lemma~\ref{lemma1}, if we further assume that there exists an extension function $\widetilde{u}({\bm{x}},t, z({\bm{x}}, t))$ of the solution $u({\bm{x}}, t)$ satisfying $\widetilde{u} \in H^4(\widetilde{Q})$, then we have
	\begin{equation}
		\begin{aligned}
			\|u - u^*_{d \mathscr{A}}\|^2_{L^2(0,T, H^1(\Omega))} \lesssim & \mathop{\inf}_{v_\theta \in \mathcal{H}} \|v_\theta - \widetilde{u}\|_{H^4(\widetilde{Q})}^2 + \mathop{\sup}_{v_\theta \in \mathcal{H}} \left|  {\mathcal{L}}^d(v_\theta) - {\mathcal{L}}(v_\theta) \right| \\
			&+ \left|  {\mathcal{L}}^d(\widetilde{u}^*_{d\mathscr{A}}) -  {\mathcal{L}}^d(\widetilde{u}^*_{d}) \right|.
		\end{aligned}
	\end{equation}
\end{theorem}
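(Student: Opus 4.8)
The proof follows the by now standard decomposition of the total error into an approximation term, a statistical (quadrature) term and an optimization term, the whole argument being anchored by a \emph{stability} estimate that bounds the Bochner error of an arbitrary network realization by the continuous loss $\mathcal{L}$. Concretely, the plan is: (a) prove $\|u-v_\theta\|_{L^2(0,T;H^1(\Omega))}^2\lesssim\mathcal{L}(v_\theta)$ for every $v_\theta\in\mathcal{H}$, by applying the energy estimate of Lemma~\ref{lemma1} to the error equation satisfied by $u-v_\theta$; (b) specialize to $v_\theta=\widetilde{u}^*_{d\mathscr{A}}$ and split $\mathcal{L}(\widetilde{u}^*_{d\mathscr{A}})$ through the discrete functional $\mathcal{L}^d$ and its minimizer $\widetilde{u}^*_d$; (c) bound $\inf_{v_\theta\in\mathcal{H}}\mathcal{L}(v_\theta)$ by the $H^4(\widetilde{Q})$-distance from $\mathcal{H}$ to the extension $\widetilde{u}$ of (\ref{eq2.5}), using the chain-rule identities of Remark~\ref{remark2} together with trace inequalities. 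Throughout, $u^*_{d\mathscr{A}}$ in the statement is read as the composition $\widetilde{u}^*_{d\mathscr{A}}(\mathbf{x},t,z(\mathbf{x},t))$.

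For (a), set $e:=u-v_\theta$. Since $h_D=0$ and the level-set construction enforces $[\widetilde{u}_\theta]_{\Gamma(t)}=0$ (so that the Dirichlet jump residual drops out of both the loss and the error equation), $e$ solves a homogeneous-boundary parabolic interface problem of exactly the form (\ref{eq3.3})--(\ref{eq3.7}), whose data are the PINN residuals $r_1:=\partial_t v_\theta-\nabla\cdot(\beta\nabla v_\theta)-f$ in $\Omega$, $r_2:=v_\theta-g$ on $\Sigma$, $r_3:=v_\theta(\cdot,0)-u_0$, and $r_4:=[\beta\nabla v_\theta\cdot\mathbf{n}]_{\Gamma(t)}-h_N$, the composed derivatives being computed through Remark~\ref{remark2}; these residual data inherit the compatibility conditions of $u_0,g$. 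Running the a priori estimate of Lemma~\ref{lemma1} verbatim on $e$ (no existence question arises, $e$ being already given) yields $\|u-v_\theta\|_{L^2(0,T;H^1(\Omega))}^2\lesssim\|r_1\|_{L^2(Q)}^2+\|r_3\|_{H^1(\Omega)}^2+\|r_4\|_{L^2(0,T;L^2(\Gamma(t)))}^2+\|r_2\|_{L^2(0,T;H^2(\partial\Omega))}^2+\|r_2'\|_{L^2(0,T;L^2(\partial\Omega))}^2=\mathcal{L}(v_\theta)$; this is precisely why $\mathcal{L}_2$ and $\mathcal{L}_3$ in (\ref{eq3.12}) were augmented with the derivative terms matching the $H^{2,1}$ and $H^1$ norms that appear on the data in Lemma~\ref{lemma1}.

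For (b)--(c), taking $v_\theta=\widetilde{u}^*_{d\mathscr{A}}$ in (a) and inserting $\pm\mathcal{L}^d(\widetilde{u}^*_{d\mathscr{A}})\pm\mathcal{L}^d(\widetilde{u}^*_d)$ gives $\mathcal{L}(\widetilde{u}^*_{d\mathscr{A}})\le\sup_{v_\theta\in\mathcal{H}}|\mathcal{L}^d(v_\theta)-\mathcal{L}(v_\theta)|+|\mathcal{L}^d(\widetilde{u}^*_{d\mathscr{A}})-\mathcal{L}^d(\widetilde{u}^*_d)|+\mathcal{L}^d(\widetilde{u}^*_d)$, and since $\widetilde{u}^*_d$ minimizes $\mathcal{L}^d$ over $\mathcal{H}$, one has $\mathcal{L}^d(\widetilde{u}^*_d)\le\mathcal{L}^d(v_\theta)\le\mathcal{L}(v_\theta)+\sup_{w\in\mathcal{H}}|\mathcal{L}^d(w)-\mathcal{L}(w)|$ for every $v_\theta\in\mathcal{H}$, hence $\mathcal{L}^d(\widetilde{u}^*_d)\le\inf_{v_\theta\in\mathcal{H}}\mathcal{L}(v_\theta)+\sup_{w\in\mathcal{H}}|\mathcal{L}^d(w)-\mathcal{L}(w)|$. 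It remains to control $\inf_{v_\theta\in\mathcal{H}}\mathcal{L}(v_\theta)$. The identity $\widetilde{u}(\mathbf{x},t,z(\mathbf{x},t))=u(\mathbf{x},t)$, differentiated and combined with (\ref{eq1.1})--(\ref{eq1.5}), gives $\mathcal{L}(\widetilde{u})=0$; since the PDE, boundary, initial and flux-jump residual operators are linear, each $\mathcal{L}_i(v_\theta)$ equals the same functional evaluated on $v_\theta-\widetilde{u}$ with homogeneous data. Expanding via Remark~\ref{remark2}, each $\mathcal{L}_i$ becomes an $L^2$ norm --- over $Q$, over $\Sigma$, over $\Omega\times\{0\}$, and over $\bigcup_t\Gamma(t)\times\{t\}$ respectively --- of a finite sum of derivatives of order $\le2$ of $(v_\theta-\widetilde{u})(\mathbf{x},t,z(\mathbf{x},t))$, with coefficients built from $\beta$, $\mathbf{n}$ and derivatives of $z=|\phi|$ up to order $2$; off the measure-zero kink set $\bigcup_t\Gamma(t)$ these coefficients are bounded under the assumed smoothness of $\Gamma(t)$ and $\partial\Omega$. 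One trace inequality restricts an $H^k(\widetilde{Q})$ function to the codimension-one graph $\{z=z(\mathbf{x},t)\}$ over $Q$ at the cost of one derivative, and a second restricts the resulting function on $Q$ to $\Sigma$ (or $\Omega\times\{0\}$, or $\bigcup_t\Gamma(t)\times\{t\}$) at the cost of one more; the binding term is $\mathcal{L}_2$, which carries second spatial derivatives on $\partial\Omega$ and therefore requires $v_\theta-\widetilde{u}\in H^4(\widetilde{Q})$, all other terms needing only $H^3(\widetilde{Q})$. Thus $\mathcal{L}(v_\theta)\lesssim\|v_\theta-\widetilde{u}\|_{H^4(\widetilde{Q})}^2$; assembling (a)--(b), taking the infimum over $\mathcal{H}$, and taking square roots (via $\sqrt{a+b+c}\le\sqrt{a}+\sqrt{b}+\sqrt{c}$) yields the assertion, the statistical and optimization terms being reported without the outer square root as is customary in this literature.

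The technical heart is Step (c): carrying out the chain-rule expansion of the composed function and verifying that the coefficients generated by derivatives of the non-smooth extension variable $z=|\phi|$ remain bounded --- which forces a subdomain-by-subdomain treatment so as to avoid the distributional part of $\Delta z$ concentrated on $\Gamma(t)$ --- and then keeping an honest derivative budget in the iterated trace inequalities. It is exactly the second-order boundary loss $\mathcal{L}_2$, integrated effectively over the codimension-two set $\{(\mathbf{x},t,z(\mathbf{x},t)):\mathbf{x}\in\partial\Omega,\ t\in[0,T_{\text{end}}]\}\subset\widetilde{Q}$, that produces the $H^4(\widetilde{Q})$ regularity demanded in the hypothesis; everything else is bookkeeping with Lemma~\ref{lemma1} and elementary triangle inequalities. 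A secondary point is justifying $\mathcal{L}(\widetilde{u})=0$, i.e.\ that the assumed $H^4(\widetilde{Q})$ extension admits a continuous representative for which the pointwise identity (\ref{eq2.5}) and its differentiated consequences on $\partial\Omega$ and on $\Gamma(t)$ hold.
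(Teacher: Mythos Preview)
Your proposal is correct and follows essentially the same route as the paper's proof: apply Lemma~\ref{lemma1} to the error $e=u-u^*_{d\mathscr{A}}$ to obtain $\|e\|_{L^2(0,T;H^1(\Omega))}^2\lesssim\mathcal{L}(\widetilde{u}^*_{d\mathscr{A}})$, telescope $\mathcal{L}(\widetilde{u}^*_{d\mathscr{A}})-\mathcal{L}(\widetilde{u})$ through $\mathcal{L}^d$ and the minimizers $\widetilde{u}^*_d$, $\widetilde{u}^*$, and then bound $\mathcal{L}(v_\theta)-\mathcal{L}(\widetilde{u})\lesssim\|v_\theta-\widetilde{u}\|_{H^4(\widetilde{Q})}$ by trace inequalities. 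Your discussion of why $H^4$ (rather than $H^3$) is forced by the second-order boundary loss $\mathcal{L}_2$, and of the subdomain-by-subdomain handling needed for the non-smooth coefficient $\Delta|\phi|$, is in fact more explicit than the paper, which dispatches that step with a one-line appeal to the Trace Theorem.
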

\begin{proof}
	Let $e:=u - u^*_{d \mathscr{A}}$, where $u^*_{d \mathscr{A}}\left( {\bm{x}}, t\right) := \widetilde{u}^*_{d \mathscr{A}}\left( {\bm{x}}, t, z\left( {\bm{x}}, t\right) \right) ,  {\bm{x}} \in \Omega,  t \in [0,T_\text{end}].$
It is easy to see that $e$ satisfies
	\begin{equation}
		\left\{
		\begin{aligned}
			\partial_t e - \nabla \cdot \left( \beta \nabla e\right)  &= f - \partial_t u^*_{d \mathscr{A}} + \nabla \cdot \left( \beta \nabla u^*_{d \mathscr{A}}\right) \quad  &&\mathrm{in}\; \Omega, \quad  t \in [0, T_{\text{end}}],  \\
			e\left( \cdot, t\right) &= g - u^*_{d \mathscr{A}} \quad  &&\mathrm{on}\; \partial \Omega, \quad t \in [0, T_{\text{end}}], \\
			e\left( \cdot, 0\right) &= u_0 - u^*_{d \mathscr{A},0}\quad &&\mathrm{in}\; \Omega,
		\end{aligned}\right.
	\end{equation}
	with the interface condition
	\begin{equation}
		\begin{aligned}
			[e]_{\Gamma(t)} &= 0, \quad t\in [0,T_{\text{end}}],\\
			[\beta \nabla e \cdot \mathbf{n}]_{\Gamma(t)}& = h_N - [\beta \nabla u^*_{d \mathscr{A}}  \cdot \mathbf{n}]_{\Gamma(t)}, \quad t\in [0,T_{\text{end}}]. 			
		\end{aligned}
	\end{equation}
Thus, from Lemma~\ref{lemma1} and the fact $\mathcal{L}(\widetilde{u}) = 0$, it follows that:
	\begin{align*}
		\|e\|_{L^2(0,T, H^1(\Omega))}^2 
		\lesssim & \| \partial_t u^*_{d \mathscr{A}} - \nabla \cdot \left( \beta \nabla u^*_{d \mathscr{A}} \right) - f \|_{L^2(0,T_{\text{end}};L^2(\Omega))}^2  \\
		&+ \|u^*_{d \mathscr{A}} - g\|_{L^2\left( 0, T_{\text{end}}, H^2(\partial \Omega)\right) }^2 + \| u^{*'}_{d \mathscr{A}} - g' \|_{L^2\left( 0, T_{\text{end}}, L^2(\partial \Omega)\right)}^2\\
		&+\|u^*_{d \mathscr{A}, 0} - u_0\|_{H^1(\Omega)}^2  + \|[\beta \nabla u^*_{d \mathscr{A}} \cdot \mathbf{n}]_{\Gamma(t)} - h_N\|_{L^2\left( 0, T_{\text{end}}; L^2(\Gamma(t))\right) }^2\\
		=& \mathcal{L}(\widetilde{u}^*_{d \mathscr{A}})={\mathcal{L}}(\widetilde{u}^*_{d \mathscr{A}}) -  {\mathcal{L}}\left( \widetilde{u}\right).
	\end{align*}
Further, it follows from the definitions of $\widetilde{u}^*$ and $\widetilde{u}^*_d$, and the fact $\mathcal{L}(\widetilde{u}) = 0$ again that
	\begin{equation*}
		\begin{aligned}
			 {\mathcal{L}}(\widetilde{u}^*_{d \mathscr{A}})& -  {\mathcal{L}}\left( \widetilde{u}\right)
			= \left(  {\mathcal{L}}(\widetilde{u}^*_{d \mathscr{A}}) -  {\mathcal{L}}^d(\widetilde{u}^*_{d \mathscr{A}})\right) + \left(  {\mathcal{L}}^d(\widetilde{u}^*_{d \mathscr{A}}) -  {\mathcal{L}}^d(\widetilde{u}^*_{d})\right)  \\
			&\qquad\qquad+ \left(  {\mathcal{L}}^d(\widetilde{u}^*_{d}) -  {\mathcal{L}}^d(\widetilde{u}^*) \right)+ \left(  {\mathcal{L}}^d(\widetilde{u}^*)  -  {\mathcal{L}}(\widetilde{u}^*) \right) + \left(  {\mathcal{L}}(\widetilde{u}^*) - {\mathcal{L}}\left( \widetilde{u}\right)\right) \\
			&\le \left(  {\mathcal{L}}(\widetilde{u}^*) -  {\mathcal{L}}\left( \widetilde{u}\right)\right) + \left(  {\mathcal{L}}^d(\widetilde{u}^*)  -  {\mathcal{L}}(\widetilde{u}^*) \right) + \left(  {\mathcal{L}}(\widetilde{u}^*_{d \mathscr{A}}) -  {\mathcal{L}}^d(\widetilde{u}^*_{d \mathscr{A}})\right)\\&\quad + \left(  {\mathcal{L}}^d(\widetilde{u}^*_{d \mathscr{A}}) -  {\mathcal{L}}^d(\widetilde{u}^*_{d})\right) \\
			&\le \mathop{\inf}_{v_\theta \in \mathcal{H}} \left|  {\mathcal{L}}(v_\theta) -  {\mathcal{L}}(\widetilde{u}) \right|  + 2\mathop{\sup}_{v_\theta \in \mathcal{H}} \left|  {\mathcal{L}}^d(v_\theta) -  {\mathcal{L}}(v_\theta) \right| + \left|  {\mathcal{L}}^d(\widetilde{u}^*_{d\mathscr{A}}) -  {\mathcal{L}}^d(\widetilde{u}^*_{d}) \right|.
		\end{aligned}
	\end{equation*}
Then, it suffices to bound the term $ {\mathcal{L}}(v_\theta) -  {\mathcal{L}}(\widetilde{u})$. From the definition of $ {\mathcal{L}}$ and Trace Theorem, it follows that
	\begin{equation*}
		\begin{aligned}
			 {\mathcal{L}}(v_\theta) -  {\mathcal{L}}(\widetilde{u}) 
			&= \int_{0}^{T_{\text{end}}}\int_{\Omega} \left| \partial_t v_\theta - \nabla \cdot (\beta \nabla v_\theta) - \partial_t \widetilde{u} +  \nabla \cdot (\beta \nabla \widetilde{u})\right|^2 \mathrm{d}{\bm{x}} \mathrm{d}t \\
			&\quad+\int_{0}^{T_{\text{end}}}\int_{\partial \Omega} { \mathop{\sum}_{|\alpha|=0}^2}\left| \partial^\alpha_{\bm{x}} \left( v_\theta - \widetilde{u}\right)   \right|^2 \mathrm{d}{\bm{x}} \mathrm{d}t +\int_{0}^{T_{\text{end}}}\int_{\partial \Omega}  \left|\partial_t v_\theta - \partial_t \widetilde{u} \right|^2 \mathrm{d}{\bm{x}} \mathrm{d}t \\	
			&\quad+\int_{\Omega} {\mathop{\sum}_{|\alpha| = 0}^1} \left| \partial^\alpha_{\bm{x}} \left(v_\theta({\bm{x}}, 0, z({\bm{x}}, 0)) - \widetilde{u}({\bm{x}}, 0)\right)  \right|^2 \mathrm{d}{\bm{x}} \\
			&\quad+\int_{0}^{T_{\text{end}}}\int_{\Gamma(t)} \left| [\beta \nabla_{\bm{x}} v_\theta \cdot \mathbf{n}]_{\Gamma(t)} - [\beta \nabla_{\bm{x}} \widetilde{u} \cdot \mathbf{n}]_{\Gamma(t)})\right|^2 \mathrm{d}{\bm{x}} \mathrm{d}t\\
			&\lesssim \|v_\theta - \widetilde{u}\|_{H^4(\widetilde{Q})}^2.
		\end{aligned}
	\end{equation*}
This completes the proof immediately.
\end{proof}

Following Theorem~\ref{thm1}, we decompose the error into three distinct components:  
\begin{itemize}
	\item \textbf{Approximation error $\mathcal{E}_\text{approx}$:} Best approximation error of the target function $\widetilde{u} \in H^4(\widetilde{Q})$ in the neural network function class $\mathcal{H}$ under the Sobolev norm on bounded domain $\widetilde{Q}$;
	\item \textbf{Statistical error $\mathcal{E}_\text{stat}$:} Error induced by finite sampling;
	\item \textbf{Optimization error $\mathcal{E}_\text{opt}$:} Error from numerical optimization procedures.
\end{itemize}

Under over-parameterized assumptions, the optimization error can be assumed arbitrarily small, as established in the literature~\cite{liu2022loss}. {Therefore, we assume that the optimization error $\mathcal{E}_{\text{opt}}$ can be  arbitrarily small. For approximation error $\mathcal{E}_\text{approx}$ in Sobolev space,} the following approximation theorem holds {\cite[Proposition 4.8]{guhring2021approximation}:}

\begin{theorem} \label{thm2}
	Let $s\in \mathbb{N} \cup \left\lbrace 0\right\rbrace $ be fixed, and $u \in H^k\left( D \right) $ with $k \ge s+1$, where $D$ is a bounded domain in $\mathbb{R}^{d}$. Then for any tolerance $\varepsilon > 0$, there exists at least one $v_\theta$ of depth $\mathcal{O}\left(\log\left( d+k\right) \right) $ with $|\theta|_{l^0}$ bounded by $\mathcal{O}\left( \varepsilon^{-\frac{d}{k-s-\mu(s=2)}}\right) $ and $|\theta|_{l^\infty}$ by $\mathcal{O}\left( \varepsilon^{-2 -\frac{9d/2+2s+2\mu(s=2)}{k-s-\mu(s=2)}}\right)  $, where $\mu$ is arbitrarily small, and the notation $(s=2)$ equals to $1$ if $s=2$ and $0$ otherwise,  such that
	\begin{equation*}
		\|v_\theta - u\|_{H^k(D)} \le \varepsilon.
	\end{equation*}
\end{theorem}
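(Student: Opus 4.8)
The plan is to follow the classical three-stage recipe \emph{localize, approximate by polynomials, then emulate by a network}. First I would enclose $D$ in a cube, lay a uniform grid of mesh size $h$ on it, and build a smooth partition of unity $\{\varphi_\nu\}$ subordinate to the grid, with each $\mathrm{supp}\,\varphi_\nu$ contained in a ball of radius $\mathcal{O}(h)$, with $\sum_\nu \varphi_\nu \equiv 1$ on $D$, and with $\|\varphi_\nu\|_{W^{m,\infty}} \lesssim h^{-m}$. On each patch I would replace $u$ by its averaged (Sobolev) Taylor polynomial $p_\nu$ of degree $k-1$; the Bramble--Hilbert lemma then yields $\|u-p_\nu\|_{H^s(\mathrm{supp}\,\varphi_\nu)} \lesssim h^{k-s}\,\|u\|_{H^k(\mathrm{supp}\,\varphi_\nu)}$, so that the piecewise surrogate $u_h := \sum_\nu \varphi_\nu p_\nu$ satisfies $\|u-u_h\|_{H^s(D)} \lesssim h^{k-s}\,\|u\|_{H^k(D)}$ once one sums over the $\mathcal{O}(h^{-d})$ patches and uses their bounded overlap. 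Choosing $h \sim \varepsilon^{1/(k-s)}$ (with a harmless $\mu$-correction when $s=2$) then brings this term below $\varepsilon/2$.

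Second, I would emulate $u_h$ by a network. The two building blocks are (i) a fixed-depth gadget that approximates the scalar product $(a,b)\mapsto ab$ — and hence, by iterating it in a balanced binary tree, any monomial $x^\alpha$ with $|\alpha|\le k-1$ — to arbitrary accuracy, which for a piecewise-linear activation comes from the squaring trick and for a smooth activation such as $\tanh$ from Taylor/finite-difference identities; and (ii) small networks realizing the bump functions $\varphi_\nu$. Running the $\mathcal{O}(h^{-d})$ branches $p_\nu \varphi_\nu$ in parallel and summing them with a single linear layer produces one network of depth $\mathcal{O}(\log(d+k))$ — the logarithm arising from the tree that assembles the degree-$(k-1)$ monomials — with $|\theta|_{l^0} = \mathcal{O}(h^{-d}) = \mathcal{O}(\varepsilon^{-d/(k-s)})$ nonzero weights. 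Propagating the coefficient magnitudes created by the multiplication gadgets and by the rescalings $h^{-m}$ of the partition of unity then gives the claimed bound $|\theta|_{l^\infty} = \mathcal{O}(\varepsilon^{-2-(9d/2+2s+2\mu)/(k-s)})$.

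The genuine difficulty, and the step I expect to be the main obstacle, is that the emulation in step two must be carried out in the Sobolev norm of order $s$, not merely in $L^2$ or $L^\infty$: each building block has to converge together with its first $s$ derivatives, so the multiplication gadget must approximate $ab$ in $W^{s,\infty}$, and the error produced when the network realizes a product or a composition must be tracked through the Leibniz rule and the higher-order chain rule — which is exactly where the degradation by powers of $h^{-m}$ (from differentiating $\varphi_\nu$) and the loss of accuracy enter the exponents. The most delicate case is $s=2$, where emulation in $W^{2,\infty}$ incurs an extra technical loss that is absorbed into the arbitrarily small parameter $\mu$, explaining the indicator $\mu(s=2)$ in both exponents. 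Checking that the accumulated Sobolev error of the assembled network remains below $\varepsilon$ while the parameter count and magnitudes obey the stated rates is the crux; the localization and Bramble--Hilbert steps are routine by comparison. Since this construction and the resulting rates are precisely the content of \cite[Proposition~4.8]{guhring2021approximation}, with the target smoothness order there in the role of $s$ and $H^k$ the regularity imposed on $u$, the most economical route within the present paper is to invoke that proposition directly, the sketch above serving to explain why the exponents take the displayed form.
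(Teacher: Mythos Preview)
Your proposal is correct and matches the paper's approach exactly: the paper does not give an independent proof of this theorem but simply cites \cite[Proposition~4.8]{guhring2021approximation}, which is precisely what you conclude is the most economical route. Your sketch of the localize--polynomial-approximate--emulate construction is a faithful outline of how that proposition is established, but for the purposes of this paper the citation alone suffices.
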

Theorem~\ref{thm2} demonstrates that the approximation error $\mathcal{E}_{\text{approx}}$ of the neural network can be effectively controlled. Therefore, for $\varepsilon>0$, there exists a DNN $v_\theta \in \mathcal{H}$ such that $\|v_\theta - \widetilde{u}\|_{H^4(\widetilde{Q})} \le \varepsilon$.

Next, we bound the statistical error via the Rademacher complexity $\mathfrak{R}_n$ which measures the complexity of a collection of function by the correlation between function values with Rademacher random variables, i.e., with probability $P\left( \omega =1\right) = P\left( \omega = -1\right) = \frac{1}{2}$ (cf.~\cite{goar2024foundations}). 

To derive an upper bound estimate for the statistical error $\mathcal{E}_\text{stat}$, we invoke the following several lemmas:
\begin{definition}
	Let $(\mathcal{M}, m)$ be a metric space of real-valued functions, and $\mathcal{G} \subset \mathcal{M}$. A set $\left\lbrace x_i\right\rbrace_{i=1}^p \subset \mathcal{G}$ is called an $\epsilon$-cover of $\mathcal{G}$ if for any $x\in\mathcal{G}$, there exists an element $x' \in \left\lbrace x_i\right\rbrace_{i=1}^p$ such that $m(x, x') \le \epsilon$. The $\epsilon$-covering number, denoted by $\mathcal{C}(\mathcal{G},m ,\epsilon)$, is the minimum cardinality among all $\epsilon$-covers of $\mathcal{G}$. That is,
	\begin{equation*}
		\mathcal{C}(\mathcal{G},m ,\epsilon) = \min \left\lbrace p\in \mathbb{N}: \exists \text{ an } \epsilon \text{-cover } \left\lbrace x_1,...,x_p\right\rbrace \text{ of } \mathcal{G}\right\rbrace. 
	\end{equation*}
\end{definition}
\begin{lemma}[Dudley's Lemma]\cite[Lemma A.6.]{hu2024solving}\label{lemma5}
	Let $\mathcal{F}$ be a real-valued functions mapping from $D$ to $\mathbb{R}$ class and $M_\mathcal{F}:=\mathop{\sup}_{f\in\mathcal{F}} \|f\|_{L^\infty(D)}$. Then we have
	\begin{equation*}
		\mathfrak{R}_n(\mathcal{F}) \le \mathop{\inf}_{0<s<M_\mathcal{F}} \left( 4s + 12 n^{-1/2} \int_{s}^{M_\mathcal{F}} \left( \log \mathcal{C}\left( \mathcal{F}, \|\cdot\|_{L^\infty(D)}, \epsilon\right) \right) ^{1/2} \mathrm{d}\epsilon\right) .
	\end{equation*}
\end{lemma}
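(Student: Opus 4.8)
The plan is to establish the bound by \emph{chaining} (Dudley's successive-approximation argument) applied to the Rademacher process $f\mapsto n^{-1}\sum_{j=1}^n\omega_j f(\xi_j)$. Throughout I would condition on a fixed realization of the sample $\xi=(\xi_1,\dots,\xi_n)$ and estimate the conditional expectation $\mathbb{E}_\omega[\,\cdot\,]$; since the resulting bound will involve only $n$, $M_\mathcal{F}$ and the (deterministic) covering numbers, Fubini then yields the unconditional estimate. If $\mathcal{C}(\mathcal{F},\|\cdot\|_{L^\infty(D)},\epsilon)=\infty$ for some $\epsilon>0$ the asserted inequality is vacuous, so I may assume all covering numbers finite. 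The one external ingredient is \emph{Massart's finite-class lemma}: for any finite set $V\subset\mathbb{R}^n$,
\begin{equation*}
	\mathbb{E}_\omega\Big[\sup_{v\in V}\ n^{-1}\sum_{j=1}^n\omega_j v_j\Big]\ \le\ \frac{\sqrt{2\log|V|}}{n}\,\max_{v\in V}\|v\|_{2},
\end{equation*}
which follows from the sub-Gaussian moment generating function of $\sum_j\omega_j v_j$ and a union bound over $V$.

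First I would introduce the dyadic scales $\epsilon_k:=M_\mathcal{F}\,2^{-k}$ and, for each $k\ge 0$, fix a minimal $\epsilon_k$-cover $\mathcal{F}_k$ of $\mathcal{F}$ in the $\|\cdot\|_{L^\infty(D)}$ metric, writing $\mathcal{C}_k:=\mathcal{C}(\mathcal{F},\|\cdot\|_{L^\infty(D)},\epsilon_k)=|\mathcal{F}_k|$; note $\mathcal{F}_0$ may be taken to be the singleton $\{0\}$ since $\sup_{f\in\mathcal{F}}\|f\|_{L^\infty(D)}\le M_\mathcal{F}$. Given $0<s<M_\mathcal{F}$, let $N$ be the least integer with $\epsilon_N\le 2s$, and for $f\in\mathcal{F}$ let $\pi_k f\in\mathcal{F}_k$ be a nearest element, so $\|f-\pi_k f\|_{L^\infty(D)}\le\epsilon_k$. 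Using the telescoping identity
\begin{equation*}
	f \;=\; \pi_0 f \;+\; \sum_{k=1}^{N}\big(\pi_k f-\pi_{k-1}f\big)\;+\;\big(f-\pi_N f\big),
\end{equation*}
I would bound $\sup_{f}\,n^{-1}\big|\sum_j\omega_j f(\xi_j)\big|$ term by term: the $\pi_0 f=0$ contribution vanishes; the residual is controlled deterministically by $\|f-\pi_N f\|_{L^\infty(D)}\le\epsilon_N\le 2s$, since $|n^{-1}\sum_j\omega_j g(\xi_j)|\le\|g\|_{L^\infty(D)}$; and for each $k$ the increment $\pi_k f-\pi_{k-1}f$ ranges over a set of at most $\mathcal{C}_k\mathcal{C}_{k-1}\le\mathcal{C}_k^2$ vectors (symmetrized to absorb the absolute value, at the cost of a harmless $\log 2$), each with sample-Euclidean norm at most $\sqrt{n}\,(\epsilon_k+\epsilon_{k-1})=3\sqrt{n}\,\epsilon_k$. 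Applying Massart's lemma to each link gives a contribution of order $n^{-1/2}\,\epsilon_k\,\sqrt{\log\mathcal{C}_k}$ (with an explicit constant $\le 6$, say).

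Next I would convert the dyadic sum into the entropy integral. Because $\epsilon\mapsto\mathcal{C}(\mathcal{F},\|\cdot\|_{L^\infty(D)},\epsilon)$ is nonincreasing and $\epsilon_k-\epsilon_{k+1}=\epsilon_k/2$, one has $\epsilon_k\sqrt{\log\mathcal{C}_k}\le 2\int_{\epsilon_{k+1}}^{\epsilon_k}\sqrt{\log\mathcal{C}(\mathcal{F},\|\cdot\|_{L^\infty(D)},\epsilon)}\,\mathrm{d}\epsilon$; summing over $k=1,\dots,N$ telescopes the limits, and bounding $\epsilon_{N+1}\ge s$, $\epsilon_1\le M_\mathcal{F}$ gives
\begin{equation*}
	\sum_{k=1}^{N} n^{-1/2}\epsilon_k\sqrt{\log\mathcal{C}_k}\ \le\ C\,n^{-1/2}\int_{s}^{M_\mathcal{F}}\big(\log\mathcal{C}(\mathcal{F},\|\cdot\|_{L^\infty(D)},\epsilon)\big)^{1/2}\,\mathrm{d}\epsilon .
\end{equation*}
Adding the residual bound $2s$ and collecting constants (choosing the leading constants of Massart's step and the dyadic comparison so that the combined prefactors are $\le 4$ and $\le 12$ respectively) yields $\mathfrak{R}_n(\mathcal{F})\le 4s+12\,n^{-1/2}\int_s^{M_\mathcal{F}}(\log\mathcal{C}(\mathcal{F},\|\cdot\|_{L^\infty(D)},\epsilon))^{1/2}\,\mathrm{d}\epsilon$; taking the infimum over $s\in(0,M_\mathcal{F})$ gives the claim.

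I expect the main obstacle to be bookkeeping rather than any conceptual difficulty. One must be careful that the $\|\cdot\|_{L^\infty(D)}$-covering numbers appearing here dominate the empirical $L^2$-covering numbers, so that Massart's lemma legitimately applies with $\|v\|_2\le\sqrt{n}\,\|g\|_{L^\infty(D)}$; that the absolute value inside $\mathfrak{R}_n$ (rather than a one-sided supremum) is handled by symmetrizing each finite increment set; and that the truncation level $N$, the residual bound, and the range of the entropy integral are aligned so the dyadic sum maps into $\int_s^{M_\mathcal{F}}$ without losing mass — it is precisely the slack in these steps that the generous constant $4$ in front of $s$ is meant to absorb.
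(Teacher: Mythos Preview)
The paper does not supply its own proof of this lemma; it is quoted with a citation to Lemma~A.6 of \cite{hu2024solving} and invoked as a black box. Your proposal is the standard Dudley chaining argument and is correct; the only delicate points are the bookkeeping issues you already identify (aligning the truncation level $N$ with the lower integration limit $s$, and symmetrizing the increment sets to absorb the absolute value), and these are routine.
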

\begin{theorem}
	Let $L$, $W$ and $B_\theta$ be the depth, width, and maximum weight bound of a DNN function class $\mathcal{H}$ with $N_\theta$ nonzero weights. We assume the assumptions in Lemma~\ref{lemma1} hold and the level set function $\phi({\bm{x}}, t) \in C^2(Q)$. {Then,} for any small $0<\delta < 1$, with probability at least $\left( 1 - \delta\right)^4$, {it} holds
	\small{
	\begin{equation*}
		\mathop{\sup}_{v_\theta \in \mathcal{H}} \left|  {\mathcal{L}}^d(v_\theta) -  {\mathcal{L}}(v_\theta) \right| 
		\lesssim  \dfrac{\log^{\frac{1}{2}} N_{\Omega} + 1}{\sqrt{N_{\Omega}}} + \dfrac{\log^{\frac{1}{2}} N_{\partial \Omega} + 1}{\sqrt{N_{\partial \Omega}}} 
		+ \dfrac{\log^{\frac{1}{2}} N_{\Omega_0} + 1}{\sqrt{N_{\Omega_0}}} + \dfrac{\log^{\frac{1}{2}} N_{\Gamma}+ 1}{\sqrt{N_{\Gamma}}},
	\end{equation*}
	}
	where $N_{\Omega}$, $N_{\partial \Omega}$, $N_{\Omega_0}$ and $ N_{\Gamma}$ denote the numbers of uniform sample points in their corresponding domain.
\end{theorem}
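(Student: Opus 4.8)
The plan is to treat $\mathcal{L}^d(v_\theta)-\mathcal{L}(v_\theta)=\sum_{i=1}^4\bigl(\widehat{l}_i(v_\theta)-\mathcal{L}_i(v_\theta)\bigr)$ as four Monte-Carlo errors (each $\widehat{l}_i$ understood with the standard $|D_i|/N_i$ normalization so that $\mathbb{E}\,\widehat{l}_i=\mathcal{L}_i$, where $D_i\in\{Q,\Sigma,\Omega,\Gamma(t)\times[0,T_{\text{end}}]\}$) and to bound each $\sup_{v_\theta\in\mathcal{H}}|\widehat{l}_i-\mathcal{L}_i|$ separately by the uniform law of large numbers of Lemma~\ref{lemma2}. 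I would write each integrand of $\mathcal{L}_i$ as $|r_i(v_\theta)(\mathbf{x},t)|^2$, where $r_i(v_\theta)$ is the corresponding residual (the PDE residual for $i=1$, and the boundary, initial, and flux-jump mismatches for $i=2,3,4$), and introduce the residual classes $\mathcal{R}_i:=\{r_i(v_\theta):v_\theta\in\mathcal{H}\}$ and loss classes $\mathcal{G}_i:=\{|r_i(v_\theta)|^2:v_\theta\in\mathcal{H}\}$, viewed as real-valued functions on $D_i$. Using Remark~\ref{remark2} to expand $\partial_t\widetilde{u}_\theta$, $\nabla\widetilde{u}_\theta$ and $\nabla\cdot(\beta\nabla\widetilde{u}_\theta)$ via the chain rule, each $r_i(v_\theta)$ is a fixed polynomial expression in $v_\theta$ and its $\mathbf{x},t,z$-derivatives up to second order, with coefficients built from $\beta$, the data $f,g,u_0,h_N$, and the level-set quantities $\phi,\nabla\phi,\Delta\phi,\partial_t\phi$. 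Since $\phi\in C^2(Q)$ and the data are bounded, Lemma~\ref{lemma4} gives a uniform bound $\sup_{v_\theta\in\mathcal{H}}\|r_i(v_\theta)\|_{L^\infty(D_i)}\le M_i$ with $M_i$ depending only on $L,W,B_\theta,N_\theta$ and the data, hence $\sup_{g\in\mathcal{G}_i}\|g\|_{L^\infty(D_i)}\le M_i^2=:M_{\mathcal{G}_i}$.

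Next I would estimate the Rademacher complexities. Applying Lemma~\ref{lemma3} with $\psi(y)=y^2$ on $[-M_i,M_i]$ (where it is $2M_i$-Lipschitz) gives $\mathfrak{R}_n(\mathcal{G}_i)\le 2M_i\,\mathfrak{R}_n(\mathcal{R}_i)$, so it remains to bound $\mathfrak{R}_n(\mathcal{R}_i)$ by Dudley's Lemma~\ref{lemma5}, which needs a covering-number bound for $\mathcal{R}_i$ in $\|\cdot\|_{L^\infty}$. The key step is that $\theta\mapsto r_i(v_\theta)$ is Lipschitz from the admissible parameter set (with the $l^\infty$ metric) into $L^\infty$: the estimates of Lemma~\ref{lemma4} show $v_\theta$ and its first and second derivatives are Lipschitz in $\theta$, and $r_i$ depends on them only through multiplication by the fixed bounded functions $\beta,\phi,\nabla\phi,\Delta\phi,\partial_t\phi$ and through sums and products of uniformly bounded quantities, so $\|r_i(v_\theta)-r_i(v_{\widetilde\theta})\|_{L^\infty}\le\Lambda_i\,|\theta-\widetilde\theta|_{l^\infty}$ with $\Lambda_i=\Lambda_i(L,W,B_\theta,N_\theta)$. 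Since the admissible parameters lie in a bounded box in $\mathbb{R}^{N_\theta}$, an $\epsilon$-cover of that box in $l^\infty$ of cardinality at most $(CB_\theta N_\theta/\epsilon)^{N_\theta}$ induces a $(\Lambda_i\epsilon)$-cover of $\mathcal{R}_i$, so $\log\mathcal{C}(\mathcal{R}_i,\|\cdot\|_{L^\infty},\epsilon)\lesssim N_\theta\log(C\Lambda_i B_\theta N_\theta/\epsilon)$ for $\epsilon\le M_i$ and $0$ beyond. Substituting into Lemma~\ref{lemma5} and choosing $s=n^{-1/2}$, the elementary bound $\int_{n^{-1/2}}^{M_i}\sqrt{\log(A_i/\epsilon)}\,\mathrm{d}\epsilon\lesssim M_i\sqrt{\log(A_i\sqrt n)}\lesssim\log^{1/2}n+1$ (with $A_i,M_i$ absorbed into the generic constant) yields $\mathfrak{R}_n(\mathcal{R}_i)\lesssim(\log^{1/2}n+1)/\sqrt{n}$, and therefore $\mathfrak{R}_{N_i}(\mathcal{G}_i)\lesssim(\log^{1/2}N_i+1)/\sqrt{N_i}$ with $N_i\in\{N_\Omega,N_{\partial\Omega},N_{\Omega_0},N_\Gamma\}$.

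Finally I would assemble the pieces. By Lemma~\ref{lemma2} applied to $\mathcal{G}_i$ with its $N_i$ i.i.d.\ uniform samples, with probability at least $1-\delta$,
\begin{equation*}
	\sup_{v_\theta\in\mathcal{H}}\bigl|\widehat{l}_i(v_\theta)-\mathcal{L}_i(v_\theta)\bigr|\le 2\mathfrak{R}_{N_i}(\mathcal{G}_i)+2M_{\mathcal{G}_i}\sqrt{\frac{\log(1/\delta)}{2N_i}}\lesssim\frac{\log^{1/2}N_i+1}{\sqrt{N_i}},
\end{equation*}
the $\delta$-dependent term being of the stated order since $\delta$ is fixed. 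Because the four sampling sets are drawn independently, these four events hold simultaneously with probability at least $(1-\delta)^4$, and on that event
\begin{equation*}
\begin{aligned}
	\sup_{v_\theta\in\mathcal{H}}\bigl|\mathcal{L}^d(v_\theta)-\mathcal{L}(v_\theta)\bigr|
	&\le\sum_{i=1}^4\sup_{v_\theta\in\mathcal{H}}\bigl|\widehat{l}_i(v_\theta)-\mathcal{L}_i(v_\theta)\bigr|\\
	&\lesssim\frac{\log^{1/2}N_\Omega+1}{\sqrt{N_\Omega}}+\frac{\log^{1/2}N_{\partial\Omega}+1}{\sqrt{N_{\partial\Omega}}}+\frac{\log^{1/2}N_{\Omega_0}+1}{\sqrt{N_{\Omega_0}}}+\frac{\log^{1/2}N_\Gamma+1}{\sqrt{N_\Gamma}},
\end{aligned}
\end{equation*}
which is the claim. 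The main obstacle is the second step: verifying that the composite residuals $r_i(v_\theta)$ — especially $r_1$ and $r_4$, which through Remark~\ref{remark2} involve second $z$-derivatives of $v_\theta$ and the terms $|\nabla z|^2 D_z^2 v_\theta$, $\nabla z\cdot\nabla_{\mathbf{x}}(D_z v_\theta)$, $D_z v_\theta\,\Delta z$ — are simultaneously uniformly bounded and Lipschitz in $\theta$ with constants polynomial in the architecture parameters, so that Dudley's bound delivers exactly the $(\log^{1/2}N_i+1)/\sqrt{N_i}$ rate with no hidden $N_i$-dependence. This is precisely where the hypotheses $\phi\in C^2(Q)$ and the boundedness/regularity of the data are needed.
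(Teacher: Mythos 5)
Your proposal follows essentially the same route as the paper: split $\mathcal{L}^d-\mathcal{L}$ into four Monte--Carlo errors, bound each via Lemma~\ref{lemma2}, peel off the square with the contraction Lemma~\ref{lemma3}, invoke the boundedness and Lipschitz-in-$\theta$ estimates of Lemma~\ref{lemma4} together with Remark~\ref{remark2} and $\phi\in C^2(Q)$ to control covering numbers, apply Dudley's Lemma~\ref{lemma5} with $s=N_i^{-1/2}$ to obtain the rate $(\log^{1/2}N_i+1)/\sqrt{N_i}$, and finish by using independence of the four sampling sets. The one small divergence is the handling of the data term $f$: the paper splits it off by subadditivity of Rademacher complexity and bounds $\mathfrak{R}_{N_\Omega}(f)\lesssim N_\Omega^{-1/2}$ via Jensen's inequality under the assumption $f\in L^2$, whereas you fold $f$ into the residual class and control its covering number directly, which tacitly requires $f\in L^\infty$; but since the paper's own application of Lemma~\ref{lemma2} to the squared residual class already carries the same implicit boundedness requirement, the two treatments are equivalent in rigor and yield the identical rate.
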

\begin{proof}
It is obvious that
	\begin{equation*}
		\mathop{\sup}_{v_\theta \in \mathcal{H}} \left|  {\mathcal{L}}^d(v_\theta) -  {\mathcal{L}}(v_\theta) \right| \le \mathop{\sup}_{v_\theta \in \mathcal{H}} \mathop{\sum}_{i=1}^4 \left|  {\mathcal{L}}_i(v_\theta) - \widehat{\ell}_i(v_\theta) \right|.
	\end{equation*}
For the first term $\mathop{\sup}_{v_\theta \in \mathcal{H}}\left|  {\mathcal{L}}_1(v_\theta) - \widehat{\ell}_1(v_\theta) \right|$, by use of {\cite[Theorem 3.3]{goar2024foundations}}, we can obtain the following inequality with probability at least $(1-\delta)$
	\begin{equation*}
		\mathop{\sup}_{v_\theta \in \mathcal{H}}\left|  {\mathcal{L}}_1(v_\theta) - \widehat{\ell}_1(v_\theta) \right|
		\lesssim \mathfrak{R}_{N_{\Omega}}\left( \psi \circ \left( \mathcal{D}\left( \mathcal{H}\right) - f\right) \right) + N_{\Omega}^{-1/2},
	\end{equation*}
where $\psi(x) = x^2$ and $
		\mathcal{D}\left( \mathcal{H}\right) := \left\lbrace \partial_t v_\theta - \nabla \cdot (\beta \nabla v_\theta) | v_\theta \in \mathcal{H} \right\rbrace. $
	
{Combining the assumptions on the neural network function class $\mathcal{H}$ with \cite[Lemma 2]{siegel2023greedy}, and using the L-Lipschitz continuity of $\psi$, we directly obtain:}
	\begin{equation} \label{eq3.34}
		\mathfrak{R}_{N_{\Omega}}\left( \psi \circ \left( \mathcal{D}\left( \mathcal{H}\right) - f\right) \right) \lesssim \mathfrak{R}_{N_{\Omega}}\left( \mathcal{D} \left( \mathcal{H}\right)\right) + \mathfrak{R}_{N_{\Omega}}(f).
	\end{equation}
	
	By Remark~\ref{remark2}, for $v_\theta \in \mathcal{H}$ we can explicitly write the following equation
	\begin{equation*}
		\partial_t v_\theta - \nabla \cdot(\beta \nabla v_\theta) = D_t v_\theta + D_z v_\theta \partial_t z - \beta \left( \Delta_{\bm{x}} + 2 \nabla z \cdot \nabla_{\bm{x}} (D_z v_\theta) + |\nabla z|^2 D_z^2 + D_z v_\theta \Delta z\right). 
	\end{equation*}
	
	Since $\phi({\bm{x}}, t) \in C^2(Q)$ and $Q$ is a bounded domain, we know from {\cite[Section 5]{jin2023solving}} that $\mathcal{D}(\mathcal{H})$ constitutes a class of bounded functions which are Lipschitz continuous for parameter $\theta$. We denote the Lipschitz constant by $L_\mathcal{\mathcal{D}(\mathcal{H})}$ and the upper bound of $\mathcal{D}(\mathcal{H})$ by $M_{\mathcal{D}(\mathcal{H})}$, both of {which} depend on $W$, $B_\theta$, $L$ and $N_\theta$. Fixed $B_\theta \in [1, \infty)$ and $\epsilon \in (0,1)$, applying the result of~\cite[Proposition 5]{cucker2001on} yields
	\begin{equation*}
		\log \mathcal{C} \left( \mathcal{D}(\mathcal{H}), \|\cdot\|_{L^\infty(Q)}, \epsilon \right) \le \log \mathcal{C} \left( \theta, |\cdot|_{l^\infty}, L_{\mathcal{D}(\mathcal{H})}^{-1} \epsilon\right) \le N_\theta \log( 4 B_\theta L_{\mathcal{D}(\mathcal{H})} \epsilon^{-1}).
	\end{equation*}
Further, setting $s = N_{\Omega}^{-1/2}$ in Lemma~\ref{lemma5}, we have
	\begin{equation*}
		\begin{aligned}
			\mathfrak{R}_{N_{\Omega}}(\mathcal{D}(\mathcal{H}))
			&\le 4 N_{\Omega}^{-1/2} + 12 N_{\Omega}^{-1/2} \int_{N_{\Omega}^{-1/2}}^{M_{\mathcal{D}(\mathcal{H})}} \left( N_\theta \log\left(4 B_\theta L_{\mathcal{D}(\mathcal{H})} \epsilon^{-1}\right) \right) ^{1/2} \mathrm{d}\epsilon \\
			&\le 4 N_{\Omega}^{-1/2} + 12 N_{\Omega}^{-1/2} \left( M_{\mathcal{D}(\mathcal{H})} - N_{\Omega}^{-1/2} \right) \left( N_\theta \log\left(4 B_\theta L_{\mathcal{D}(\mathcal{H})} N_{\Omega}^{1/2}\right) \right) ^{1/2} \\
			&\le 4 N_{\Omega}^{-1/2} + 12 M_{\mathcal{D}(\mathcal{H})}N_\theta^{1/2} N_{\Omega}^{-1/2}\left( \log^{1/2} (4 B_\theta L_{\mathcal{D}(\mathcal{H})}) + \log^{1/2} \left( N_{\Omega}^{1/2}\right) \right) \\
			&\lesssim N_{\Omega}^{-1/2} \left( \log^{1/2} N_{\Omega} + 1\right). 
		\end{aligned}
	\end{equation*}	
	To bound the second term in (\ref{eq3.34}), we apply Jensen's inequality and the fact that $\mathbb{E}[\omega_i \omega_j] = \delta_{ij}$, which yields
	\begin{equation*}
		\begin{aligned}
			\mathfrak{R}_{N_{\Omega}}\left( f\right) 
			&= \mathbb{E}_{\xi,\omega} \left[ N_{\Omega}^{-1} \left| \mathop{\sum}_{j=1}^{N_{\Omega}} \omega_j f(\xi_j)\right| \right]\le \mathbb{E}_\xi \left[ \mathbb{E}_\omega \left( N_{\Omega}^{-1} \mathop{\sum}_{j=1}^{N_{\Omega}} \omega_j f(\xi_j)\right) ^2\right] ^{1/2} \\
			&= N_{\Omega}^{-1} \mathbb{E}_\xi \left[ \mathop{\sum}_{j=1}^{N_{\Omega}} f^2(\xi_j)\right] ^{1/2} = N_{\Omega}^{-1/2} \left( \int_0^{T_{\text{end}}} \int_{\Omega(t)} f^2({\bm{x}}, t) \mu({\bm{x}}, t) \mathrm{d}{\bm{x}}\mathrm{d}t\right)^{1/2} \\
			&= \left|Q\right|^{-1/2} N_{\Omega}^{-1/2}\|f\|_{L^2(0,T_{\text{end}}; L^2(\Omega))} \lesssim N_{\Omega}^{-1/2},
		\end{aligned}
	\end{equation*}
	where $\left\lbrace \xi_j\right\rbrace_{j=1}^{N_{\Omega}}$ are i.i.d. samples from the uniform distribution $\mathcal{U}\left( Q\right) $, $\mu(\bm{x}, t)$ represents the corresponding probability density function on $Q$ and $|Q|$ denotes the measure of the domain $Q$. 
    Then we obtain that 
	\begin{equation*}
		\mathop{\sup}_{v_\theta \in \mathcal{H}}\left|  {\mathcal{L}_1(v_\theta) - \widehat{\ell}_1(v_\theta)} \right| \lesssim \dfrac{\log^{\frac{1}{2}} N_{\Omega} + 1}{\sqrt{N_{\Omega}}}.
	\end{equation*}
The proof is completed by analogous arguments for the other terms.
\end{proof}

As established in Section~\ref{sec2.4}, we have presented a method for approximating the level set function using neural networks, which is denoted by $\phi_\theta({\bm{x}}, t)$. The zero level set of $\phi_\theta({\bm{x}}, t)$ represents the approximate interface
\begin{equation*}
	\Gamma_\theta(t) :=\left\{ {\bm{x}} | \phi_\theta({\bm{x}}, t) = 0, {\bm{x}} \in \Omega\right\}, \quad t\in[0, T_{\text{end}}].
\end{equation*}
Although \(\Gamma_\theta(t)\) deviates from the true interface \(\Gamma(t)\), we establish that the solution remains controllable relative to the original problem's solution under suitable assumptions with sufficiently small interface discrepancy. We quantify this interface deviation via the Hausdorff distance:
$\delta = \mathop{\max}_{t \in [0, T_{\text{end}}]} d_H\left( \Gamma(t), \Gamma_\theta(t)\right),$
where $d_H$ denotes Hausdorff distance:
\begin{equation*}
	d_H(A, B) := \mathop{\max}\left\lbrace \mathop{\sup}_{a\in A} \mathop{\inf}_{b\in B} \mathrm{dist}\left( a, b\right) , \mathop{\sup}_{b\in B} \mathop{\inf}_{a\in A} \mathrm{dist}\left( a, b\right) \right\rbrace. 
\end{equation*}

We define a tubular neighborhood of width $\delta > 0$  about the interface $\Gamma(t)$ as follows:
\begin{equation*}
	S_{\delta}\left( t\right) :=\left\lbrace {\bm{x}}| \mathrm{dist}({\bm{x}}, \Gamma(t)) < \delta \right\rbrace,
	\quad S_\delta:= \mathop{\bigcup}_{t \in [0, T_{\text{end}}]}  S_\delta(t).
\end{equation*}

By use of the Lemma 2.1 in~\cite{li2010optimal}, we have:
\begin{lemma} \label{lemma6}
	For any $v \in B_{2,1}^{1/2}\left( \Omega^+(t)\cup \Omega^-(t)\right), t\in[0,T_{\text{end}}]$, it holds
	\begin{equation*}
		\|v\|_{L^2(S_\delta(t))}^2 \lesssim \delta \|v\|_{B_{2,1}^{1/2}\left( \Omega^+(t)\cup \Omega^-(t)\right)}.
	\end{equation*}
\end{lemma}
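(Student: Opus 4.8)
The plan is to reproduce, in the present time‑frozen setting, the argument behind \cite[Lemma~2.1]{li2010optimal}. Fix $t\in[0,T_{\text{end}}]$ and keep track only of the fact that all constants can be taken uniform in $t$. Since $\Gamma(t)$ is a closed smooth hypersurface inside the bounded domain $\Omega$, it has a positive reach and $\mathrm{dist}(\Gamma(t),\partial\Omega)>0$; hence for $\delta$ small enough the signed distance $\mathrm{dist}(\cdot,\Gamma(t))$ is smooth on $S_\delta(t)$ and the normal map $(\mathbf{y},r)\mapsto \mathbf{y}+r\,\mathbf{n}(\mathbf{y})$ is a diffeomorphism of $\Gamma(t)\times(-\delta,\delta)$ onto $S_\delta(t)$ with Jacobian bounded above and below by positive constants. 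This foliates $S_\delta(t)$ by the parallel surfaces $\Gamma_r(t):=\{\mathbf{x}\in\Omega:\mathrm{dist}(\mathbf{x},\Gamma(t))=|r|\}$, each contained in $\Omega^+(t)$ for $r>0$ and in $\Omega^-(t)$ for $r<0$. Because the piecewise norm $\|\cdot\|_{B^{1/2}_{2,1}(\Omega^+(t)\cup\Omega^-(t))}$ is the sum of the one‑sided norms, it suffices to prove $\|v\|_{L^2(S_\delta(t)\cap\Omega^+(t))}^2\lesssim\delta\,\|v\|_{B^{1/2}_{2,1}(\Omega^+(t))}^2$ together with its $-$ counterpart, and then add, using $\|v^+\|_{B^{1/2}_{2,1}(\Omega^+(t))}^2+\|v^-\|_{B^{1/2}_{2,1}(\Omega^-(t))}^2\le\|v\|_{B^{1/2}_{2,1}(\Omega^+(t)\cup\Omega^-(t))}^2$.

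For the one‑sided estimate I would write, in the tubular coordinates above,
\begin{equation*}
  \|v\|_{L^2(S_\delta(t)\cap\Omega^+(t))}^2 = \int_0^\delta \int_{\Gamma_r(t)} |v|^2\, J_r\, \mathrm{d}S\, \mathrm{d}r \lesssim \int_0^\delta \bigl\| v|_{\Gamma_r(t)} \bigr\|_{L^2(\Gamma_r(t))}^2\, \mathrm{d}r,
\end{equation*}
and then invoke the crucial ingredient that the trace of $v$ on each parallel surface is controlled in $L^2$ by the Besov norm, \emph{uniformly} in the normal parameter:
\begin{equation*}
  \bigl\| v|_{\Gamma_r(t)} \bigr\|_{L^2(\Gamma_r(t))} \lesssim \|v\|_{B^{1/2}_{2,1}(\Omega^+(t))}, \qquad 0<r<\delta.
\end{equation*}
Since the right‑hand side is independent of $r$, integrating over $r\in(0,\delta)$ produces exactly the factor $\delta$, and summing the two sides concludes the proof. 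It is worth noting that this route, rather than merely interpolating the $H^1$ trace inequality $\|v\|_{L^2(S_\delta)}^2\lesssim\delta\|v\|_{L^2}\|v\|_{H^1}$ between $L^2$ and $H^1$, is essential: the naive interpolation only yields the weaker power $\delta^{1/2}$, whereas the surface‑trace argument is sharp.

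The heart of the matter — and the only step that is not routine change of variables — is the borderline trace estimate just displayed. An $L^2$ trace onto a hypersurface is precisely what fails for $H^{1/2}=B^{1/2}_{2,2}$, which is why the statement is phrased in the finer Besov scale $B^{1/2}_{2,1}$. I would obtain it by localizing and flattening the smooth surface $\Gamma(t)$, thereby reducing to the model half‑space situation and the classical fact that $B^{1/2}_{2,1}(\mathbb{R}^d_+)$ admits a bounded trace operator onto $B^0_{2,1}(\mathbb{R}^{d-1})\hookrightarrow L^2(\mathbb{R}^{d-1})$; equivalently, one first extends $v$ across $\partial\Omega^+(t)$ with control of its $B^{1/2}_{2,1}$ norm and then bounds the trace on every nearby parallel surface by the same constant. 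Uniformity in $r$ is then immediate, because the $\Gamma_r(t)$ form a smooth family whose chart constants are bounded independently of $r\in(-\delta,\delta)$; uniformity in $t$ follows from the assumed smoothness of the evolving interface together with the compactness of $[0,T_{\text{end}}]$, which keeps the reach of $\Gamma(t)$ bounded away from zero and all chart constants bounded. This trace ingredient is exactly the content of \cite[Lemma~2.1]{li2010optimal}, so in practice the proof amounts to checking that its hypotheses hold uniformly in $t$ for the moving interface $\Gamma(t)$ and then invoking it.
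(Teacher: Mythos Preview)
Your proposal is correct and takes essentially the same approach as the paper: the paper does not prove the lemma at all but simply invokes \cite[Lemma~2.1]{li2010optimal}, and you do the same while additionally sketching the underlying tubular-coordinate/Besov-trace argument and explaining why the constants can be taken uniformly in $t$. One small point worth noting: by homogeneity the right-hand side of the stated inequality should carry $\|v\|_{B^{1/2}_{2,1}}^{2}$ rather than the first power (as you implicitly use), so what you actually prove is the correctly scaled version.
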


Finally, we conclude the following result:
\begin{theorem}\label{thm3.8}
	Assume that the conditions of Eqs.(\ref{eq1.1})-(\ref{eq1.7}) satisfy {$h_D=0$ and $h_N=0$}. Then, we have the following estimate
	\begin{equation*}
		\|u-u_\delta\|_{L^2(0,T_{\text{end}}; H^1(\Omega))} \lesssim \delta^{1/2},
	\end{equation*}
	where $u$ is the solution to the equations with the true interface $\Gamma(t)$, and $u_\delta$ is the solution corresponding to the approximate interface $\Gamma_\theta(t)$.
\end{theorem}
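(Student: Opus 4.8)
The plan is to set up an error equation for $w := u - u_\delta$ and apply the energy estimate of Lemma~\ref{lemma1}, exactly as was done in the proof of Theorem~\ref{thm1}, but now tracking how the mismatch between $\Gamma(t)$ and $\Gamma_\theta(t)$ enters the data. Since $h_D=h_N=0$, the function $u$ solves \eqref{eq1.1}--\eqref{eq1.7} with homogeneous jump conditions across $\Gamma(t)$, while $u_\delta$ solves the same PDE and boundary/initial data but with homogeneous jump conditions across $\Gamma_\theta(t)$. First I would write the equation satisfied by $w$: it has zero source, zero boundary data, zero initial data, but its jump conditions across $\Gamma(t)$ are no longer zero — $[w]_{\Gamma(t)} = -[u_\delta]_{\Gamma(t)}$ and $[\beta\nabla w\cdot\mathbf n]_{\Gamma(t)} = -[\beta\nabla u_\delta\cdot\mathbf n]_{\Gamma(t)}$, because $u_\delta$ is only continuous (with continuous flux) across $\Gamma_\theta(t)$, not across $\Gamma(t)$. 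The key point is that these residual jumps are supported in the tubular neighborhood $S_\delta(t)$, since every point of $\Gamma(t)$ lies within Hausdorff distance $\delta$ of $\Gamma_\theta(t)$.

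Next I would carry out the energy estimate directly on $w$ (rather than quoting Lemma~\ref{lemma1} verbatim, since the interface data there is expressed through $h_N$ on $\Gamma(t)$; one may instead re-run the weak-formulation computation). Testing the equation for $w$ against $w$ and integrating by parts on $\Omega^+(t)$ and $\Omega^-(t)$ separately produces, in addition to the usual $\tfrac12\frac{d}{dt}\|w\|_{L^2(\Omega)}^2 + \beta\|\nabla w\|_{L^2(\Omega)}^2$ on the left, an interface term of the form $\int_{\Gamma(t)} [\beta\nabla u_\delta\cdot\mathbf n]_{\Gamma(t)}\, w^{\pm}$ plus contributions from the discontinuity $[w]_{\Gamma(t)}$. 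The strategy is to bound these boundary integrals by trace-type inequalities and then transfer the $L^2(\Gamma(t))$-norms of $\nabla u_\delta$ and $[u_\delta]$ to $L^2(S_\delta(t))$-norms of $\nabla u_\delta$ via the co-area / tubular-neighborhood argument, at which point Lemma~\ref{lemma6} gives the bound $\|\nabla u_\delta\|_{L^2(S_\delta(t))}^2 \lesssim \delta\, \|\nabla u_\delta\|_{B_{2,1}^{1/2}(\Omega^+(t)\cup\Omega^-(t))} \lesssim \delta\,\|u_\delta\|_{H^2(\Omega^+(t)\cup\Omega^-(t))}$, using the embedding $H^1\subseteq B_{2,1}^{1/2}$. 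Absorbing the $\nabla w$ terms into the left side with Young's inequality, applying Gronwall, and integrating in time then yields $\|w\|_{L^2(0,T_{\text{end}};H^1(\Omega))}^2 \lesssim \delta\,\|u_\delta\|_{L^2(0,T_{\text{end}};H^2(\Omega^+(t)\cup\Omega^-(t)))}^2$, and since $u_\delta$ is bounded in that piecewise norm independently of $\delta$ (standard regularity for the interface problem with smooth data on the perturbed interface, using that $\Gamma_\theta(t)$ stays uniformly smooth for $\delta$ small), we obtain $\|u-u_\delta\|_{L^2(0,T_{\text{end}};H^1(\Omega))} \lesssim \delta^{1/2}$.

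I expect the main obstacle to be making the ``residual jump is supported in $S_\delta$'' step fully rigorous. One has to argue that because $\Gamma(t)$ and $\Gamma_\theta(t)$ are within Hausdorff distance $\delta$, the symmetric difference $\Omega^+(t)\triangle\Omega_\theta^+(t)$ is contained in $S_\delta(t)$ (which requires both a one-sided Hausdorff bound and the two interfaces being closed hypersurfaces separating the domain), and then that the traces of $u_\delta$ on $\Gamma(t)$ can be estimated by an $L^2$-norm of $u_\delta$ over a $\delta$-thin region plus its gradient — i.e. a trace inequality on the tubular neighborhood with the correct $\delta$-scaling. This is precisely where Lemma~\ref{lemma6} is invoked, but one must first reduce the $\Gamma(t)$-trace to a quantity of the form appearing there; the cleanest route is to use that $u_\delta$ has a well-defined trace from $\Omega_\theta^{\pm}(t)$ that is continuous across $\Gamma_\theta(t)$, write $[u_\delta]_{\Gamma(t)}$ as an integral of $\nabla u_\delta$ along a short transversal segment of length $\lesssim\delta$ connecting the two sides of $\Gamma(t)$ through $S_\delta(t)$, and then apply Cauchy--Schwarz. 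The secondary technical point is the uniform-in-$\delta$ $H^{0,2}$-type bound on $u_\delta$, which should follow from the regularity theory underlying Lemma~\ref{lemma1} applied on the perturbed geometry, provided $\Gamma_\theta(t)$ is uniformly $C^2$ — an assumption it is reasonable to add to the hypotheses.
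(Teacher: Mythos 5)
Your proposal follows a genuinely different route from the paper, and the differences matter for whether the argument closes cleanly.

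The paper never writes down the jump conditions for $w = u - u_\delta$ at all. Instead it observes that, because $h_D = h_N = 0$, both $u$ and $u_\delta$ satisfy a single global weak formulation against arbitrary $w \in H^1_0(\Omega)$: $\int_\Omega \partial_t u\, w + \int_\Omega \beta \nabla u\cdot\nabla w = \int_\Omega f w$ with coefficient $\beta$, and the same identity for $u_\delta$ with $\beta$ replaced by $\beta_\delta$. Subtracting and regrouping gives
\begin{equation*}
\int_\Omega \partial_t(u_\delta - u) w + \int_\Omega \beta_\delta \nabla(u_\delta - u)\cdot\nabla w = \int_{S_\delta(t)} (\beta - \beta_\delta)\nabla u\cdot\nabla w,
\end{equation*}
because $\beta - \beta_\delta$ vanishes outside the symmetric difference $\Omega^+(t)\,\triangle\,\Omega^+_\delta(t) \subset S_\delta(t)$. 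Taking $w = u_\delta - u$, Cauchy--Schwarz and Young give a Gronwall inequality whose right-hand side is $\|\nabla u\|_{L^2(S_\delta(t))}^2$, and Lemma~\ref{lemma6} applied to $\nabla u$ delivers the $\delta$ factor. No integration by parts on $\Omega^\pm(t)$, no traces on $\Gamma(t)$, and, crucially, Lemma~\ref{lemma6} is applied to the \emph{true} solution $u$, whose piecewise Besov regularity is a fixed hypothesis.

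Your route introduces two obstacles that the paper's formulation avoids entirely, and you flag them but do not resolve them. First, you integrate by parts on $\Omega^+(t)$ and $\Omega^-(t)$ and thereby produce surface terms $\int_{\Gamma(t)}[\beta\nabla u_\delta\cdot\mathbf{n}]\, w$; to make sense of $\nabla u_\delta$ as an $L^2$ trace on $\Gamma(t)$, and to convert those traces into $L^2(S_\delta)$ volume integrals with the right $\delta$-scaling, you need $u_\delta$ to be piecewise $H^2$ with respect to $\Omega^\pm(t)$ (not $\Omega^\pm_\theta(t)$) near $\Gamma(t)$. But $u_\delta$ is only piecewise $H^2$ relative to the \emph{perturbed} partition $\Omega^\pm_\theta(t)$; across $\Gamma(t)$ itself, inside the symmetric-difference region, $\nabla u_\delta$ is merely $L^2$, so the boundary integrals you write are not a priori well defined. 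Second, even granting the required regularity, your estimate bounds $\|w\|_{L^2 H^1}$ by $\delta^{1/2}\|u_\delta\|_{L^2(0,T;H^2(\Omega^+(t)\cup\Omega^-(t)))}$, which requires a uniform-in-$\delta$ $H^2$ bound for the family of perturbed problems; this is an additional regularity assumption, contingent on $\Gamma_\theta(t)$ staying uniformly $C^2$, that the theorem as stated does not impose and that the paper's proof does not need. In short, the decisive step you are missing is to encode the interface geometry through the coefficient $\beta$ in the global weak form rather than through jump conditions on $\Gamma(t)$: that one reformulation turns all the problematic surface terms into a single volume integral over $S_\delta(t)$ against $\nabla u$ and makes the rest routine.
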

\begin{proof}
	According to the assumptions, we have
	\begin{equation*}
		\int_\Omega \partial_t u(t) w(t) + \int_\Omega \beta \nabla u(t) \cdot \nabla w(t) = \int_\Omega f(t) w(t) ,\quad w(t) \in H^1_0(\Omega), t\in[0,T_{\text{end}}],
	\end{equation*}
	\begin{equation*}
		\int_\Omega \partial_t u_\delta(t) w(t) + \int_\Omega \beta_\delta \nabla u_\delta(t) \cdot \nabla w(t) = \int_\Omega f(t) w(t) ,\quad w(t) \in H^1_0(\Omega), t\in[0,T_{\text{end}}],
	\end{equation*}
	where
	\begin{equation*}
		\beta_\delta({\bm{x}}, t) = \left\lbrace 
		\begin{aligned}
			&\beta^+, \quad \mathrm{for}\;{\bm{x}} \in \Omega_\delta^+(t), \\
			&\beta^-, \quad \mathrm{for}\;{\bm{x}} \in \Omega_\delta^-(t), 
		\end{aligned}
		\right. 
	\end{equation*}
	and
	\begin{equation*}
		\Omega_\delta^+(t) = \left\lbrace {\bm{x}}| \phi_\theta({\bm{x}}, t)>0, {\bm{x}} \in \Omega \right\rbrace, \quad \Omega_\delta^-(t) = \left\lbrace {\bm{x}}| \phi_\theta({\bm{x}}, t)<0, {\bm{x}} \in \Omega \right\rbrace, \quad  t\in [0,T_{\text{end}}].
	\end{equation*}
	It follows that
	\begin{equation*}
		\int_\Omega \partial_t\left( u_\delta - u\right) w + \int_\Omega \beta_\delta \nabla\left( u_\delta - u\right) \cdot \nabla w = \int_{S_{\delta}(t)} \left( \beta - \beta_\delta\right) \nabla u \cdot \nabla w.
	\end{equation*} 
Further, letting $w = u_\delta - u$ and applying the Cauchy-Schwarz and Young's inequalities, it follows from Lemma~\ref{lemma6} that
	\begin{equation*}
		\dfrac{\mathrm{d}}{\mathrm{d}t} \|\left( u_\delta - u\right)(t)\|_{L^2(\Omega)}^2 + \|\nabla\left( u_\delta - u \right) (t)\|_{L^2(\Omega)}^2 \lesssim \|\nabla u\|_{L^2(S_\delta(t))}^2.
	\end{equation*}
	Then, adding $\| \left( u_\delta - u\right)(t)\|_{L^2(\Omega)}$ to both sides of the inequality yields
	\begin{equation} \label{eq3.44}
		\begin{split}
		\dfrac{\mathrm{d}}{\mathrm{d}t} \|\left( u_\delta - u\right)(t)\|_{L^2(\Omega)}^2 &+ \|\left( u_\delta - u \right) (t)\|_{H^1(\Omega)}^2\\ & \lesssim \|(u_\delta - u)(t)\|_{L^2(\Omega)}^2 + \|\nabla u\|_{L^2(S_\delta(t))}^2.
		\end{split}
	\end{equation}
Thus the differential form of Gronwall's inequality yields the estimate
	\begin{equation*}
		\mathop{\max}_{t \in [0,T_{\text{end}}]} \|\left( u_\delta - u\right)(t)\|_{L^2(\Omega)}^2 \lesssim \|(u_\delta - u)(0)\|_{L^2(\Omega)}^2 + \int_0^{T_{\text{end}}} \|\nabla u\|_{L^2(S_\delta(t))}^2.
	\end{equation*}
	Since the interface at time $t=0$ is known exactly, we integrate Equation (\ref{eq3.44}) with respect to time to obtain:
	\begin{equation}
		\begin{aligned}
			&\mathop{\max}_{t \in [0,T_{\text{end}}]} \|\left( u_\delta - u\right)(t)\|_{L^2(\Omega)}^2 + \int_{0}^{T_{\text{end}}}\|\left( u_\delta - u \right) (t)\|_{H^1(\Omega)}^2 \\
			&\qquad\qquad\quad\lesssim \int_0^{T_{\text{end}}}\|\nabla u\|_{L^2(S_\delta(t))}^2 
			\lesssim \int_0^{T_{\text{end}}}\|\nabla u\|_{B_{2,1}^{1/2}(\Omega^+(t) \cup \Omega^-(t))} \lesssim \delta.
		\end{aligned}
	\end{equation}
This completes the proof of the theorem.
\end{proof}

\section{Numerical experiments} \label{sec4}
In this section, we present several numerical examples to demonstrate the effectiveness and accuracy of the XI-PINN method for moving interface problems, particularly emphasizing its ability to capture solution discontinuities across the interface. In all test cases, we use the hyperbolic tangent activation function $\sigma(x) = \tanh(x)$. To minimize the loss function, we adopt the improved Levenberg--Marquardt (LM) method~\cite{transtrum2012improvements}. {Compared to commonly used first-order optimizers (e.g., gradient descent), second-order methods have been shown to effectively mitigate the ill-conditioned loss landscapes of PINNs~\cite{rathore2024challenges} and alleviate the gradient conflicts arising in multi-objective optimization~\cite{wang2026gradient}. Furthermore, while training PINNs via gradient descent often requires adaptively adjusting the penalty parameters of different residual terms to ensure convergence~\cite{wang2022and, wang2021understanding}.} {Since we adopt a second-order optimizer throughout this work, we simply fix all penalty parameters to \(1\) for all neural network methods, exactly as in the loss function defined in \eqref{eq2.7}. }  

In all the following experiments, the optimization is terminated after \(2000\) epochs. 
We compare the proposed XI-PINN with the Vanilla-PINN~\cite{raissi2019physics} and with X-PINNs~\cite{jagtap2020extended}.  
Vanilla-PINN, like XI-PINN, employs a single neural network to represent the solution in both subdomains, while X-PINNs use two separate networks to approximate the solution in \(\Omega^+(t)\) and \(\Omega^-(t)\), respectively.  
For Vanilla-PINN, the interface jump condition is approximated by a finite difference:
\begin{equation}
	\bigl[ \beta \nabla u \cdot \mathbf{n} \bigr]_{\Gamma(t)}
	\approx \beta^+ \nabla u(\bm{x} + \varepsilon \mathbf{n}, t) \cdot \mathbf{n}
	- \beta^- \nabla u(\bm{x}, t) \cdot \mathbf{n},
\end{equation}
where we take \(\varepsilon = 10^{-3}\).  
For X-PINNs, one only needs to enforce the interface conditions between the two networks (see, e.g., the treatment of elliptic interface problems in~\cite{wu2022inn}).  
When a neural network is used to construct the level set function \(\phi_\theta\) within the XI-PINN framework, we denote the resulting scheme as \(\text{XI-PINN}^*\) and set the threshold \(\delta = 0.2\) in Algorithm~\ref{algorithm 1}. {In the present work, we consider the case where the interface velocity field $\mathcal{V}$ is known; therefore, the level set function $\phi_\theta$ can be constructed independently prior to the training of XI-PINN.}
For a neural network approximation \(u_\theta\), we define the relative errors in the \(L^2(0,T;L^2(\Omega))\) and \(L^2(0,T;H^1(\Omega))\) norms as
\begin{equation*}
	E_{L^2}^{\text{Rel}}(u_\theta)
	= \frac{\|u - u_\theta\|_{L^2(0,T;L^2(\Omega))}}
	{\|u\|_{L^2(0,T;L^2(\Omega))}},
	\qquad
	E_{H^1}^{\text{Rel}}(u_\theta)
	= \frac{\|u - u_\theta\|_{L^2(0,T;H^1(\Omega))}}
	{\|u\|_{L^2(0,T;H^1(\Omega))}}.
\end{equation*}
All experiments were run on an NVIDIA V100-SXM2-32GB GPU.
\subsection{Parabolic equations}
\begin{example}\label{exa1}
	Consider the 2D moving interface problem (\ref{eq1.1})-(\ref{eq1.7}) defined over the spatial domain $\Omega=[-1,1]^2$ and the time interval $[0, 1]$. At time $t = 0$, the interface $\Gamma(0)$ is a circle centered at $(0.3, 0)$, represented by the level set function $\phi_0(x, y) = (x-0.3)^2 + y^2 - (\pi/6)^2$. Driven by a velocity field $\mathcal{V} = \left[ -0.3 \pi \sin(\pi t), 0.3 \pi \cos(\pi t)\right]^T $, the interface rotates about the origin $(0, 0)$. Thus, we can express the evolving level set function as:
	$$
	\phi(x, y, t) = (x - 0.3\cos(\pi t))^2 + (y - 0.3\sin(\pi t))^2 - (\pi/6)^2.
	$$
	The exact solution is as follows:
	\begin{equation*}
		\small
		u(x, y, t) = \left\lbrace 
		\begin{aligned}
			& \dfrac{\left( \left( x - 0.3 \cos(\pi t)\right)^2 + \left( y - 0.3 \sin(\pi t) \right)^2\right) ^{5/2} \left( \pi / 6\right) ^{-1}  }{\beta^+}\\
			&\qquad\qquad \qquad\qquad\qquad\qquad+ \left( \pi / 6\right)^4\left( \dfrac{1}{\beta^-} -  \dfrac{1}{\beta^+}\right) , \quad \mathrm{in}\; \Omega^+(t), \\
			& \dfrac{\left( \left( x - 0.3 \cos(\pi t)\right)^2 + \left( y - 0.3 \sin(\pi t) \right)^2\right) ^{5/2} \left( \pi / 6\right) ^{-1}  }{\beta^-}, \quad  \mathrm{in}\; \Omega^-(t),
		\end{aligned}
		\right. 
	\end{equation*}
	where {$\beta^+ = 10$ and $\beta^- = 0.1$}.
\end{example}

\begin{figure}[htbp] 
	\centering
	\includegraphics[width=0.9\linewidth]{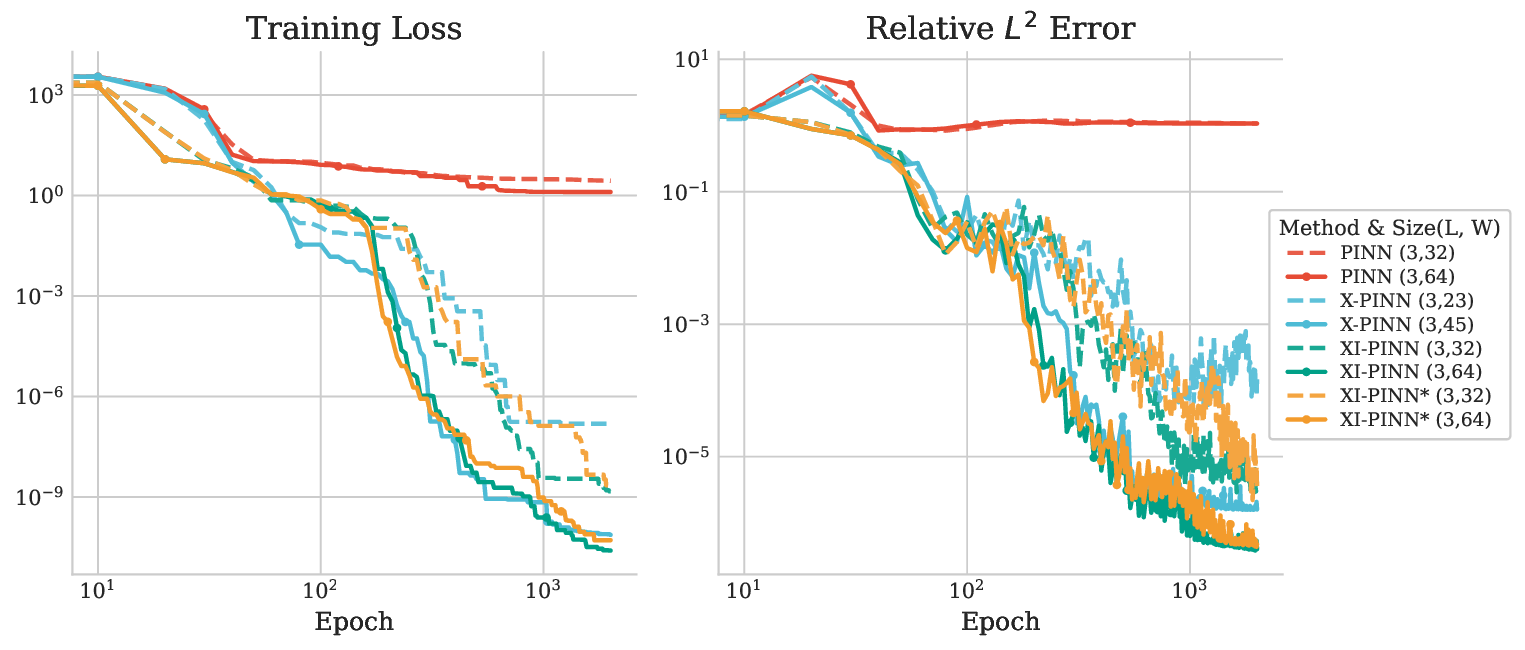}
	\caption{Example~\ref{exa1}: Decay of the loss and the relative $L^2(0,T;L^2(\Omega))$ error during training for different models with various network architectures.}
	\label{fig1}
\end{figure}
\begin{table}[htbp]
	\centering
	\smaller                    
	\setlength{\tabcolsep}{4pt}
	\caption{Performance of different numerical methods in Example~\ref{exa1}. (The network architecture $(L,W)$ refers to the solution network $u_\theta$.)}
	\label{tab1}
	\begin{tabularx}{\textwidth}{c|*{7}{>{\centering\arraybackslash}X}} 
		\toprule
		Method & $L$ & $W$ & $|\theta|_{\ell^0}$ & $\ell$ & $E^{\text{Rel}}_{L^2}$ & $E^{\text{Rel}}_{H^1}$ & Time (s) \\
		\midrule
		\multirow{2}{*}{Vanilla-PINN}
		& 3 & 32 & 1217  & 2.73e+0 & 1.09e+0 & 1.46e+0 & 137.0 \\
		& 3 & 64 & 4481  & 1.27e+0 & 1.09e+0 & 1.42e+0 & 201.5 \\
		\midrule
		\multirow{2}{*}{X-PINN}
		& 3 & 23 & 1336  & 1.54e-07 & 1.48e-04 & 2.14e-04 & 113.2 \\
		& 3 & 45 & 4592  & 7.42e-11 & 1.65e-06 & 1.03e-05 & 286.2 \\
		\midrule
		\multirow{2}{*}{XI-PINN}
		& 3 & 32 & 1249  & 1.49e-09 & 4.06e-06 & 7.46e-06 & 140.4 \\
		& 3 & 64 & 4545  & 2.52e-11 & 4.02e-07 & 1.50e-06 & 265.2 \\
		\midrule
		\multirow{2}{*}{XI-PINN*}
		& 3 & 32 & 1249  & 2.23e-09 & 4.42e-06 & 8.71e-06 & 203.3 \\
		& 3 & 64 & 4545  & 5.14e-11 & 4.81e-07 & 1.63e-06 & 319.6 \\
		\bottomrule
	\end{tabularx}
\end{table}

For this example we conduct a thorough comparison of Vanilla-PINN, X-PINNs, XI-PINN, and $\text{XI-PINN}^*$ using networks of different sizes.
In all the comparative experiments, {the numbers of training points are set to $(N_\Omega, N_{\partial\Omega}, N_{\Omega_0}, N_\Gamma)=(10\text{K}, 3\text{K}, 0.3\text{K}, 2\text{K})$, and we keep the total numbers of parameters of networks of different architectures as close as possible.}
{For $\text{XI-PINN}^*$, the level set function $\phi_\theta$ is constructed by a single fully connected network of architecture $(L,W)=(2,64)$, whose training takes $59.2$ seconds.}
The decay of the loss $\ell$ and the relative $L^2(0,T;L^2(\Omega))$ error during training are shown in Fig.~\ref{fig1}, and the final numerical results are reported in Table~\ref{tab1}.

The results show that Vanilla-PINN fails to yield an accurate solution because it cannot capture discontinuities at the interface. Although the computational costs (measured by training time and including the overhead for training $\widehat{\mathbf{X}}_\theta$ in $\text{XI-PINN}^*$) are similar across the three methods, XI-PINN achieves slightly higher accuracy than X-PINNs. {As shown by the error distributions in Fig.~\ref{fig2} and Fig.~\ref{fig3}, XI-PINN produces a smooth error field without interfacial jumps. This is because its input variables, $(\bm{x}, t, \Phi(\bm{x}, t))$, are constructed to be globally continuous. Furthermore, its single-network architecture shares parameters across subdomains, allowing for adaptive parameter allocation and leading to a more uniform error distribution. In contrast, X-PINNs use independent networks for each subdomain. This multi-network architecture only approximates the interface conditions and does not exactly satisfy continuity, which introduces localized error jumps. Moreover, due to the strongly coupled interface conditions, an error in one network easily propagates into the interior of the other subdomains. This error propagation becomes much more obvious when smaller networks are used (see Fig.~\ref{fig3}).}

\begin{figure}[htbp]
	\centering
	\begin{minipage}{1 \linewidth}
		\centering
		\includegraphics[width=0.9\linewidth]{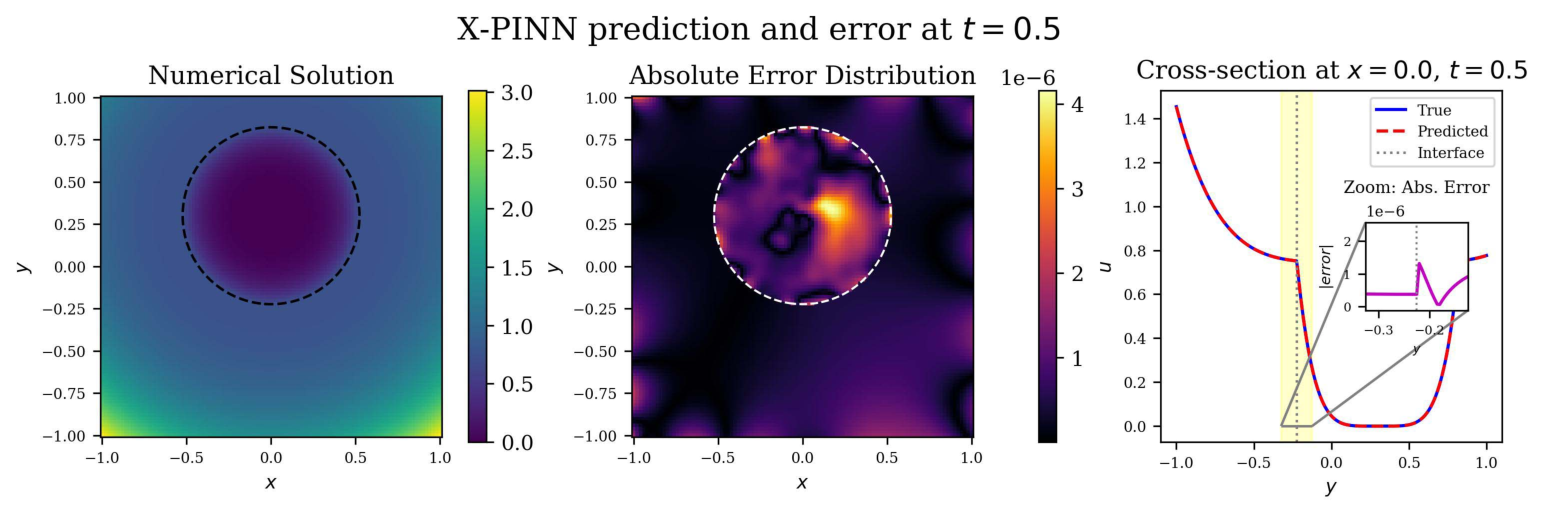}
	\end{minipage}
	\\
	\begin{minipage}{1 \linewidth}
		\centering
		\includegraphics[width=0.9\linewidth]{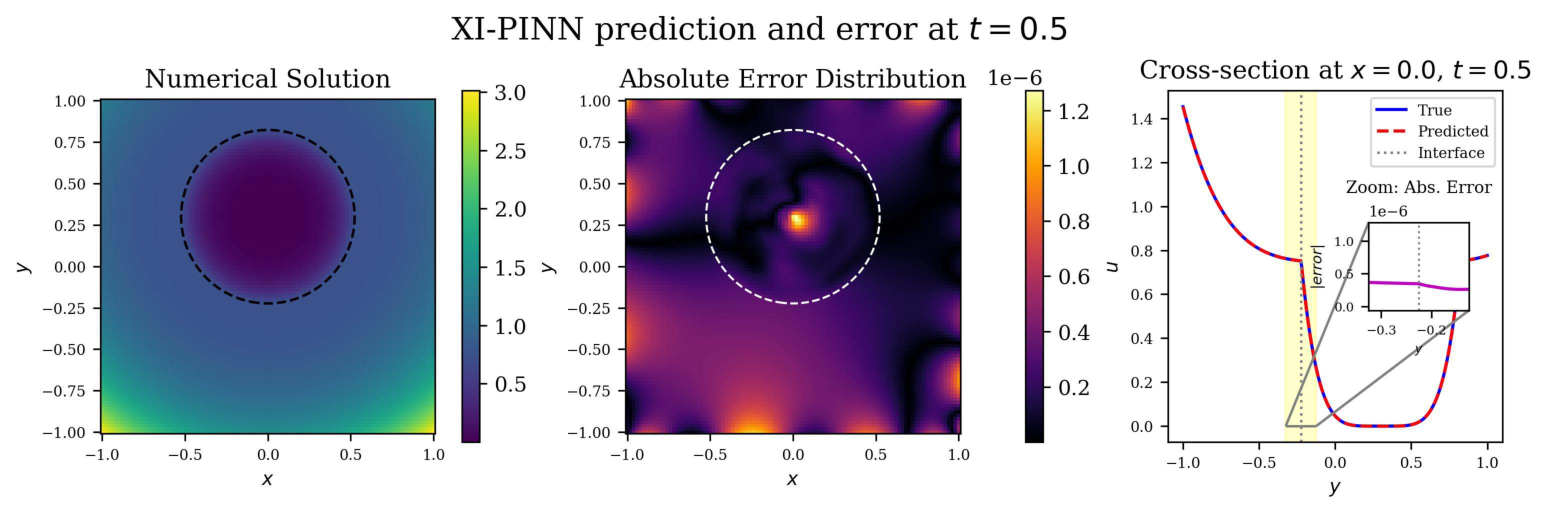}
	\end{minipage}
	\caption{Example~\ref{exa1}: Numerical solutions, absolute error distributions, and profiles along $x=0$ at $t=0.5$ for X-PINN and XI-PINN. The network sizes $(L,W)$ are $(3,45)$ and $(3,64)$, respectively.}
	\label{fig2}
\end{figure}

\begin{figure}[htbp]
	\centering
	\begin{minipage}{1 \linewidth}
		\centering
		\includegraphics[width=0.9\linewidth]{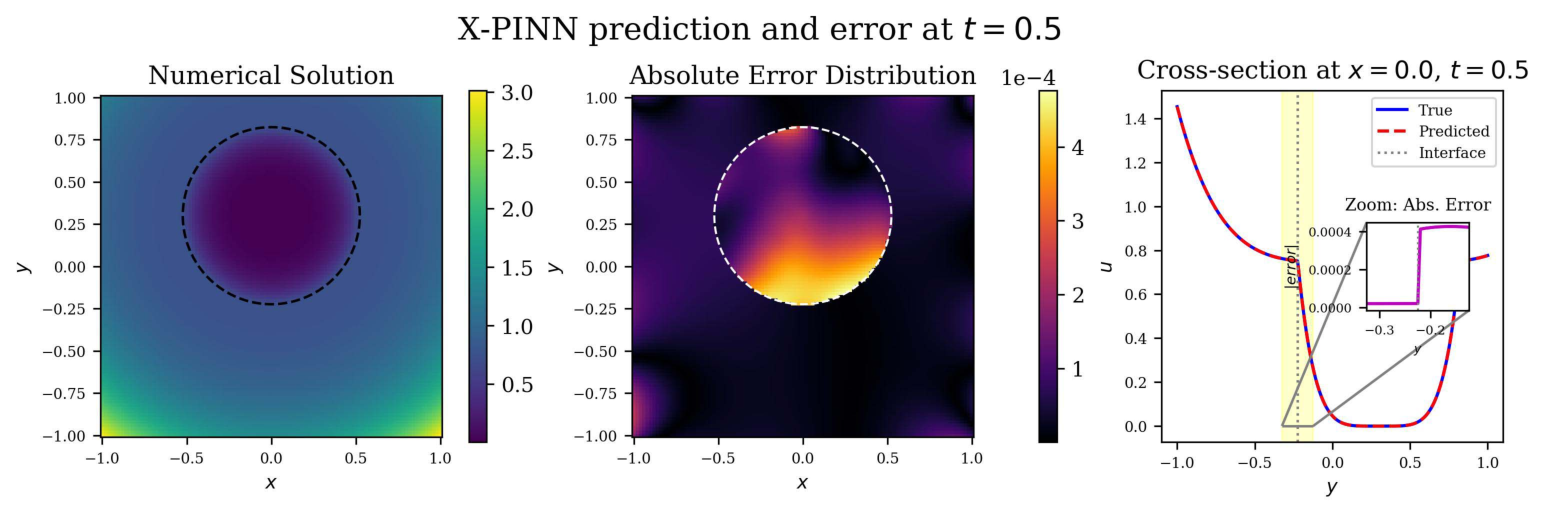}
	\end{minipage}
	\\
	\begin{minipage}{1 \linewidth}
		\centering
		\includegraphics[width=0.9\linewidth]{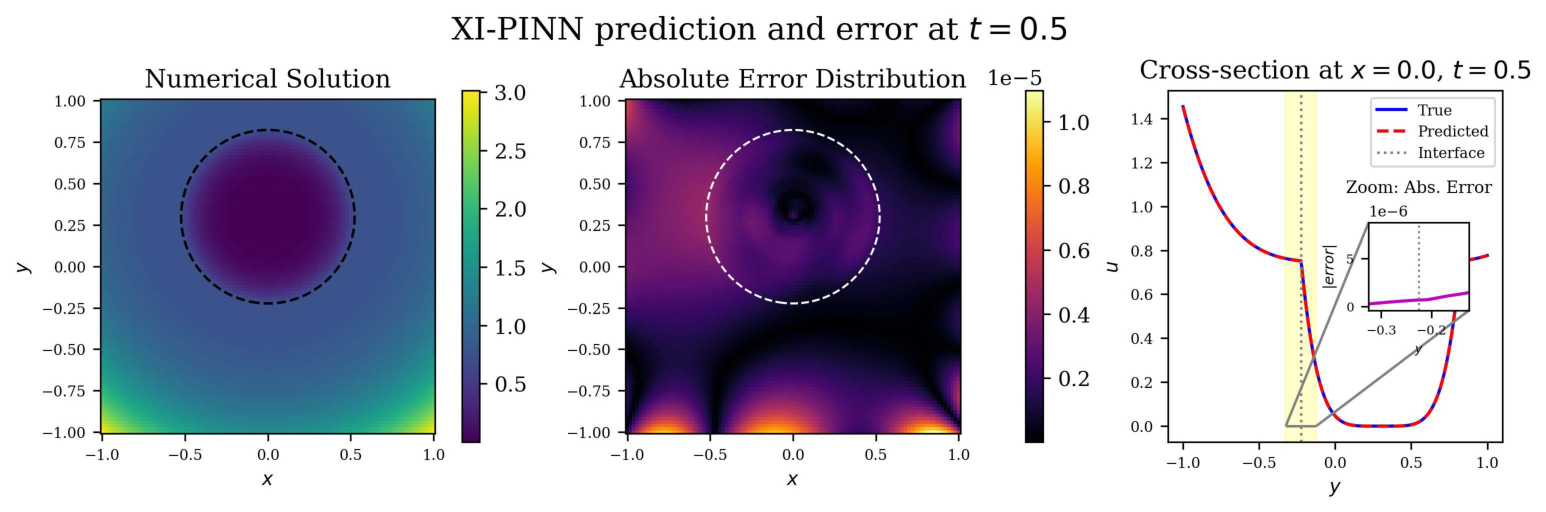}
	\end{minipage}
	\caption{Example~\ref{exa1}: Numerical solutions, absolute error distributions, and profiles along $x=0$ at $t=0.5$ for X-PINN and XI-PINN. The network sizes $(L,W)$ are $(3,23)$ and $(3,32)$, respectively.}
	\label{fig3}
\end{figure}

For $\text{XI-PINN}^*$, the zero level set of the learned function $\phi_\theta$ has a slight geometric deviation from the exact interface. This geometric error causes a small drop in numerical accuracy, which is consistent with the theoretical bounds established in Theorem~\ref{thm3.8}. {Finally, we note a trade-off in the optimization process. While parameter sharing in XI-PINN improves accuracy, it results in a dense Jacobian matrix in the Levenberg-Marquardt (LM) algorithm. Conversely, the independent networks in X-PINNs naturally yield a block-diagonal Jacobian matrix, which provides a more favorable structure for computation.}

\begin{table}[htbp]
	\caption{Example~\ref{exa1}: Relative $L^2(0,1; L^2(\Omega))$ and $L^2(0,1; H^1(\Omega))$ errors for different numbers of training points with network architecture $(L,W)=(3,64)$.}
	\label{tab2}
	\centering
	\small
	\begin{tabular}{c|ccc}
		\toprule
		$(N_\Omega, N_{\partial \Omega}, N_{\Omega_0}, N_{\Gamma})$ & $\ell$ & $E^{\text{Rel}}_{L^2}$ & $E^{\text{Rel}}_{H^1}$ \\
		\midrule
		(10K, 3K, 0.3K, 2K)    & 2.52e-11 & 4.02e-07 & 1.50e-06\\
		(5K, 2K, 0.2K, 1K)     & 1.18e-11 & 8.13e-07 & 4.02e-06\\
		(2K, 1K, 0.1K, 0.5K)   & 1.40e-12 & 1.51e-05 & 8.46e-05\\
		(1K, 0.5K, 0.1K, 0.3K) & 3.02e-13 & 2.86e-04 & 1.08e-03\\
		\bottomrule
	\end{tabular}
\end{table}

For the XI-PINN, we investigate the effect of statistical error on the numerical accuracy by varying the number of training points. 
The numerical results in Table~\ref{tab2} show that, as the number of training points decreases, the approximation error grows even though the loss $\ell$ remains very small. 
This indicates that reducing training points increases the statistical error, which eventually degrades the accuracy---a behavior consistent with the analysis in Section~\ref{sec3}.

Both Vanilla-PINN and XI-PINN are able to enforce continuity across the interface, but Vanilla-PINN fails to capture the normal flux discontinuity and the overall training fails. 
To understand this, we examine the NTK matrices of the two methods. 
For this comparison we use a single-hidden-layer network with $512$ neurons for both models, and set the numbers of training points to $(N_\Omega, N_{\partial\Omega}, N_{\Omega_0}, N_\Gamma)=(1000, 500, 200, 300)$. 
The eigenvalue spectra of the NTK matrices are plotted in Fig.~\ref{fig4}. 
We observe that the eigenvalues of the XI-PINN NTK decay more slowly than those of Vanilla-PINN. 
Since small eigenvalues of the NTK are typically associated with high-frequency components of the solution\cite{jacot2018neural, wang2022and}, XI-PINN is better able to capture the interface discontinuity while still maintaining continuity across the interface. 
We do not compare with X-PINNs here, because X-PINNs deal with a piecewise smooth problem, which is fundamentally different from the setting of Vanilla-PINN and XI-PINN.

\begin{figure}[htbp]
	\centering
	\begin{minipage}{0.25\linewidth}
		\centering
		\includegraphics[width=1\linewidth]{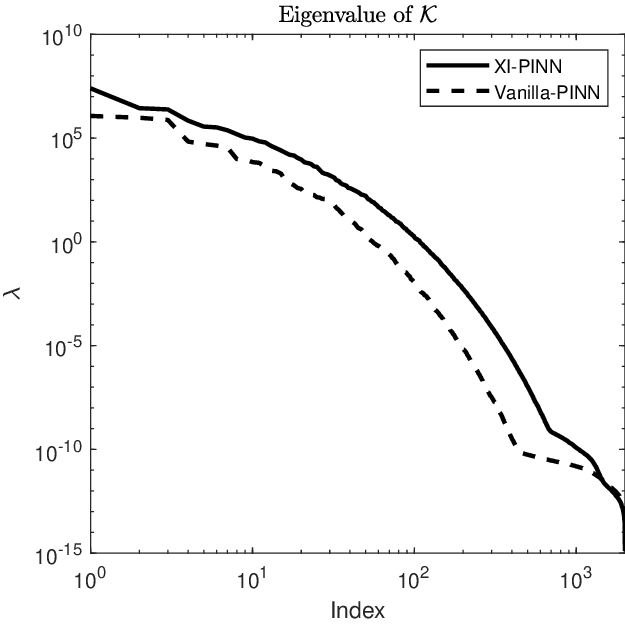}
	\end{minipage}
	\begin{minipage}{0.25\linewidth}
		\centering
		\includegraphics[width=1\linewidth]{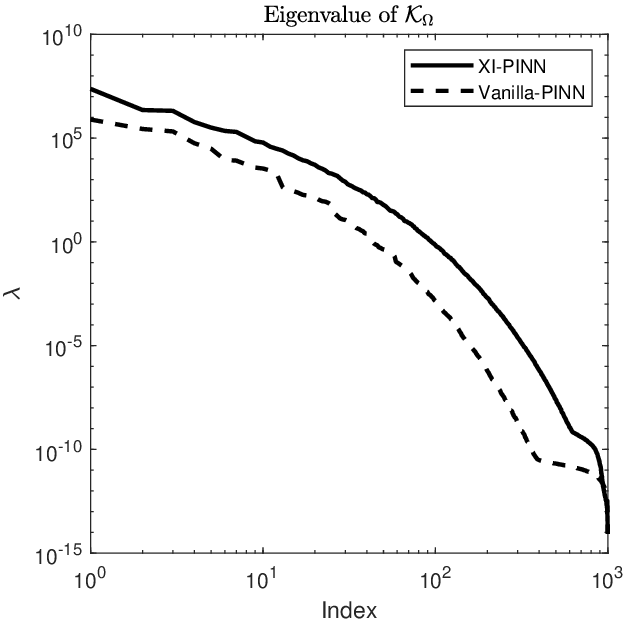}
	\end{minipage}
	\begin{minipage}{0.25\linewidth}
		\centering
		\includegraphics[width=1\linewidth]{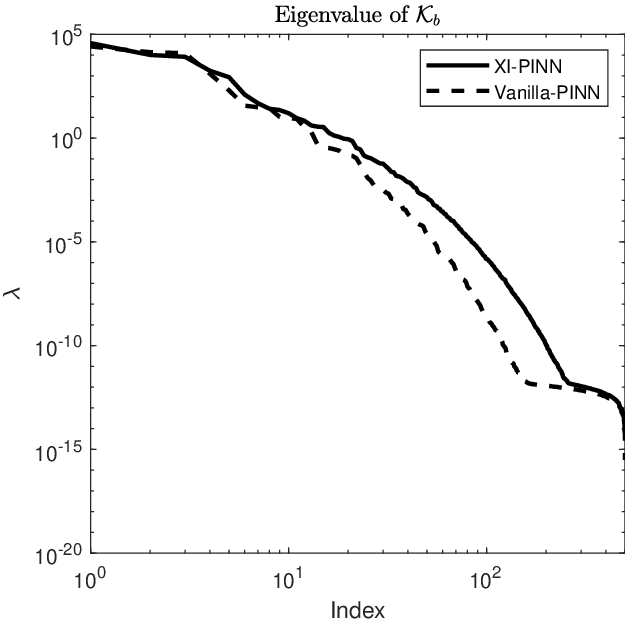}
	\end{minipage}
	\\
	\begin{minipage}{0.25\linewidth}
		\centering
		\includegraphics[width=1\linewidth]{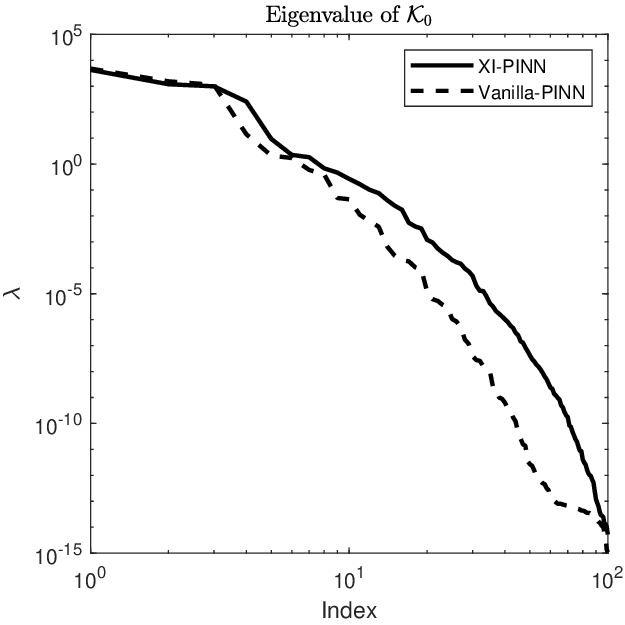}
	\end{minipage}
	\begin{minipage}{0.25\linewidth}
		\centering
		\includegraphics[width=1\linewidth]{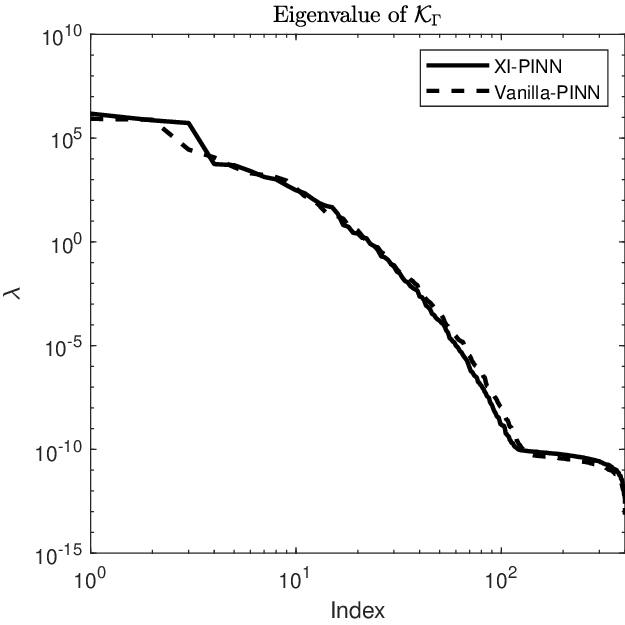}
	\end{minipage}
	\caption{Example~\ref{exa1}: The eigenvalue distribution of the NTK for XI-PINN and Vanilla-PINN.}
	\label{fig4}
\end{figure}

\begin{example} \label{exa2}
	In this example, we consider the moving interface problem (\ref{eq1.1})-(\ref{eq1.7}) defined in 3D space $\Omega = [-1,1]^3$ over the time interval $[0, 1]$. The interface is a rotating and rising ellipsoidal surface. At the initial time $t = 0$, it is represented by the level set function 
	$$\phi_0(x,y,z) = \dfrac{x^2}{0.9^2} + \dfrac{y^2}{0.7^2} + \dfrac{(z + 0.25)^2}{0.5^2} - 1.$$
	
	The motion of the interface can be described by a velocity field $$\mathcal{V} = \left[ -\pi y/2,\pi x /2, 0.5\right] ^T.$$ Therefore, we can obtain the level set function
	\begin{equation*}
		\small
		\begin{split}
			\phi(x,y,z,t) &= \dfrac{\left( x\cos(\pi t /2) + y \sin(\pi t / 2)\right)^2 }{0.9^2} + \dfrac{\left( -x\sin(\pi t /2) + y \cos(\pi t / 2)\right)^2}{0.7^2} \\
			&\qquad\qquad+ \dfrac{\left( z - 0.5t + 0.25\right)^2}{0.5^2} - 1.		
		\end{split}
	\end{equation*}
	The exact solution is as follows:
	\begin{equation*}
		\small
		u(x,y,z,t) = \left\lbrace 
		\begin{aligned}
			&\exp\left[ -\left( x^2 + y^2 + z^2\right) \right] \cos t, \quad &\mathrm{in}\; \Omega^+(t), \\
			&\dfrac{1}{2} \left[ \exp(-t) \exp(z) \sin(\pi x) \sin(\pi y) + 1\right] , \quad  &\mathrm{in}\; \Omega^-(t),
		\end{aligned}
		\right. 
	\end{equation*}
	where $\beta^+ = 10$ and $\beta^- = 1$.
\end{example}

\begin{figure}[htbp]
	\centering
	\label{fig5}
	\includegraphics[width=0.9\linewidth]{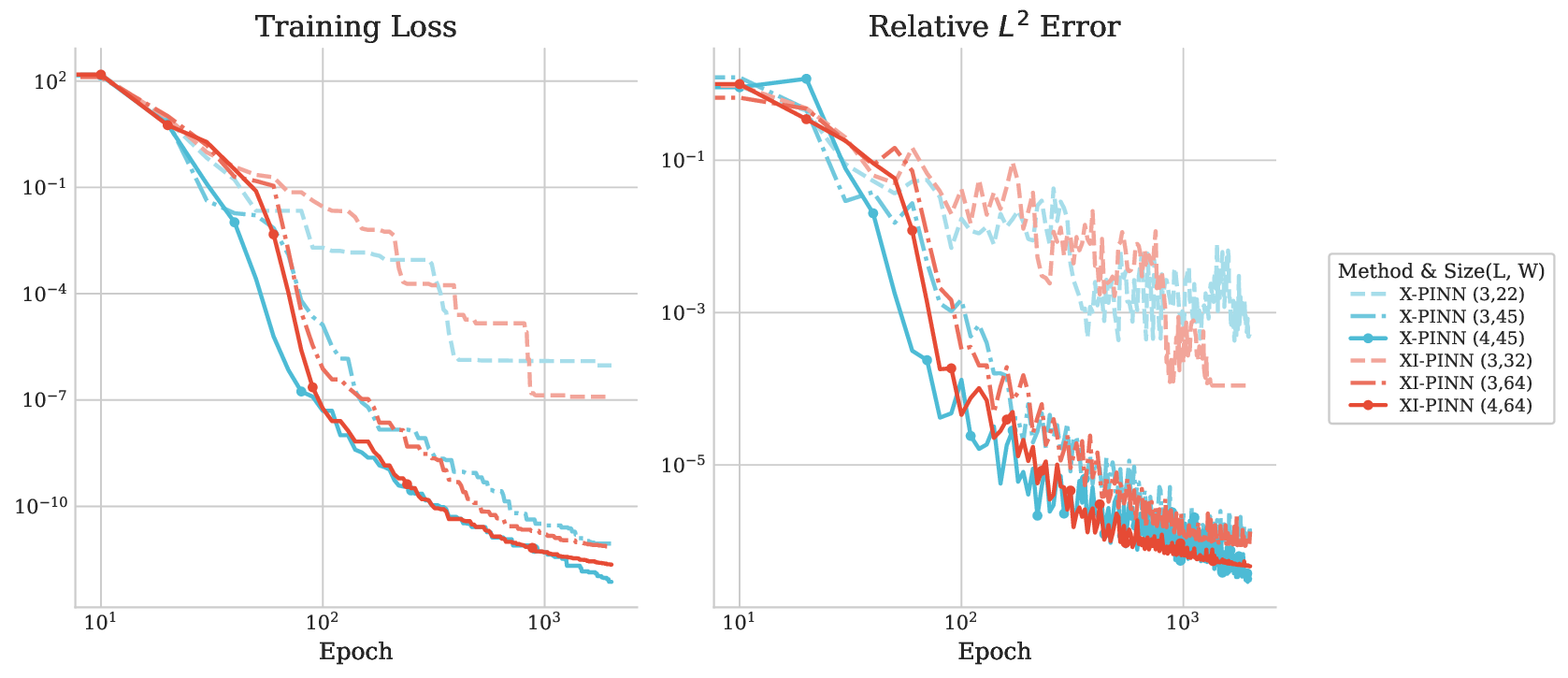}
	\caption{Example~\ref{exa2}: Decay of the loss and the relative $L^2(0,T;L^2(\Omega))$ error during training for different models with various network architectures.}
\end{figure}
\begin{table}[htbp]
	\centering     
	\small        
	\setlength{\tabcolsep}{4pt}
	\caption{Performance of different numerical methods in Example~\ref{exa2}.}
	\label{tab5}
	\begin{tabularx}{\textwidth}{c|*{7}{>{\centering\arraybackslash}X}} 
		\toprule
		Method & $L$ & $W$ & $|\theta|_{\ell^0}$ & $\ell$ & $E^{\text{Rel}}_{L^2}$ & $E^{\text{Rel}}_{H^1}$ & Time (s) \\
		\midrule
		\multirow{3}{*}{X-PINN}
		& 3  & 22 & 1278  & 9.51e-07 & 5.55e-04 & 1.29e-03 & 102.4  \\
		& 3  & 45 & 4592  & 8.91e-12 & 1.16e-06 & 4.96e-06 & 342.7  \\
		& 4  & 45 & 8822  & 7.40e-13 & 3.18e-07 & 1.27e-06 & 1010.0 \\
		\midrule
		\multirow{3}{*}{XI-PINN}
		& 3  & 32 & 1281  & 1.23e-07 & 1.13e-04 & 4.74e-04 & 92.7   \\
		& 3  & 64 & 4545  & 7.01e-12 & 1.32e-06 & 4.49e-06 & 352.3  \\
		& 4  & 64 & 8769  & 2.27e-12 & 4.67e-07 & 2.40e-06 & 1071.9 \\
		\bottomrule
	\end{tabularx}
\end{table}

For problems where the solution is discontinuous across the interface, i.e., $h_D \neq 0$, we choose the indicator function as the extension variable.
In this example we compare XI-PINN with X-PINNs under three different network sizes; the numerical results are shown in Fig.~\ref{fig5} and Table~\ref{tab5}. 
{The numbers of training points are set to $(N_\Omega, N_{\partial\Omega}, N_{\Omega_0}, N_\Gamma) = (20\text{K}, 6\text{K}, 0.6\text{K}, 4\text{K})$.}
{For small network architectures, XI-PINN achieves higher accuracy by sharing parameters across the two subdomains.} Unlike Example~\ref{exa1}, for large networks XI-PINN does not gain an accuracy advantage over X-PINNs. This is because, when each subnetwork of X-PINNs has sufficiently many parameters, it can represent the solution in its own subdomain with high accuracy. In contrast, a pre-assigned small network may suffer from insufficient capacity in one subdomain, which in turn degrades the approximation in the other. Furthermore, when $h_D \neq 0$, XI-PINN no longer benefits from exactly enforcing the zero-jump condition. In such scenarios, both XI-PINN and X-PINNs must learn the interface jump conditions, although XI-PINN uniquely encodes this discontinuity via the extended variable. Consequently, for high-accuracy computations utilizing large networks in the presence of non-homogeneous interface jumps, X-PINNs emerge as a more favorable alternative. The numerical solutions and absolute error distributions on the hyperplane $z=0$ at $t=0.5$ for both methods are given in Fig.~\ref{fig6}. These results also highlight the advantage of neural networks for high-dimensional PDEs; handling such moving interface problems via grid-based space-time finite element methods is a nontrivial task.

\begin{figure}[htbp]
	\centering
	\begin{minipage}{0.49\linewidth}
		\centering
		\includegraphics[width=1\linewidth]{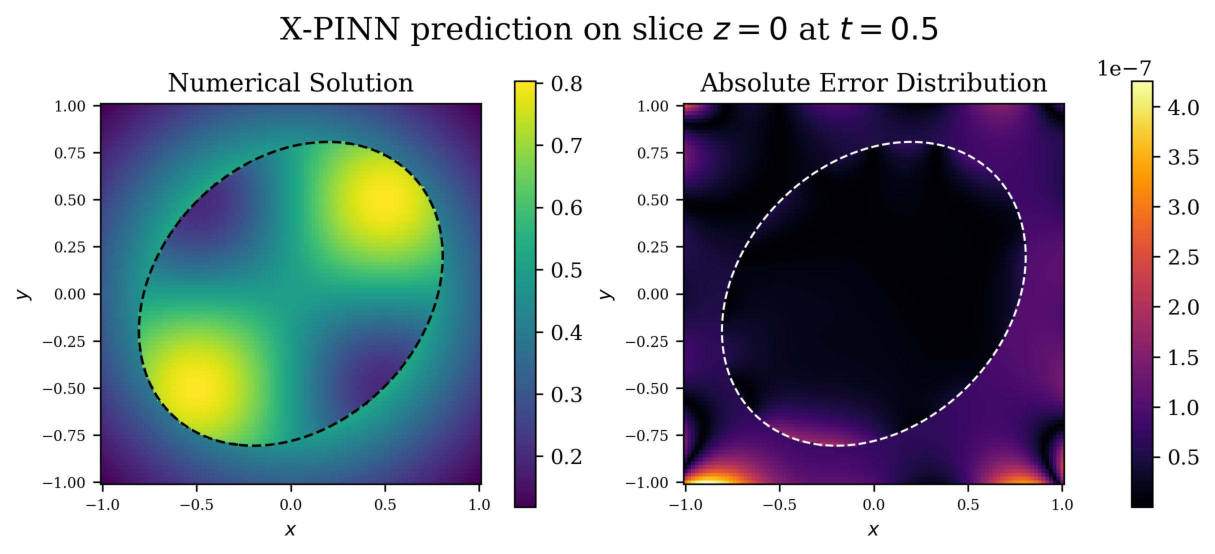}
	\end{minipage}
	\begin{minipage}{0.49\linewidth}
		\centering
		\includegraphics[width=1\linewidth]{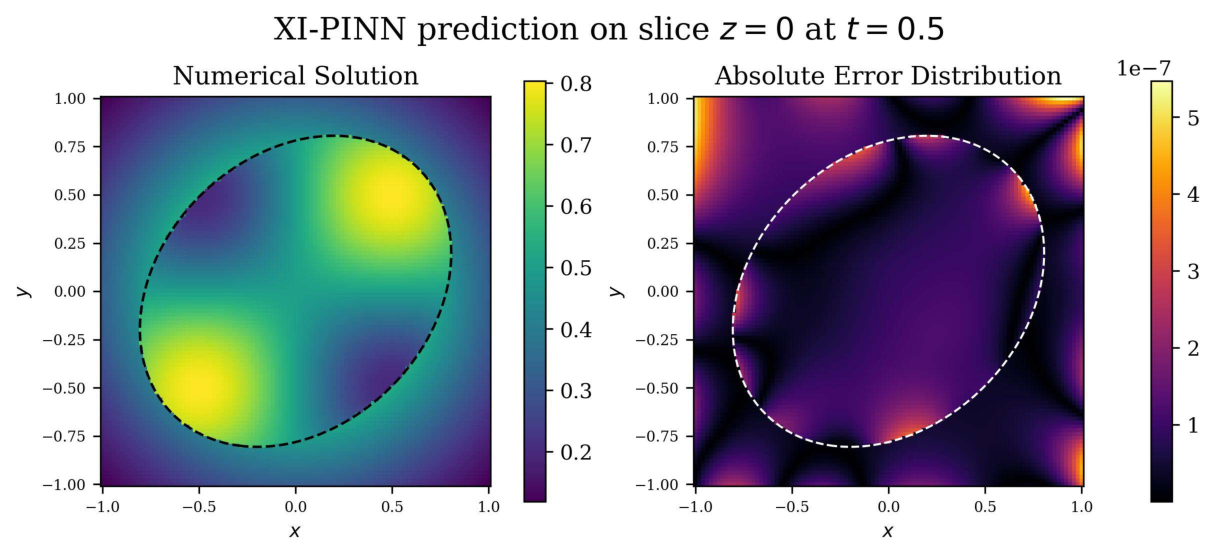}
	\end{minipage}
	\caption{Example~\ref{exa2}: Numerical solutions and absolute error distributions for X-PINN and XI-PINN at $t=0.5$. The network sizes $(L,W)$ are $(4,45)$ and $(4,64)$, respectively.}
	\label{fig6}
\end{figure}

\subsection{Oseen equations}
The XI-PINN method can also be applied to solve more complex time-varying interface problems such as Oseen equations~\cite{ma2023high}. The model is described as follows:
\begin{equation}\label{eq4.1}
	\left\{\begin{aligned}
		\partial_t \bm{u}^{\pm} + (\mathcal{V}\cdot\nabla)\bm{u}^{\pm} - \nu^{\pm} \Delta \bm{u}^{\pm} + \nabla p^{\pm} 
		&= \bm{f}^{\pm}, \qquad \operatorname{div}\bm{u}^{\pm} = 0,
		&& \text{in } \Omega^{\pm}(t), \\
		\bm{u}^{\pm}\big|_{t=0} &= \bm{u}_{0}^{\pm},
		&& \text{in } \Omega^{\pm}(0), \\
		[\nu \partial_{\mathbf{n}}\bm{u} - p \mathbf{n}]_{\Gamma(t)} &= \bm{h}_N, \quad
		[\bm{u}]_{\Gamma(t)} = \bm{h}_D,
		&& \text{on } \Gamma(t),\\
		\bm{u}^{+} &= \bm{g},
		&& \text{on } \partial\Omega,
	\end{aligned}\right.
\end{equation}
where the viscosities $\nu$ is piece-wise positive constants, which is defined as
\begin{equation}\label{eq4.5}
	\nu(\mathbf{x}, t) = 
	\begin{cases}
		\nu^+, & \text{for } \bm{x} \in \Omega^+(t), \\
		\nu^-, & \text{for } \bm{x} \in \Omega^-(t).
	\end{cases}
\end{equation}

\begin{example}\label{exa3}
	Consider the 2D moving interface problem (\ref{eq4.1})--(\ref{eq4.5}) where the domain $\Omega$ is a disk of radius $1$ centered at $(0,0)$.
	The initial interface $\Gamma(0)$ is an irregular smooth curve (see Fig.~\ref{fig8}) and can be represented by the initial level set function
	\begin{equation*}
		\phi_0(x, y) = x^2 + y^2 - 0.4 - \frac{1}{\pi} \sin(\pi x) \cos(\pi y).
	\end{equation*}
	Driven by the velocity field $\mathcal{V} = \left(-y, x\right)^T$, the interface rotates counterclockwise around $(0, 0)$. The final time is set to $T_{\text{end}}=1$.
	Solving equation \eqref{advection equation} by the method of characteristics yields the exact level set function
	\begin{equation}
		\phi(x,y,t) = \xi^2 + \eta^2 - 0.4 - \frac{1}{\pi} \sin(\pi \xi) \cos(\pi \eta),
	\end{equation}
	where
	\begin{equation*}
		\left( \xi, \eta\right) = \left( x\cos(t) + y\sin(t), \, -x\sin(t) + y\cos(t) \right).
	\end{equation*}
	The exact solution $\bm{u} = (u, v)$ is given by
	\begin{equation*}
		\small
		\bm{u}(x, y, t) = 
		\begin{cases}
			\bigl( \exp(x) \sin(y + t),\, \exp(x) \cos(y + t) \bigr)^T + \phi \nabla^{\perp} \phi, & \mathrm{in}\; \Omega^+(t), \\[4pt]
			\bigl( \exp(x) \sin(y + t),\, \exp(x) \cos(y + t) \bigr)^T, & \mathrm{in}\; \Omega^-(t),
		\end{cases}
	\end{equation*}
	where operator $\nabla^{\perp} = (\partial_y, -\partial_x)^T$.  
	The pressure is chosen as
	\begin{equation*}
		p(x, y, t) = 
		\begin{cases}
			\exp(t)\sin(x)\cos(y), & \mathrm{in}\; \Omega^+(t), \\[4pt]
			\cos(t)\sin(x+y),     & \mathrm{in}\; \Omega^-(t),
		\end{cases}
	\end{equation*}
	and the viscosity coefficients are taken as $\nu^+ = 1$ and $\nu^- = 0.1$.
	In this example the interface jump condition satisfies $\bm{h}_D = 0$; that is, the exact velocity $\bm{u}$ is continuous across the interface.
\end{example}

\begin{figure}[htbp]
	\centering
	\includegraphics[width=0.9\linewidth]{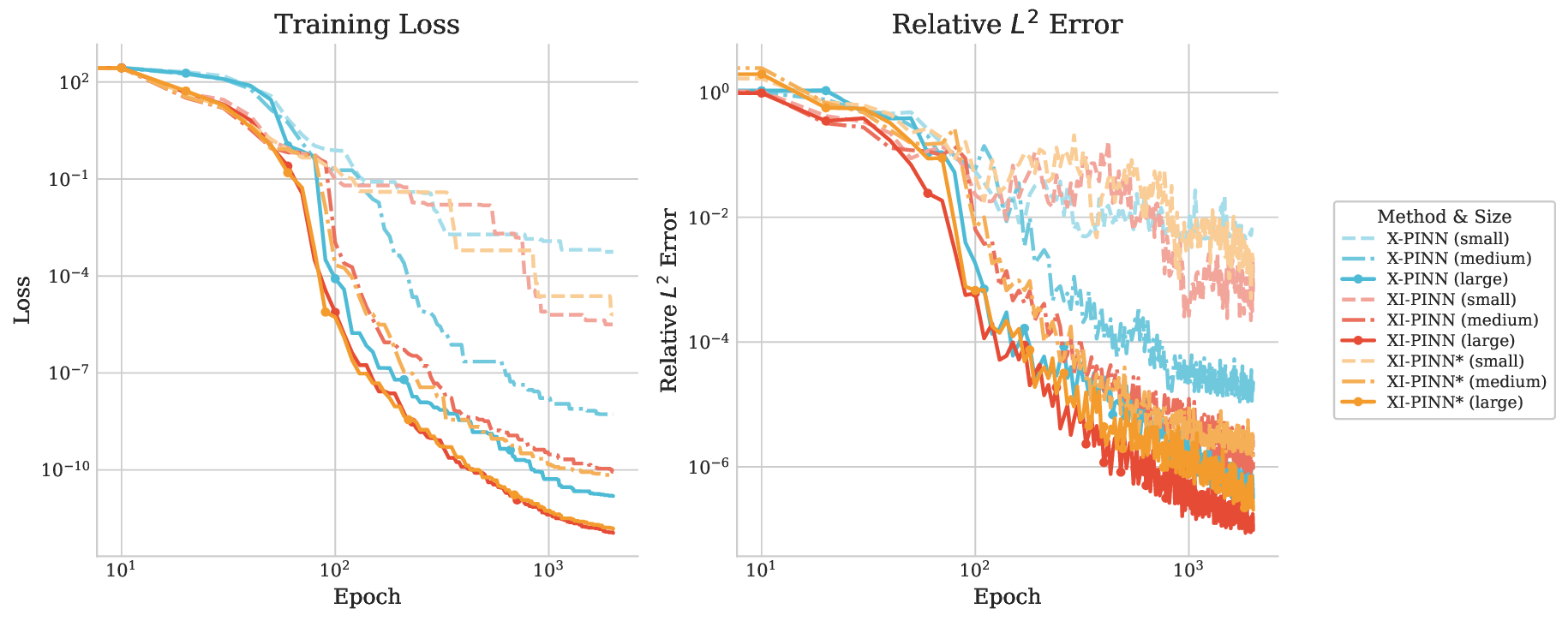}
	\caption{Example~\ref{exa3}: Decay of the loss and the relative $L^2(0,T; L^2(\Omega))$ error of the velocity during training for different models with various network architectures.}
	\label{fig7}
\end{figure}

\begin{table}[htbp]
	\centering
	\smaller                    
	\setlength{\tabcolsep}{4pt}
	\caption{Performance of different numerical methods in Example~\ref{exa3}.}
	\label{tab7}
	\begin{tabularx}{\textwidth}{c|ccc*{5}{>{\centering\arraybackslash}X}} 
		\toprule
		Method & Scale & $|\theta|_{\ell^0}$ & $\ell$ & $E^{\text{Rel}}_{L^2}(u_\theta)$ & $E^{\text{Rel}}_{H^1}(u_\theta)$ & $E^{\text{Rel}}_{L^2}(v_\theta)$ & $E^{\text{Rel}}_{H^1}(v_\theta)$ & Time (s) \\
		\midrule
		\multirow{3}{*}{X-PINN}
		& small  & 2720  & 5.59e-04 & 7.02e-03 & 1.02e-02 & 7.40e-03 & 1.22e-02 & 357.0 \\
		& medium & 6020  & 5.23e-09 & 2.16e-05 & 3.07e-05 & 2.56e-05 & 4.14e-05 & 756.2 \\
		& large  & 11264 & 7.42e-11 & 3.44e-07 & 1.09e-06 & 2.97e-07 & 1.45e-06 & 2065.6 \\
		\midrule
		\multirow{3}{*}{XI-PINN}
		& small  & 2531  & 3.17e-06 & 2.89e-04 & 6.28e-04 & 7.79e-04 & 8.68e-04 & 298.2 \\
		& medium & 5859  & 8.79e-11 & 8.03e-07 & 2.33e-06 & 1.05e-06 & 3.11e-06 & 683.2 \\
		& large  & 11075 & 1.12e-12 & 7.34e-08 & 2.95e-07 & 1.21e-07 & 4.10e-07 & 2023.8 \\
		\midrule
		\multirow{3}{*}{XI-PINN*}
		& small  & 2531  & 6.42e-06 & 1.88e-03 & 8.30e-04 & 4.27e-04 & 7.46e-04 & 358.0 \\
		& medium & 5859  & 6.45e-11 & 8.52e-07 & 2.47e-06 & 8.76e-07 & 3.40e-06 & 741.4 \\
		& large  & 11075 & 1.50e-12 & 1.03e-07 & 3.57e-07 & 1.05e-07 & 4.82e-07 & 2080.4 \\
		\bottomrule
	\end{tabularx}
\end{table}

\begin{table}[htbp]
	\centering
	\footnotesize          
	\renewcommand{\arraystretch}{0.85} 
	\setlength{\tabcolsep}{3.5pt}       
	\caption{Network architectures for Example~\ref{exa3}: $L_{\bm{u}}$, $W_{\bm{u}}$, $L_p$, $W_p$ denote layers and widths for velocity and pressure. The network architecture of $\text{XI-PINN}^*$ is identical to that of XI-PINN.}
	\label{tab8}
	\begin{tabular}{@{}c|c|cccc@{}}
		\toprule
		Method & Scale & $L_{\bm{u}}$ & $W_{\bm{u}}$ & $L_p$ & $W_p$ \\
		\midrule
		\multirow{3}{*}{X-PINN}
		& small  & 3 & 23 & 3 & 23  \\
		& medium & 3 & 45 & 3 & 23  \\
		& large  & 4 & 45 & 4 & 23  \\
		\midrule
		\multirow{3}{*}{XI-PINN}
		& small  & 3 & 32 & 3 & 32  \\
		& medium & 3 & 64 & 3 & 32  \\
		& large  & 4 & 64 & 4 & 32  \\
		\bottomrule
	\end{tabular}
\end{table}

\begin{figure}[htbp]
	\centering
	\begin{minipage}{1 \linewidth}
		\centering
		\includegraphics[width=0.9\linewidth]{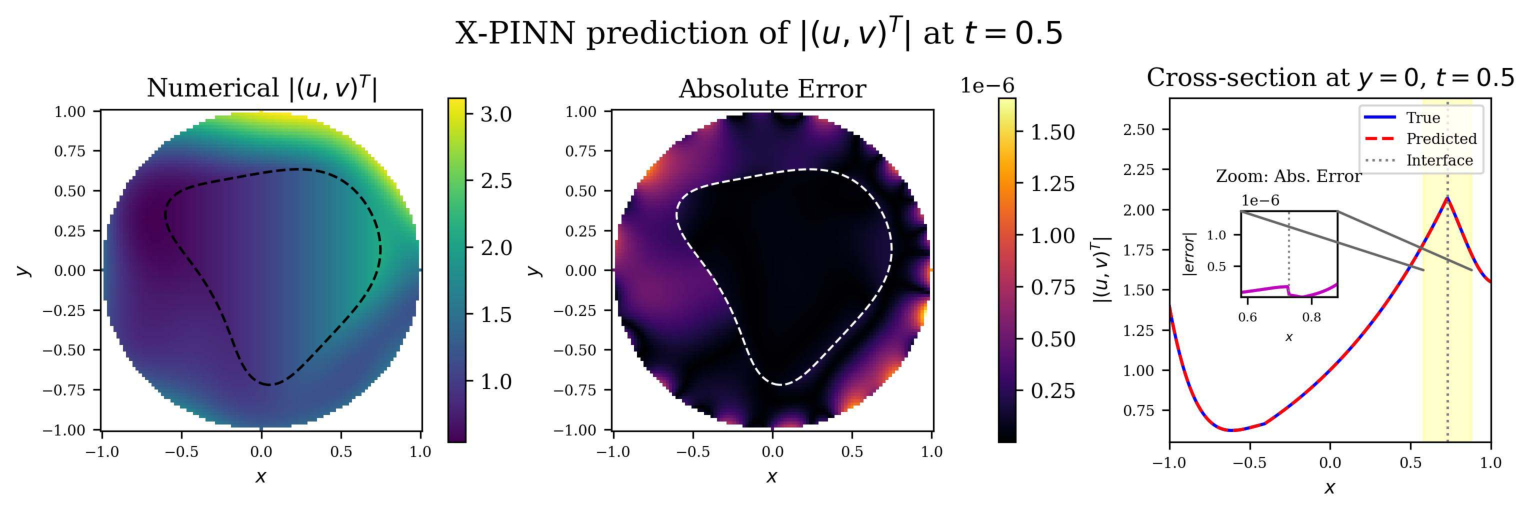}
	\end{minipage}
	\\
	\begin{minipage}{1 \linewidth}
		\centering
		\includegraphics[width=0.9\linewidth]{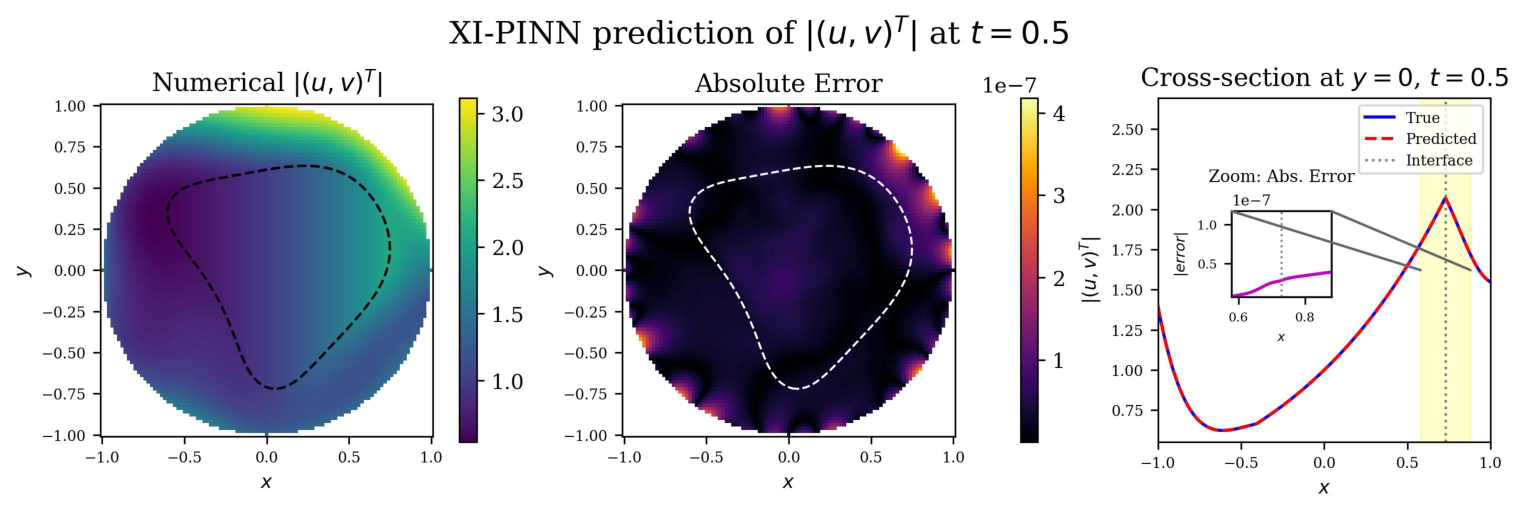}
	\end{minipage}
	\caption{Example~\ref{exa3}: Numerical solution and absolute error distribution for $\left|\bm{u}\right| $ at $t=0.5$.}
	\label{fig8}
\end{figure}

In this example, since the pressure $p$ is discontinuous across the interface, XI-PINN requires two separate neural networks to approximate the velocity $\bm{u}$ and the pressure $p$, respectively.
For the network approximating the velocity, the extension variable is $z = \Phi(\bm{x},t)$, while for the network approximating the pressure we take $z = \chi(\bm{x},t)$.
To ensure a fair comparison, X-PINNs also use different networks for the velocity and for the pressure; hence both methods involve a total of four neural networks. In both XI-PINNs and X-PINNs, the neural network approximating $\bm{u}$ yields two outputs, corresponding to the vector components $(u,v)$.
{In this experiment the numbers of training points are set to $(N_\Omega, N_{\partial\Omega}, N_{\Omega_0}, N_\Gamma) = (10\text{K}, 3\text{K}, 0.3\text{K}, 2\text{K})$.}
{As before, $\text{XI-PINN}^*$ refers to the variant where the level set function is constructed by a neural network $\phi_\theta$; this network has architecture $(L,W)=(2,64)$ and its training takes $59.7$ seconds.}

The decay of the loss and the relative errors during training for the different methods are shown in Fig.~\ref{fig7}, and the final numerical results together with the training times are reported in Table~\ref{tab7}. 
The detailed network architectures are listed in Table~\ref{tab8}.
The numerical behavior is similar to that observed in the scalar case (Example~\ref{exa1}): XI-PINN enforces continuity at the interface by construction and therefore achieves better accuracy than X-PINNs.
From the error distributions at $t=0.5$ (Fig.~\ref{fig8}) we can see that the error of XI-PINN is more uniformly distributed, whereas X-PINNs exhibit noticeable discrepancies between the interior and exterior of the interface.
The cross‑section plots further indicate that such differences may be caused by the error accumulated at the interface.
XI-PINN accurately captures the discontinuity in the normal derivative of $\bm{u}$ across the interface, and its advantage becomes more pronounced when the size of network is small.

\begin{example} \label{exa4}
	In this example, we consider the 2D moving interface problem~(\ref{eq4.1})-(\ref{eq4.5}) involving large deformations of the interface geometry. The domain is given by $\Omega = [0,1 ]^2$. The initial interface $\Gamma(0)$ is the disk whose radius is $0.15$ and its center is $(0.5, 0.75)$ which is determined by the level set function $\phi_0\left( x_1, x_2\right) = x_1^2 + x_2^2 - 0.15^2$. The flow velocity which drives the motion of interface is given by
	$$
	\mathcal{V} = \cos\left( \pi t/3\right) \left( \sin^2\left( \pi x\right)  \sin\left( 2 \pi y\right) , -\sin^2\left( \pi y\right) \sin\left( 2 \pi x\right) \right) ^T.
	$$
	At the finial time $T_{\text{end}}=1$, $\Omega^-(T_{\text{end}})$ is stretched into a snake-shape domain.
	The true solution is set by
	\begin{equation*}
		u(x, y, t) = \left\lbrace 
		\begin{aligned}
			& \left( \exp(x) \sin \left( \pi y + \pi t\right) , \pi^{-1} \exp(x) \cos\left( \pi y + \pi t\right) \right)^T, \quad &\mathrm{in}\; \Omega^+(t), \\
			& \left( \cos\left( \pi x\right) \sin \left( \pi y\right) , -\sin\left( \pi x\right) \cos\left( \pi y\right) \right)^T \cos t, \quad  &\mathrm{in}\; \Omega^-(t),
		\end{aligned}
		\right. 
	\end{equation*}
	\begin{equation*}
		p(x, y, t) = \left\lbrace 
		\begin{aligned}
			& \sin\left( 0.5 \pi x\right) \cos \left( 0.5 \pi y\right) , \quad &\mathrm{in}\; \Omega^+(t), \\
			& \cos\left( 0.5 \pi x\right) \sin \left( 0.5 \pi y\right) , \quad  &\mathrm{in}\; \Omega^-(t),
		\end{aligned}
		\right. 
	\end{equation*}
	and the coefficients of viscosity are taken as $\nu^+=10^{-3}$ and $\nu^- = 1$.
\end{example}

\begin{figure}[htbp]
	\centering
	\includegraphics[width=0.9\linewidth]{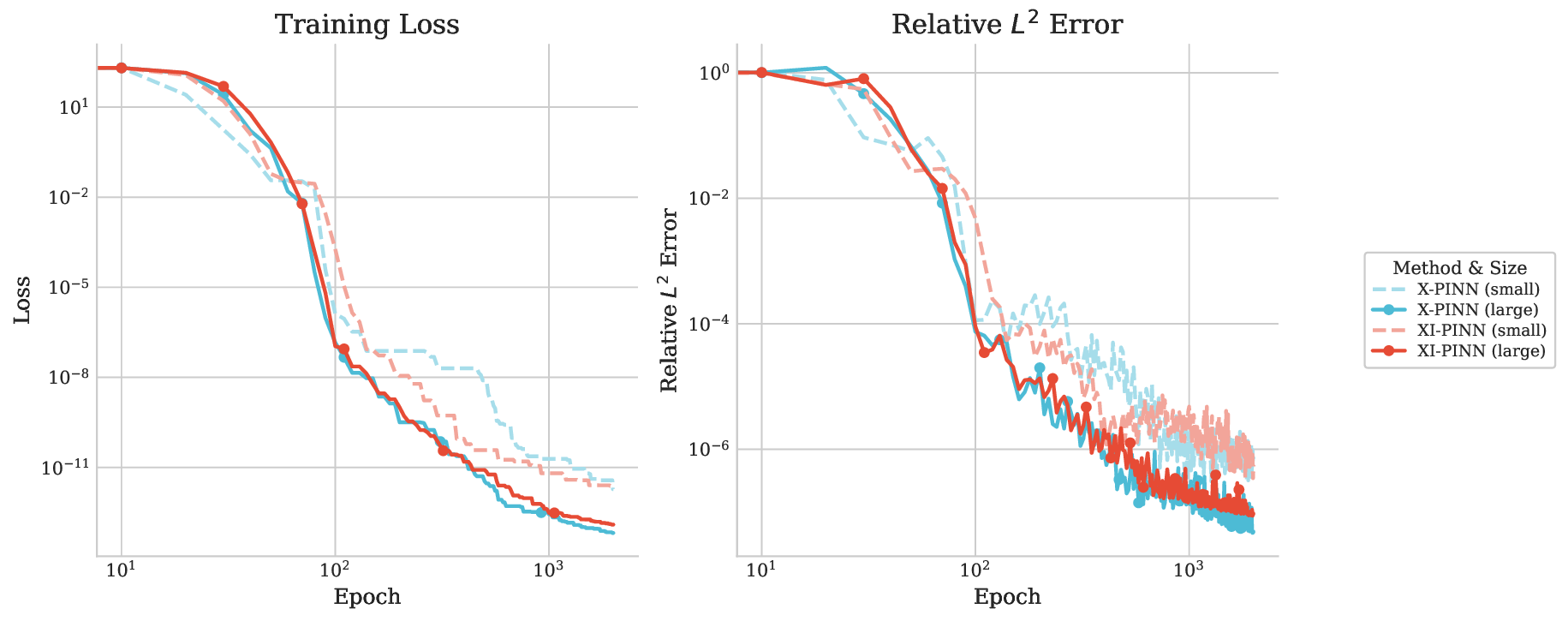}
	\caption{Loss and error curves for Example~\ref{exa4}.}
	\label{fig10}
\end{figure}

\begin{table}[htbp]
	\centering
	\smaller                    
	\setlength{\tabcolsep}{4pt}
	\caption{Performance of different numerical methods in Example~\ref{exa4}.}
	\label{tab9}
	\begin{tabularx}{\textwidth}{c|ccc*{5}{>{\centering\arraybackslash}X}} 
		\toprule
		Method & Scale & $|\theta|_{\ell^0}$ & $\ell$ & $E^{\text{Rel}}_{L^2}(u)$ & $E^{\text{Rel}}_{H^1}(u)$ & $E^{\text{Rel}}_{L^2}(v)$ & $E^{\text{Rel}}_{H^1}(v)$ & Time (s) \\
		\midrule
		\multirow{2}{*}{X-PINN}
		& small & 1326  & 1.90e-12 & 6.41e-07 & 2.59e-07 & 1.42e-06 & 5.85e-07 & 279.8 \\
		& large & 4776  & 6.62e-14 & 3.90e-08 & 1.43e-07 & 9.90e-08 & 4.10e-07 & 571.0 \\
		\midrule
		\multirow{2}{*}{XI-PINN}
		& small & 1315  & 2.45e-12 & 5.44e-07 & 7.09e-07 & 6.95e-07 & 1.72e-06 & 271.5 \\
		& large & 4675  & 1.25e-13 & 7.83e-08 & 2.53e-07 & 1.79e-07 & 6.62e-07 & 578.8 \\
		\bottomrule
	\end{tabularx}
\end{table}
We use this example to illustrate the advantages of neural networks for moving interface problems with large deformations.  
Since both the velocity $\bm{u}$ and the pressure $p$ are discontinuous across the interface, for XI-PINN a single network suffices, with outputs $(u,v,p)$.  
For X-PINNs, two networks are needed, one for each subdomain, to approximate the velocity and the pressure.  
{As an analytical level set function is not available in this example, we construct $\phi_\theta$ using Algorithm~\ref{algorithm 1}.  
Four fully connected networks with architecture $(L,W)=(4,32)$ are employed to build the composite inverse mapping $\widehat{\mathbf{X}}_\theta$; training these four networks takes $366.4$ seconds in total.} For the X-PINN framework, we similarly utilize the learned level set function $\phi_\theta$ to classify the collocation points within the global domain $\Omega$.
{The numbers of training points used for the PDE solution are the same as in Example~\ref{exa3}.}

The numerical results are presented in Fig.~\ref{fig10} and Table~\ref{tab9}.  
{Here XI-PINN is tested with two network sizes: $(L,W)=(3,32)$ and $(3,64)$; for X-PINNs we use $(3,23)$ and $(3,45)$.}
With networks of comparable size, both models achieve high accuracy and the numerical results show no significant difference.  
Because the exact solution $(\bm{u},p)$ is fairly smooth, both methods reach relative errors on the order of $10^{-7}$ even with small networks containing just around $1300$ parameters.  
The numerical solutions and error distributions of $\left| \bm{u}\right| $ at different times are shown in Fig.~\ref{fig11}.  
These results demonstrate that PINN-based methods can handle moving interface problems with large deformations in a simple and effective manner, fully exploiting the advantages of mesh-free approaches.

\begin{figure}[htbp]
	\centering
	\begin{minipage}{1 \linewidth}
		\centering
		\includegraphics[width=1\linewidth]{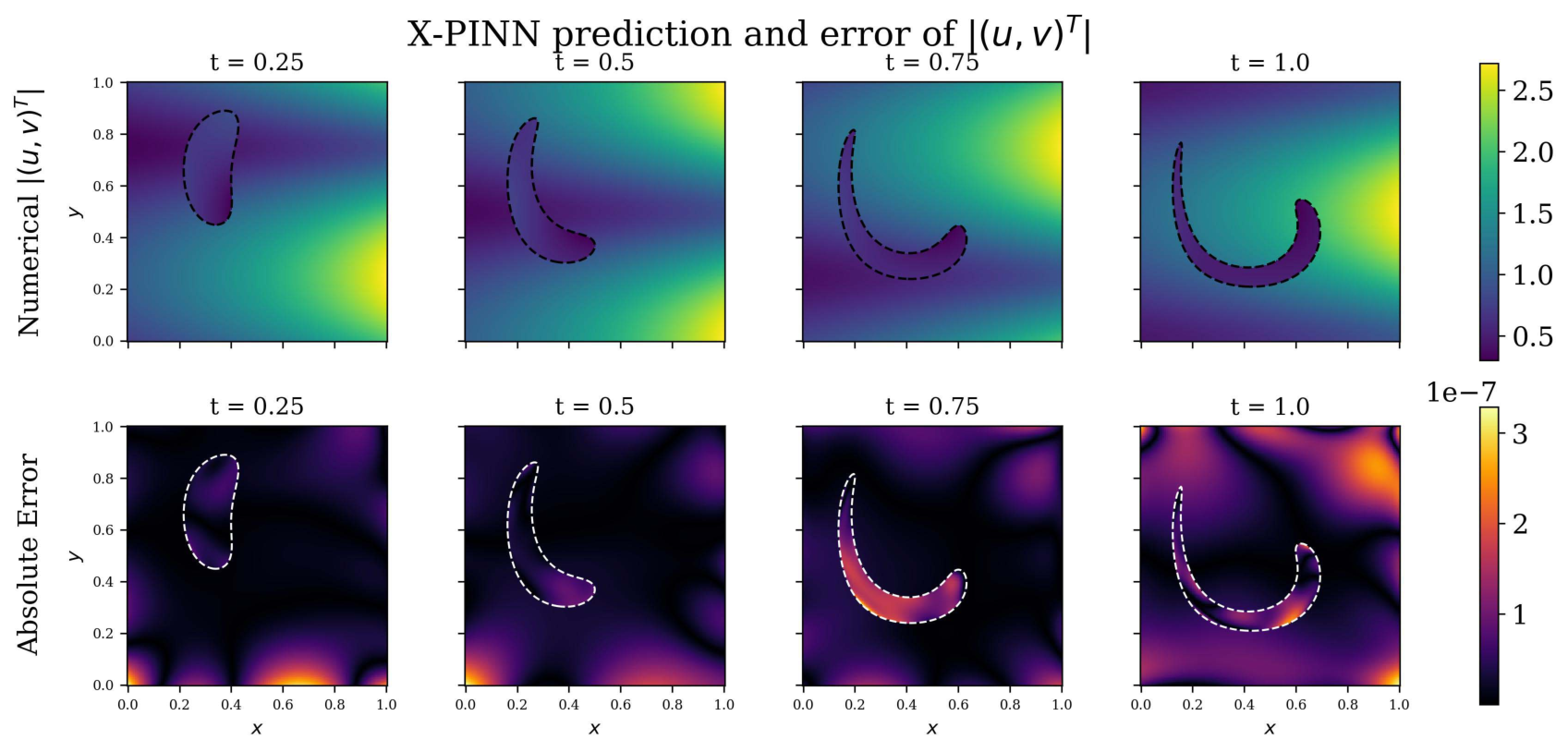}
	\end{minipage}
	\\
	\begin{minipage}{1 \linewidth}
		\centering
		\includegraphics[width=1\linewidth]{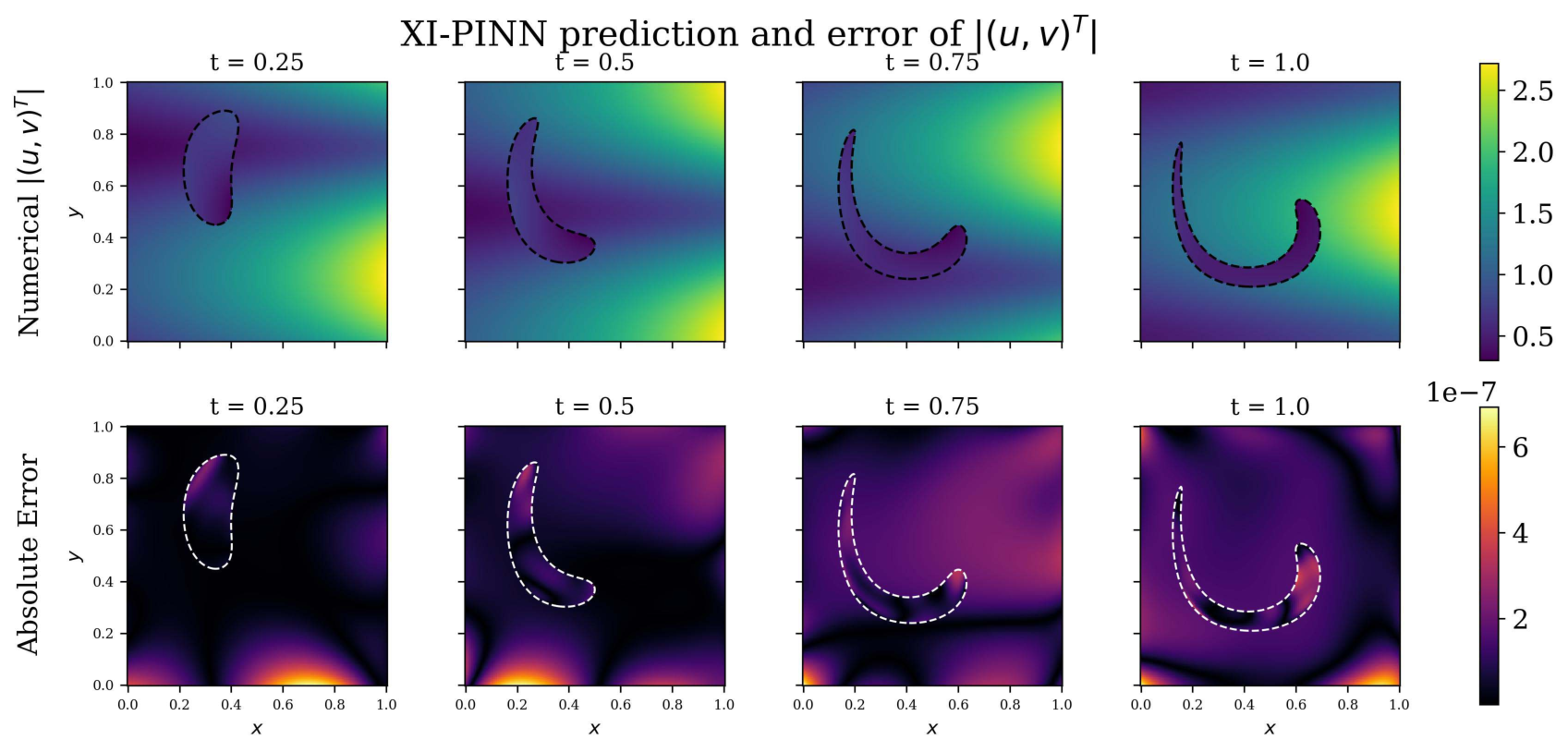}
	\end{minipage}
	\caption{Example~\ref{exa4}: Numerical solution and error distribution for $\left|\bm{u} \right| $ at different times.}
	\label{fig11}
\end{figure}

\section{Conclusion}\label{sec5}
{
In this paper, we have presented the XI‑PINN framework for solving parabolic moving interface problems. The method employs a single neural network combined with an extended variable technique driven by a level set function, allowing it to capture the evolving interface and the associated jump conditions without requiring separate networks for each subdomain or body‑fitted mesh generation. By analyzing the error structure, we have decomposed the total error into approximation, statistical, and optimization components, and provided an a priori error estimate that confirms the method’s reliability under appropriate regularity assumptions.
﻿

Extensive numerical experiments, including two‑ and three‑dimensional examples and the Oseen equations, demonstrate the accuracy and robustness of XI‑PINN. The method consistently outperforms both the standard Vanilla‑PINN, which fails to resolve interface discontinuities, and the multi‑network X‑PINN when the solution is continuous across the interface or when network capacities are limited. The single‑network design enables adaptive parameter sharing, leading to a more uniform error distribution and a favorable spectral behavior of the neural tangent kernel, which alleviates the spectral bias commonly observed in physics‑informed neural networks. Moreover, the time‑adaptive neural‑network construction of the level set function successfully handles large interface deformations while preserving topological validity.
﻿

While the present work focuses on moving interface problems with prescribed velocity fields, promising directions for future research include the extension to fully coupled multiphysics problems, such as two‑phase incompressible Navier–Stokes flows \cite{hysing2009quantitative}, where the interface motion is intrinsically linked to the solution fields. In such settings, XI‑PINN’s ability to naturally enforce velocity continuity across the interface will be particularly advantageous, but it also calls for the development of stable and efficient iterative algorithms that alternately update the PDE solution and the evolving interface representation.
}

\bibliographystyle{siamplain}
\bibliography{references}
\end{document}